\theoremstyle{plain}
\newtheorem{theorem}{Theorem}[section]
\newtheorem{proposition}[theorem]{Proposition}
\theoremstyle{definition}
\newtheorem{remark}[theorem]{Remark}
\numberwithin{equation}{section}
\def\R{{\mathbb R}}
\def\N{{\mathbb N}}
\renewcommand{\leq}{\leqslant}
\renewcommand{\geq}{\geqslant}
\newcommand{\pt}{\partial_t}
\newcommand{\ptt}{\partial_t^2}
\newcommand{\pnu}{\partial_\nu}
\def \d {\mathrm{d}}
\title[Inverse source problem of the wave] 
      {Lipschitz stability for an inverse source problem of the wave equation with kinetic boundary conditions}
\author{Salah-Eddine Chorfi$^1$}
\author{Ghita El Guermai$^1$}
\author{Lahcen Maniar$^1$}
\author{Walid Zouhair$^2$}
\address[1]{Cadi Ayyad University, UCA, Faculty of Sciences Semlalia, Laboratory of Mathematics, Modeling and Automatic Systems, B.P. 2390, Marrakesh, Morocco, (s.chorfi@uca.ac.ma), (ghita.el.guermai@gmail.com), (maniar@uca.ac.ma)}
\address[2]{Department of Mathematics, Faculty of Applied Sciences, Ibn Zohr University, Azrou, B.P. 6146, Ait Melloul, Morocco, (walid.zouhair.fssm@gmail.com)}
\begin{document}
\begin{abstract}
In this paper, we present a refined approach to establish a global Lipschitz stability for an inverse source problem concerning the determination of forcing terms in the wave equation with mixed boundary conditions. It consists of boundary conditions incorporating a dynamic boundary condition and Dirichlet boundary condition on disjoint subsets of the boundary. The primary contribution of this article is the rigorous derivation of a sharp Carleman estimate for the wave system with a dynamic boundary condition. In particular, our findings complete and drastically improve the earlier results established by Gal and Tebou [SIAM J. Control Optim., 55 (2017), 324--364]. This is achieved by using a different weight function to overcome some relevant difficulties. As for the stability proof, we extend to dynamic boundary conditions a recent argument avoiding cut-off functions. Finally, we also show that our developed Carleman estimate yields a sharp boundary controllability result.

\bigskip
\noindent\textsc{Keywords.} Inverse problem, wave equation, dynamic boundary condition, stability estimate, Carleman estimate.

\bigskip
\noindent\textsc{MSC (2020).} 35R30, 74J25, 35L05, 93B05, 93B07.
\end{abstract}

\dedicatory{\large Dedicated to the memory of Professor Hammadi Bouslous}

\maketitle

\section{Introduction}
The wave equation $\partial^2_t y(x,t) - d \Delta y(x,t)=0$ is a hyperbolic Partial Differential Equation (PDE) that models the propagation of waves in a medium $\Omega$ and their positions in space $x$ over time $t$, where $\sqrt{d}$ ($d>0$) represents the wave velocity in $\Omega$. This mathematical model is employed across diverse disciplines to represent various wave phenomena, including vibrating structures, electromagnetic waves, acoustic waves, etc. Solving the wave equation involves specifying initial and boundary conditions to accurately depict the behavior of the wave over space and time. The equations of motion for a given wave can be derived from the principle of stationary action. Classical boundary conditions (Dirichlet, Neumann, or Robin) neglect the momentum of the wave on the boundary $\Gamma:=\partial\Omega$ of the physical domain. Taking into account this momentum and the bulk-surface interaction in the medium gives rise to dynamic boundary conditions of the form $\ptt y_{\mid_{\Gamma}}-\delta\Delta_\Gamma y_{\mid_{\Gamma}}=-d\pnu y,$
where $\sqrt{\delta}$ ($\delta>0$) represents the wave velocity on $\Gamma$, $y_{\mid_{\Gamma}}$ is the trace of $y$ on $\Gamma$, $\Delta_\Gamma$ is the Laplace-Beltrami operator, and $\pnu y$ is the normal derivative of $y$ with respect to the outer unit normal vector field $\nu$. These boundary conditions are also known as nonlocally reacting kinetic conditions, as they arise from a kinetic energy function with terms on the boundary $\Gamma$ (potential energy could also be considered). The conormal derivative term $-d\pnu y$ acts as a source on $\Gamma$. For more details, we refer to \cite{H17}.

Let $T>0$ be a fixed time and $\Omega\subset \R^n$ a bounded domain ($n\in \N$) with smooth boundary $\Gamma$ such that $\Gamma=\Gamma^0\cup \Gamma^1$, with $\Gamma^0$ and $\Gamma^1$ are two closed subsets and $\Gamma^0\cap \Gamma^1=\varnothing$. We consider the wave equation with dynamic/Dirichlet (kinetic) boundary conditions
\begin{align}
	\label{intro:problem:01}
	\begin{cases}
		\ptt y-d\Delta y + q_{\Omega}(x) y=f(x,t), &\text{ in } \Omega\times(0,T),\\
		\ptt y_\Gamma -\delta \Delta_\Gamma y_\Gamma + d\pnu y + q_{\Gamma}(x) y_\Gamma=g(x,t), \quad y_\Gamma = y_{\mid_{\Gamma}}, &\text{ on }\Gamma^1\times(0,T) ,\\
		y=0,&\text{ on }\Gamma^0\times(0,T),\\
		(y(\cdot,0),y_\Gamma (\cdot,0))=(0,0), \quad(\pt y(\cdot,0),\pt y_\Gamma (\cdot,0))=(0,0), &\text{ in }\Omega\times \Gamma^1.
	\end{cases}
\end{align}
The above system models, for instance, vibrations of the membrane of a bass drum \cite{EV'2017}, where the quantity $(y(x,t),y_\Gamma(x,t))$ describes the displacements of the membrane at point $x$ and time $t$. The constants $\sqrt{d}$ and $\sqrt{\delta}$ ($d,\delta >0$) represent the wave velocities in $\Omega$ and $\Gamma$, respectively. Moreover, $f$ and $g$ designate the forcing terms. The potentials in system \eqref{intro:problem:01} are assumed to be bounded, i.e., $q_{\Omega} \in L^{\infty}(\Omega)$ and $q_{\Gamma} \in L^{\infty}\left(\Gamma^1\right)$.

We aim to establish global Lipschitz stability for the source terms in system \eqref{intro:problem:01}. Our purpose is to determine a couple of unknown forcing terms $\mathbf{F}:= (f,g)$ in \eqref{intro:problem:01} that belong to an admissible set from a single partial measurement of the boundary flux $\pnu y$ on a subset of $\Gamma^0$ (see Theorem \ref{thm:stab}).

Hyperbolic evolution equations with dynamic-type boundary conditions have attracted considerable attention in the last years and have found many applications in various situations where wave propagation occurs along the boundary of a physical domain. Examples include vibrating membranes, equations of motion, structural acoustics, and oscillatory phenomena. The study of wave equations with kinetic boundary conditions of dynamic type requires careful consideration of both the PDE describing the evolution of the system inside the bulk and the evolution on its surface. Taking into account the coupled nature of the wave system, recent research has been done for the physical derivation \cite{Go'06}, well-posedness and regularity of such equations, see \cite{BE04, FGG05, GGG03, GL14, GS16, Gu20} and \cite{KW06, Mugnolo'2011, EV'2017, EV'2018, EV'13, XL03, XL04}. However, in terms of inverse problems and controllability, the existing literature is quite limited compared to the static case (Dirichlet, Neumann, and Robin) despite their importance in applications. Interested readers can be directed to \cite{Im02, IY01, IY012, BY17} for inverse problems in the static case. Further results on controllability for wave-like systems with dynamic boundary conditions have been investigated in \cite{AL03, BBE13, CNR23, LMR22, LX16, Vanspranghe'2020, ZG19}. More recently, the authors have numerically studied an inverse source problem for a one-dimensional wave equation with dynamic boundary conditions \cite{CGMZ23}. As for the numerical analysis of wave equations with dynamic boundary conditions, we refer to the recent works \cite{H17, HK21}.

The so-called Carleman estimates provide a powerful tool for studying inverse problems and controllability. Recently, Gal and Tebou \cite{Tebou2017} have proposed a Carleman estimate in the context of boundary controllability of a nonconservative version of \eqref{intro:problem:01} at any $T>T_0$ ($T_0$ is given explicitly). The main result of the aforementioned work has been established using the classical weight function
\begin{equation}\label{weq}
    \zeta(x, t)=|x-x_0|^2-\beta t^2 + C_0, \qquad (x,t) \in \Omega\times (-T,T)
\end{equation}
for some $x_0\notin \overline{\Omega}$ and constants $\beta,  C_0>0$ ($|\cdot|$ is the Euclidean norm). The authors have used the following formula for the Hessian in $\Gamma$ (as a Riemannian manifold) of $\zeta$:
\begin{equation}\label{assu1.1}
\nabla_{\Gamma}^2 \zeta=2 \mathbb{I}_{(n-1) \times(n-1)},
\end{equation}
where $\mathbb{I}$ denotes the identity matrix, see \cite[p.~338]{Tebou2017}. It turned out that the formula \eqref{assu1.1} played a crucial role in proving the Carleman estimate in \cite[Theorem 2.2]{Tebou2017}, as it implies the pseudo-convexity on the boundary (see H\"ormander \cite[Section 8.6]{Ho63}). However, it seems that this identity does not hold even for a standard example; see Remark \ref{contr1}.

In the present paper, our initial step consists of proving a global Carleman estimate for the system \eqref{intro:problem:01} that corrects and improves the previous one from \cite{Tebou2017}. This is achieved by modifying the previous weight function with another function used for the Schr\"odinger equation. Since our Carleman estimate improves its counterpart in \cite{Tebou2017}, a sharp boundary controllability result corresponding to \eqref{intro:problem:01} can be derived following a standard duality argument. We refer to \cite{BDEM22} for an observability result for \eqref{intro:problem:01} in the case of an annulus of $\mathbb R^2$, for which we consider a more general setting and give an explicit time required for the observability. Furthermore, we shall consider a new application of our Carleman estimate to an inverse problem by proving a global Lipschitz stability for the simultaneous recovery of two forcing terms from a single boundary measurement. As for parabolic equations with dynamic boundary conditions, we refer to \cite{ACMO21, MMS'17}. To the best of the authors' knowledge, the global Lipschitz stability for the inverse hyperbolic problem with general dynamic/Dirichlet boundary conditions has not been considered in the literature.

The rest of the paper is organized as follows: Section \ref{sec2} briefly discusses the functional setting of system \eqref{intro:problem:01}. Special attention is then paid to the weight functions needed to prove the Carleman estimate in Section \ref{sec3}. Section \ref{sec4} is devoted to the Lipschitz stability for the inverse source problem. In Section \ref{sec5}, we outline the exact boundary controllability corresponding to \eqref{intro:problem:01} with general initial data. Finally, in Section \ref{sec6}, we present some open problems related to Carleman estimates for \eqref{intro:problem:01}.

\section{General setting}\label{sec2}

\subsection{Functional spaces}
We will use the following real space $\mathcal{H} := L^2(\Omega,\d x) \times L^2\left(\Gamma^1, \d S\right),$ which is a Hilbert space equipped with the inner product given by
\begin{align*}
&\langle (u,u_\Gamma),(v,v_\Gamma)\rangle_{\mathcal{H}}:=\langle (u,u_\Gamma),(v,v_\Gamma)\rangle =\langle u,v\rangle_{L^2(\Omega)} +\langle u_\Gamma,v_\Gamma\rangle_{L^2\left(\Gamma^1\right)},
\end{align*}
where the Lebesgue measure on $\Omega$ is denoted by $\d x$ and the surface measure on $\Gamma$ by $\d S$. We also consider the following spaces:
\begin{align*}
H_{0,\Gamma^0}^k(\Omega) &:=\left\{y \in H^k(\Omega): y=0 \text { on } \Gamma^0\right\},\\
\mathcal{E}^k &:=\left\{\left(y, y_{\Gamma}\right) \in H_{ 0,\Gamma^0}^k(\Omega) \times H^k\left(\Gamma^1\right): y_{\Gamma}=y_{\mid_{\Gamma^1}}\right\}, \qquad k=1,2.
\end{align*}
For simplicity, we set $\mathcal{E}^1 = \mathcal{E}$ and $\mathcal{E}^{-1}$ for its dual space with respect to the pivot $\mathcal{H}$. The well-posedness result for system \eqref{intro:problem:01} can be derived following the approach outlined in \cite[Theorem 2.1]{Tebou2017}, leveraging the theory of semigroups.

\subsection{Weight functions}
We start with the following remark that presents a counterexample to the identity \eqref{assu1.1}.
\begin{remark}\label{contr1}
The weight function defined $\zeta$ by \eqref{weq} does not satisfy the identity \eqref{assu1.1}. Indeed, for $\Omega = \{(x,y,z) \in \R^3, \, x^2 +y^2 +z^2 < 1 \}$, the boundary $\Gamma:=\mathbb{S}^2$ is given by the unit $2$-sphere. Let $x_0 = (0,0,2) \notin \overline{\Omega}$ and consider the following parameterization of the $2$-sphere 
\begin{align*}
	\Phi: [0, 2\pi) \times [0,\pi)\,\, &\mapsto \mathbb{S}^2\\
	(\theta, \varphi)\,\, &\mapsto (\cos(\theta)\sin(\varphi), \sin(\theta)\sin(\varphi), \cos(\varphi)).
	\end{align*}
We have $\zeta(x,y,z,t)=\zeta(\Phi(\theta,\varphi),t)=5-4\cos(\varphi)-\beta t^2 +C_0$ on $\Gamma$. Using formula (4.3.53) in \cite{Jo17}, a simple calculation yields
\begin{equation*}
\nabla_{\Gamma}^2 \zeta = \begin{pmatrix}
4\sin^{2}(\varphi)\cos(\varphi) &  0 & \\
 \\
0  & 4 \cos(\varphi) & 
\end{pmatrix}   \neq 2 \mathbb{I}_{2 \times 2}.
\end{equation*}
\end{remark}
To define appropriate weight functions, we consider a specific type of convex set with some smoothness property of its boundary. We refer to \cite{MM'2023} for the details.

Henceforth, we assume that $\Omega = \Omega_0 \setminus \overline \Omega_1$, where $\Omega_0$ is an open bounded set with $C^2$-boundary and $\Omega_1$ is an open strongly convex set such that $\overline{\Omega_1}\subset \Omega_0$ (see \cite[Definition 1.2]{MM'2023}). Additionally, we may assume that $0 \in \Omega_1$ (up to a translation) so that $0\notin \overline{\Omega}$. A standard example is given by the annulus defined by $\Omega = \{ x \in \R^n \, : \, r_1 < |x| < r_2 \}=B_{r_2}\setminus \overline{B_{r_1}}$, with $0< r_1 < r_2$ and $B_r$ denotes the open ball in $\mathbb{R}^n$ of center $0$ and radius $r$. Furthermore, we set $\Gamma^{k} = \partial \Omega_k$ for $k=0,1$.

Following \cite{MM'2023}, we consider the Minkowski (gauge) function of the set $\Omega_1$ defined by
\begin{equation} \label{Mink}
\mu(x) = \inf \{\lambda \, \colon \lambda > 0 \, \text{ and } \, x \in \lambda \Omega_1\}, \qquad x \in \R^n.
\end{equation}
To ensure sufficient regularity of the function $\mu$, we assume that $\Gamma^1$ is of class $C^4$. We also consider
\begin{equation} \label{psi0}
\psi_{0}(x)=\mu^2(x), \qquad x \in \R^n.
\end{equation}
We shall use the following properties of the function $\psi_0$.
\begin{proposition}\label{propw}
Under the above assumptions, the function $\psi_0$ defined by \eqref{psi0} satisfies the following properties:
\begin{itemize}
	\item[$\mathrm{(i)}$] $\psi_{0}\in C^4(\overline{\Omega})$,
	\item[$\mathrm{(ii)}$] $\psi_{0} = 1$ on $\Gamma^1$,
	\item[$\mathrm{(iii)}$] $\nabla \psi_{0} \neq 0$ in $\overline{\Omega}$,
	\item[$\mathrm{(iv)}$] There exists $\rho > 0$ such that $\nabla^2 \psi_{0} (\xi, \xi) \geq 2\rho |\xi|^2$  in $\overline{\Omega}$ for all $\xi \in \mathbb{R}^n$,
        \item[$\mathrm{(v)}$] $\pnu \psi_0 < 0$ on $\Gamma^1$.
\end{itemize}
\end{proposition}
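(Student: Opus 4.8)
The plan is to read off all five properties from the basic structure of the Minkowski gauge $\mu$ of the strongly convex body $\Omega_1$, together with its positive $1$-homogeneity. Recall that $\mu$ is sublinear (hence convex), vanishes only at the origin, and satisfies $\Omega_1=\{\mu<1\}$ and $\Gamma^1=\partial\Omega_1=\{\mu=1\}$; since $\Gamma^1$ is a $C^4$ hypersurface and $\Omega_1$ is strongly convex, one has $\mu\in C^4(\R^n\setminus\{0\})$ (see \cite{MM'2023}). As $0\in\Omega_1$ we get $0\notin\overline\Omega$ and $\mu\geq 1$ on $\overline\Omega$ with equality exactly on $\Gamma^1$; consequently $\psi_0=\mu^2\in C^4(\overline\Omega)$, which is $\mathrm{(i)}$, and $\psi_0=\mu^2=1$ on $\Gamma^1$, which is $\mathrm{(ii)}$.

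For $\mathrm{(iii)}$, differentiating the Euler identity $x\cdot\nabla\mu(x)=\mu(x)$ shows that $\nabla\mu(x)\neq 0$ wherever $\mu(x)>0$, in particular on $\overline\Omega$; hence $\nabla\psi_0=2\mu\,\nabla\mu\neq 0$ on $\overline\Omega$. Differentiating the Euler identity once more yields $\nabla^2\mu(x)\,x=0$, so $\nabla^2\mu(x)$ is degenerate precisely in the radial direction — this is exactly why one works with $\psi_0=\mu^2$ rather than $\mu$: its Hessian $\nabla^2\psi_0=2\,\nabla\mu\otimes\nabla\mu+2\mu\,\nabla^2\mu$ restores positivity in the radial direction through the rank-one term.

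The heart of the statement is $\mathrm{(iv)}$, and the key is homogeneity: $\psi_0$ is $2$-homogeneous, so $\nabla^2\psi_0$ is $0$-homogeneous, i.e. $\nabla^2\psi_0(x)=\nabla^2\psi_0\!\big(x/\mu(x)\big)$ with $x/\mu(x)\in\Gamma^1$. It therefore suffices to prove the bound on the compact set $\Gamma^1$, and it then propagates with the same constant to the whole cone $\R^n\setminus\{0\}\supset\overline\Omega$. Fix $x\in\Gamma^1$ and $\xi\in\R^n$. Since $x\cdot\nabla\mu(x)=\mu(x)=1\neq 0$, we may write uniquely $\xi=s\,x+\zeta$ with $\zeta\cdot\nabla\mu(x)=0$; then $\nabla\mu(x)\cdot\xi=s$ and, using $\nabla^2\mu(x)\,x=0$, $\nabla^2\mu(x)(\xi,\xi)=\nabla^2\mu(x)(\zeta,\zeta)$. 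By definition of strong convexity of $\Omega_1$ (see \cite[Definition 1.2]{MM'2023}), the second fundamental form of $\Gamma^1=\{\mu=1\}$, which equals $\nabla^2\mu(x)/|\nabla\mu(x)|$ restricted to the tangent space, is uniformly positive definite; together with $\mathrm{(iii)}$ and compactness of $\Gamma^1$ this gives $c_1>0$ with $\nabla^2\mu(x)(\zeta,\zeta)\geq c_1|\zeta|^2$ for such $\zeta$. Hence $\nabla^2\psi_0(x)(\xi,\xi)=2s^2+2\,\nabla^2\mu(x)(\zeta,\zeta)\geq 2s^2+2c_1|\zeta|^2$, and since $|\xi|^2=|s\,x+\zeta|^2\leq 2s^2|x|^2+2|\zeta|^2\leq 2s^2R^2+2|\zeta|^2$ with $R:=\max_{\Gamma^1}|x|$, taking $2\rho:=\min(R^{-2},c_1)$ yields $\nabla^2\psi_0(x)(\xi,\xi)\geq 2\rho|\xi|^2$.

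Finally, for $\mathrm{(v)}$, on $\Gamma^1=\partial\Omega_1$ the outward unit normal of $\Omega=\Omega_0\setminus\overline{\Omega_1}$ points into $\Omega_1$, hence coincides with the inward normal of $\Omega_1$, namely $\nu=-\nabla\mu/|\nabla\mu|$ (as $\mu$ increases away from $\Omega_1$); using $\mu=1$ on $\Gamma^1$ we obtain $\pnu\psi_0=2\mu\,\nabla\mu\cdot\nu=-2|\nabla\mu|<0$ by $\mathrm{(iii)}$. The only delicate point I anticipate is the quantitative step in $\mathrm{(iv)}$ — converting strong convexity of $\Omega_1$ into the uniform Hessian bound while handling the radial degeneracy of $\nabla^2\mu$ — and the precise $C^4$ regularity of $\mu$, for which one relies on \cite{MM'2023}; the remaining items are direct computations with the gauge function.
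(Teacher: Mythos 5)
Your proof is correct, but it takes a more self-contained route than the paper. For items $\mathrm{(i)}$--$\mathrm{(iv)}$ the paper simply invokes \cite[Proposition 3.1]{MM'2023}, whereas you rederive them from the positive $1$-homogeneity of the gauge $\mu$: the Euler identity $x\cdot\nabla\mu(x)=\mu(x)$ gives $\mathrm{(iii)}$ and the radial degeneracy $\nabla^2\mu(x)\,x=0$, and then the splitting $\xi=s\,x+\zeta$ with $\zeta\cdot\nabla\mu(x)=0$, the $0$-homogeneity of $\nabla^2\psi_0$ (reducing everything to the compact set $\Gamma^1$), and the uniform positivity of the second fundamental form of $\Gamma^1$ furnished by strong convexity yield the quantitative bound $\mathrm{(iv)}$ with an explicit constant $2\rho=\min(R^{-2},c_1)$; this argument is sound (the estimate $2s^2+2c_1|\zeta|^2\geq \min(R^{-2},c_1)\,(2s^2R^2+2|\zeta|^2)\geq 2\rho|\xi|^2$ checks out), and it makes explicit the mechanism that the rank-one term $2\,\nabla\mu\otimes\nabla\mu$ in $\nabla^2\psi_0$ cures the radial degeneracy of $\nabla^2\mu$. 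Note, however, that you still rely on \cite{MM'2023} for the $C^4$ regularity of $\mu$ and for the quantitative content of strong convexity, so the logical dependence is comparable to the paper's citation; what your version buys is transparency of the constant $\rho$ and of the role of convexity, at the cost of length. For $\mathrm{(v)}$ the two arguments differ in form but not in substance: the paper uses the one-sided difference quotient $\lim_{h\to 0^+}\bigl(\psi_0(x+h\nu)-1\bigr)/h\leq 0$ together with $|\pnu\psi_0|=|\nabla\psi_0|>0$ (the gradient is purely normal since $\psi_0\equiv 1$ on $\Gamma^1$), while you identify $\nu=-\nabla\mu/|\nabla\mu|$ on $\Gamma^1$ directly and compute $\pnu\psi_0=-2|\nabla\mu|<0$; both are valid, yours being more explicit about the orientation of the normal and the paper's avoiding that discussion altogether.
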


\begin{proof}
We refer to \cite[Proposition 3.1]{MM'2023} for the proof of $\mathrm{(i)}$-$\mathrm{(iv)}$. We sketch the proof of $\mathrm{(v)}$ using the well-known properties of the function $\mu$. For any $x \in \Gamma^1$, $\partial_\nu \psi_0(x) =\lim\limits_{h \rightarrow 0^{+}} \frac{\psi_0(x + h \nu)-1}{h} \leq 0,$ because $\psi_0<1$ on $\Omega_1$ and $\psi_0(x)=1$. On the other hand, using $\mathrm{(ii)}$-$\mathrm{(iii)}$, we obtain $\left|\partial_\nu \psi_0(x)\right|=\left|\nabla \psi_0(x)\right|>0$. Thus, $\partial_\nu \psi_0(x)<0$.
\end{proof}

Then, we define the Carleman weight functions as follows
\begin{align}
\label{def:weight:functions}
\psi(x,t)=\psi_{0}(x)-\beta t^2+C_{1}\quad \text{and} \quad\varphi(x,t)=e^{\lambda \psi(x,t)}, \qquad  (x,t)\in \overline{\Omega}\times [-T,T], 
\end{align}
where $0<\beta<\rho d $, $C_{1}$ is chosen so that $\psi>1$, and $\lambda$ is a parameter (sufficiently large).

\begin{remark} 
$\bullet$ By the convexity assumption on $\Omega_1$, the weight function $\psi$ is constant with respect to $x$ on $\Gamma^1$. This implies that $\nabla_\Gamma \psi = 0$ and $\nabla_\Gamma^2 \psi=0$ on $\Gamma^1\times [-T,T]$.\\
$\bullet$ The minimal constant $\rho$ in Proposition \ref{propw}-$\mathrm{(iv)}$ is related to the smallest eigenvalue of $\nabla^2 \psi_0$, which is linked to the curvature of $\Gamma^1$ (via the second fundamental form); see \cite[\S 2.5]{Sch13} for more details.
\end{remark}
The weight function defined in \eqref{psi0} has been recently employed to prove a Carleman estimate for a Schr\"odinger equation with dynamic boundary conditions, leading to an exact controllability result in \cite{MM'2023}. We also refer to \cite{BM'2008, BMO07} for some relevant inverse problems. To the best of the authors' knowledge, the function $\psi$ has not been used in the context of wave equations with dynamic boundary conditions.

\section{Carleman estimate}\label{sec3}
This section presents our key result on the Carleman estimate for \eqref{intro:problem:01}. First, we introduce the following notations:
\begin{align}\label{Inclu}
\Omega_T = \Omega \times (-T,T), \quad &\gamma_T = \gamma \times (-T,T)\quad \text{ and } \qquad \Gamma^k_{T} = \Gamma^k \times (-T,T) \quad \text{ for } k=0,1, \notag\\
&\gamma:=\left\{ x\in\Gamma \colon \;\pnu\psi_0(x)\geq 0 \right\} \subseteq \Gamma^{0}, \qquad C^\prime=\min_{x\in\Gamma^1} |\partial_\nu \psi_0(x)|.
\end{align}
Note that $C^\prime >0$ by Proposition \ref{propw}-$\mathrm{(v)}$.

\begin{theorem}\label{Thm:Carleman}
There exist positive constants $C$, $s_1$ and $\lambda_1$ such that for all $ \lambda\geq \lambda_1$ and $ s\geq s_1$, the following Carleman estimate holds
\begin{align}\label{carleman}
	&\int_{\Omega_T} {e^{2s\varphi} \left(s^3\lambda^3\varphi^3|y|^2 +s\lambda \varphi |\nabla y|^2 +s\lambda \varphi |\pt y|^2 \right) }\mathrm{d}x \mathrm{d}t \notag\\
	&+\int_{\Gamma_{T}^{1}} {e^{2s\varphi} \left(s^3\lambda^3\varphi^3 |y_\Gamma|^2 + s\lambda \varphi|\pnu y|^2 + s\lambda \varphi|\pt y_\Gamma|^2 + [C^\prime d(\delta-d)-8\beta\delta]s\lambda \varphi |\nabla_\Gamma y_\Gamma|^2 \right)}\mathrm{d}S\mathrm{d}t \notag\\
	\leq & C\int_{\Omega_{T}}{e^{2s\varphi}|f|^2}\mathrm{d}x\mathrm{d}t
	+ C \int_{\Gamma_{T}^{1}}{e^{2s\varphi} |g|^2}\mathrm{d}S\mathrm{d}t+C s\lambda \int_{\gamma_{T}}{e^{2s\varphi} \varphi|\pnu y|^2}\mathrm{d}S\mathrm{d}t \\
	& \hspace{-0.3cm} +Cs\lambda\int_{\Omega}\mathrm{e}^{2s\varphi(x,T)}\varphi(x,T)(|\pt y(x,T)|^2+|\nabla y(x,T)|^2)\mathrm{d}x+Cs^3\lambda^3\int_{\Omega}e^{2s\varphi(x,T)}\varphi^3(x,T)|y(x,T)|^2\mathrm{d}x \notag\\
    & \hspace{-0.3cm} +Cs\lambda\int_{\Omega}e^{2s\varphi(x,-T)}\varphi(x,-T) \left(|\pt y(x,-T)|^2+|\nabla y(x,-T)|^2\right)\mathrm{d}x \notag\\
    & \hspace{-0.3cm} +C s^3\lambda^3\int_{\Omega}e^{2s\varphi(x,-T)}\varphi^3(x,-T)|y(x,-T)|^2\mathrm{d}x+Cs\lambda \int_{\Gamma^1}e^{2s\varphi(x,T)}\varphi(x,T)|\pt y_\Gamma(x,T)|^2\mathrm{d}S \notag\\
    & \hspace{-0.3cm} +Cs\lambda \int_{\Gamma^1}e^{2s\varphi(x,-T)}\varphi(x,-T)|\pt y_\Gamma(x,-T)|^2\mathrm{d}S+Cs^3\lambda^3\int_{\Gamma^1}e^{2s\varphi(x,T)}\varphi^3(x,T)|y_\Gamma(x,T)|^2\mathrm{d}S \notag\\
    & \hspace{-0.3cm} +Cs^3\lambda^3\int_{\Gamma^1}e^{2s\varphi(x,-T)}\varphi^3(x,-T)|y_\Gamma(x,-T)|^2\mathrm{d}S,\notag
\end{align}   
where
$(y,y_\Gamma)\in H^2\left(-T,T; \mathcal{H}\right)\cap L^2(-T,T;\mathcal{E})$ satisfies
$$
f :=\ptt y-d\Delta y  +q_{\Omega}y\in L^2(\Omega_T) \quad \text{and}\quad  g :=\ptt y_{\Gamma} - \delta \Delta_\Gamma y_\Gamma +d\pnu y + q_{\Gamma} y_\Gamma\in L^2\left(\Gamma_T^1\right).
$$
\end{theorem}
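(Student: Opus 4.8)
The plan is to follow the classical Carleman machinery for hyperbolic operators (Bukhgeim--Klibanov / Imanuvliev--Yamamoto type), carefully tracking the boundary contributions coming from $\Gamma^1$ where the dynamic condition lives. First I would introduce the conjugated variable $w = e^{s\varphi} y$ (and $w_\Gamma = e^{s\varphi} y_\Gamma$), and compute $P w := e^{s\varphi}\,\ptt(e^{-s\varphi} w) - d\, e^{s\varphi}\Delta(e^{-s\varphi} w)$. Splitting $Pw = P_1 w + P_2 w$ into its (formally) self-adjoint and skew-adjoint parts, with the usual normalization that keeps the lower-order zero-th order remainder on the right, one gets the pointwise identity $\|P_1 w\|^2 + \|P_2 w\|^2 + 2\langle P_1 w, P_2 w\rangle = \|e^{s\varphi} f\|^2 + (\text{l.o.t.})$. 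The cross term $2\langle P_1 w, P_2 w\rangle$, after integration by parts in $\Omega_T$, produces the positive bulk quadratic form $s^3\lambda^3\varphi^3|w|^2 + s\lambda\varphi|\nabla w|^2 + s\lambda\varphi|\pt w|^2$ (here the hypotheses $\nabla^2\psi_0(\xi,\xi)\ge 2\rho|\xi|^2$ and $0<\beta<\rho d$ of Proposition \ref{propw}(iv) and \eqref{def:weight:functions} are exactly what make the principal part of this form positive-definite, which is the analogue of the pseudo-convexity condition), plus boundary terms on $\Gamma^0_T$, $\Gamma^1_T$, and on the time slices $t=\pm T$.

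The heart of the argument — and the main obstacle — is the treatment of the boundary terms on $\Gamma^1_T$. On $\Gamma^0_T$ one has $y=0$, so only the $\pnu y$ term survives and it is split according to the sign of $\pnu\psi_0$: on $\gamma_T$ it goes to the right-hand side (this is the observation region), while on $\Gamma^0_T\setminus\gamma_T$ it has a favorable sign and is discarded. On $\Gamma^1_T$, however, one cannot simply throw away the boundary integrals; instead one must use the dynamic boundary equation $\ptt y_\Gamma - \delta\Delta_\Gamma y_\Gamma + d\pnu y + q_\Gamma y_\Gamma = g$ as a second Carleman-type identity on the manifold $\Gamma^1$. Here the crucial simplification, pointed out in the Remark after Proposition \ref{propw}, is that $\psi$ is \emph{constant in $x$} on $\Gamma^1$, so $\nabla_\Gamma\psi = 0$ and $\nabla_\Gamma^2\psi = 0$ there; this is precisely what replaces the false identity \eqref{assu1.1} of \cite{Tebou2017} and makes the tangential computations tractable. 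Performing the conjugation for the operator $\ptt - \delta\Delta_\Gamma$ with this weight and integrating by parts over $\Gamma^1_T$, the surviving coupling between bulk and boundary comes through the terms $d\pnu y$ and through the matching of $\pt w$ and $\nabla w$ traces; collecting everything, the boundary quadratic form on $\Gamma^1_T$ is controlled from below by $s^3\lambda^3\varphi^3|y_\Gamma|^2 + s\lambda\varphi|\pnu y|^2 + s\lambda\varphi|\pt y_\Gamma|^2 + [C'd(\delta-d) - 8\beta\delta]\,s\lambda\varphi|\nabla_\Gamma y_\Gamma|^2$, where the constant $C' = \min_{\Gamma^1}|\pnu\psi_0|>0$ enters through $\pnu\psi_0$ in the integration by parts (as in \eqref{Inclu}) and the term $-8\beta\delta$ comes from the time part of the weight. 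The bookkeeping of all these cross terms, and ensuring no uncontrolled term of order $s\lambda\varphi|\nabla_\Gamma y_\Gamma|^2$ with the wrong sign appears, is the delicate part.

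After establishing the pointwise/integrated identity with the positive bulk and boundary forms on the left and $\|e^{s\varphi}f\|^2_{L^2(\Omega_T)} + \|e^{s\varphi}g\|^2_{L^2(\Gamma^1_T)}$, the $\gamma_T$ flux term, the time-slice terms at $t=\pm T$, plus lower-order remainders on the right, I would absorb the lower-order terms: zero-th order terms like $\int e^{2s\varphi}s\lambda\varphi|w|^2$ and the potential contributions $q_\Omega, q_\Gamma \in L^\infty$ are dominated by the leading $s^3\lambda^3\varphi^3|w|^2$ term for $s$ large (using $\varphi \ge$ const $>0$), and similarly first-order remainders are absorbed into $s\lambda\varphi|\nabla w|^2$ and $s\lambda\varphi|\pt w|^2$ by choosing $\lambda$ then $s$ large enough in the right order. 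The choice of $\lambda$ is also what converts the factor $\lambda$ into $\lambda^2$, $\lambda^3$ in the various powers (differentiating $\varphi = e^{\lambda\psi}$ brings down factors of $\lambda\nabla\psi$, and $|\nabla\psi_0|\ne0$ on $\overline\Omega$ by Proposition \ref{propw}(iii) keeps these nondegenerate). Finally I would undo the substitution $w = e^{s\varphi}y$: since $\nabla w = e^{s\varphi}(\nabla y + s\lambda\varphi\nabla\psi\, y)$ etc., the conversion costs only terms of the form $s^2\lambda^2\varphi^2 e^{2s\varphi}|y|^2$ which are again absorbed by the dominant cubic term, and likewise on the boundary and on the time slices. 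This yields exactly \eqref{carleman} with the stated right-hand side, completing the proof.
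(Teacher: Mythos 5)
Your overall route coincides with the paper's: conjugate with $e^{s\varphi}$ both the bulk operator and the boundary operator $\ptt-\delta\Delta_\Gamma+d\pnu$, split each into $P_1+P_2$ and $P_{1\Gamma}+P_{2\Gamma}$, integrate the cross terms by parts, exploit $\nabla_\Gamma\psi=\nabla_\Gamma^2\psi=0$ on $\Gamma^1$, split $\Gamma^0$ according to the sign of $\pnu\psi_0$, keep the $t=\pm T$ time-slice terms, and return to $y$ at the end. However, there is a concrete gap in the step where you assert that the cross term $2\langle P_1w,P_2w\rangle$ ``produces the positive bulk quadratic form $s^3\lambda^3\varphi^3|w|^2+s\lambda\varphi|\nabla w|^2+s\lambda\varphi|\pt w|^2$'' with positivity guaranteed by $\nabla^2\psi_0\geq 2\rho\,\mathrm{Id}$ and $0<\beta<\rho d$. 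With the weight $\psi=\psi_0(x)-\beta t^2+C_1$ one has $\ptt\psi=-2\beta<0$ and $|\pt\psi|^2=4\beta^2t^2$, which vanishes at $t=0$; hence the coefficient of $|\pt w|^2$ produced by the cross term, namely $2s\lambda\varphi(\ptt\psi+\lambda|\pt\psi|^2)$, is strictly negative near $t=0$, and the same occurs on $\Gamma^1$ for $|\pt w_\Gamma|^2$ (these are $J_1$ and $J_{1\Gamma}$; cf.\ \eqref{Res1} and \eqref{Res9}). The pseudo-convexity hypotheses alone do not repair this. The missing ingredient is an energy-type identity obtained by pairing $P_1z$ and $P_{1\Gamma}z$ with $s\lambda\varphi z$, which trades $s\lambda\varphi|\pt z|^2$ against $s\lambda\varphi d|\nabla z|^2$ (resp.\ $\delta s\lambda\varphi|\nabla_\Gamma z|^2$) up to controlled $s^2\lambda^2$, boundary and $t=\pm T$ remainders; this is precisely the role of the extra terms added in \eqref{matcalJ} and of the inequalities \eqref{Res2}, \eqref{Res4}, \eqref{Resul2}--\eqref{Resul5}. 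That step is also what generates the loss $-8\beta\delta$ in front of $|\nabla_\Gamma y_\Gamma|^2$ and what allows $|\pt y_\Gamma|^2$ to be kept on the left-hand side of \eqref{carleman}, so it cannot be bypassed.

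Relatedly, your plan does not address two further sign problems that the hypotheses do not settle automatically. First, the boundary term $s^3\lambda^3\int_{\Gamma_T^1}\varphi^3|\pt\psi|^2\bigl(2\lambda|\pt\psi|^2-16\beta\bigr)|z|^2\,\mathrm{d}S\mathrm{d}t$ has the wrong sign near $t=0$ (where $\pt\psi$ vanishes) and cannot be fixed by taking $\lambda$ large alone; the paper handles it by splitting the time interval near $t=0$ and absorbing the bad part into the positive $s^3\lambda^3\varphi^3|z|^2$ boundary term coming from $B_0$ (see \eqref{Res233}). Second, in the bulk the zeroth-order term involves $b(\psi)=|\pt\psi|^2-d|\nabla\psi|^2$, which changes sign, so one needs the decomposition of $\Omega_T$ into the region $\Omega_T^\eta$ where $|b(\psi)|\leq\eta|\nabla\psi|^2$ and its complement before the $s^3\lambda^3\varphi^3|z|^2$ term can be made positive. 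Without these additional arguments the left-hand side terms $s\lambda\varphi|\pt y|^2$, $s\lambda\varphi|\pt y_\Gamma|^2$ and $s^3\lambda^3\varphi^3|y_\Gamma|^2$ in \eqref{carleman} are not obtained, so as written the proposal has a genuine gap, even though its general architecture matches the paper's proof.
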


\begin{remark}\label{rmkbeta}
The Carleman estimate \eqref{carleman} corrects and drastically improves \cite[Theorem 2.2]{Tebou2017}. First, by the modified weight function \eqref{def:weight:functions}, we use the fact that $\nabla_\Gamma^2 \psi=0$ on $\Gamma^1$ instead of the incorrect identity \eqref{assu1.1}. Secondly, the term with $|\nabla_\Gamma y_\Gamma|^2$ can be kept only on the left-hand side of the Carleman estimate as long as $\delta > d$ by choosing $\beta < \frac{C^\prime d(\delta-d)}{8\delta}$ (in addition to $\beta <\rho d$), while in \cite[Theorem 2.2]{Tebou2017} it appears on the right-hand side. Moreover, we have been able to absorb the term with $|\pt y_\Gamma|^2$ into the left-hand side, in contrast to \cite{Tebou2017}. Finally, being different from \cite[Section 2.3]{Tebou2017}, we do not assume that $(y,y_\Gamma)$ and its derivatives are null at $t=\pm T$ to avoid cut-off functions in the applications (see Sections \ref{sec4} and \ref{sec5}).
\end{remark}
\subsection{Proof of Theorem \ref{Thm:Carleman}}
Here, we prove the Carleman estimate stated in Theorem \ref{Thm:Carleman} by adopting a refined strategy compared to the one used in \cite[Theorem 2.2]{Tebou2017}. For the reader's convenience, the proof will be detailed in several steps. To simplify, we will often write $y$ instead of $y_\Gamma$ for functions on the boundary $\Gamma$. We also neglect zeroth-order terms as they do not influence the Carleman estimate.

\noindent $\bullet$ {\bf Step 1: Change of variable.}
Set $z=e^{s\varphi}y$ and
\begin{equation*}
    Pz=e^{s\varphi} (\ptt-d\Delta)(e^{-s\varphi}z)\; \text{in}\;\Omega_T \quad \text{ and } \quad P_{\Gamma}z=e^{s\varphi}(\ptt-\delta\Delta_{\Gamma}+d\pnu)(e^{-s\varphi}z) \; \text{on} \; \Gamma_{T}^1.
    \end{equation*}
Firstly, we compute $Pz$ and $P_{\Gamma}z$. We have
\begin{align}
\label{eq:P}
Pz=P_1 z+P_2 z\quad \text{ in }\Omega_T \qquad \text{ and } \qquad P_{\Gamma}z=P_{1\Gamma} z+P_{2\Gamma} z \quad \text{ on }\Gamma_{T}^1,
\end{align}  
where the operators $P_1$ and $P_2$ are given by
\begin{align*}
\begin{array}{ll}
     & P_1 z=\ptt z-d \Delta z +s^2(|\pt \varphi|^2-d|\nabla \varphi|^2)z,\\
     & P_2 z=-2s(\pt \varphi\pt z-d \nabla\varphi\cdot\nabla z)-s(\ptt\varphi-d\Delta\varphi)z,
\end{array}
\end{align*}
and $P_{1\Gamma}$ and $P_{2\Gamma}$ are given by
\begin{align*}
\begin{array}{ll}
&P_{1\Gamma} z=\ptt z-\delta\Delta_{\Gamma}z+d\pnu z+s^2(|\pt \varphi|^2-\delta|\nabla_{\Gamma}\varphi|^2)z,\\
&P_{2\Gamma}z=-2s\left(\pt\varphi\pt z-\delta\left<\nabla_{\Gamma}\varphi,\nabla_{\Gamma}z \right>_{\Gamma} \right)-s\left( \ptt\varphi-\delta \Delta_{\Gamma}\varphi+d\pnu \varphi\right)z.
\end{array}
\end{align*}  
Now, by applying the $L^2(\Omega_T)$-norm and the $L^2(\Gamma^1_T)$-norm to \eqref{eq:P}, respectively, we obtain
\begin{align}
\label{after:conjug}
\|Pz\|_{L^2(\Omega_{T})}^2+\|P_{\Gamma}z\|_{L^2(\Gamma_{T}^1)}^2=&\|P_{1}z\|_{L^2(\Omega_{T})}^2+\|P_{2}z\|_{L^2(\Omega_{T})}^2+\|P_{1\Gamma}\|_{L^2(\Gamma_{T}^1)}^2+\|P_{2\Gamma}\|_{L^2(\Gamma_{T}^1)}^2\notag\\
&+2\left< P_1 z,P_2 z \right>_{L^2(\Omega_{T})} +2\left< P_{1\Gamma} z,P_{2\Gamma} z \right>_{L^2(\Gamma_{T}^1)}\\
=&\|e^{s\varphi}f\|_{L^2(\Omega_{T})}^2+\|e^{s\varphi}g\|_{L^2(\Gamma_{T}^1)}^2.\notag
\end{align}
The next steps of the proof are dedicated to computing the terms $\left< P_1 z,P_2 z \right>_{L^2(\Omega_{T})}$ and $\left< P_{1\Gamma} z,P_{2\Gamma} z \right>_{L^2(\Gamma_{T}^1)}$.

\noindent $\bullet$ {\bf Step 2: Bulk integrals.} By computing the term $\left< P_1 z,P_2 z \right>_{L^2(\Omega_{T})}$, we obtain
\begin{align*}
    \begin{split} 
\left< P_1 z,P_2 z\right>_{L^2(\Omega_{T})}
&=-2s\int_{\Omega_{T}}\ptt z\left( \pt \varphi\pt z-d\nabla \varphi\cdot\nabla z \right)\mathrm{d}x \mathrm{d}t-s\int_{\Omega_{T}}\ptt z z\left( \ptt\varphi-d \Delta \varphi \right)\mathrm{d}x \mathrm{d}t\\
+&2ds\int_{\Omega_{T}}\Delta z \left( \pt \varphi\pt z-d\nabla\varphi \cdot \nabla z \right)\mathrm{d}x \mathrm{d}t+ds \int_{\Omega_{T}}\Delta z z \left( \ptt \varphi-d \Delta \varphi \right)\mathrm{d}x \mathrm{d}t\\
-& 2s^3\int_{\Omega_{T}}\left( |\pt \varphi|^2-d|\nabla\varphi|^2 \right)z\left( \pt\varphi \pt z-d\nabla\varphi\cdot\nabla z\right)\mathrm{d}x\mathrm{d}t\\
-&s^3\int_{\Omega_{T}}\left( |\pt \varphi|^2-d|\nabla \varphi|^2 \right)\left( \ptt\varphi-d\Delta\varphi \right)|z|^2\mathrm{d}x \mathrm{d}t =\sum_{k=1}^6 I_{k}.
\end{split}
\end{align*}
Next, we calculate the six terms $I_{k}$, $k=1,\ldots,6,$ by using integration by parts in space (Green's formula) and in time. Now, $I_{1}$ is given by
\begin{align*}
I_{1}=& -s\left[ \int_{\Omega} \pt\varphi |\pt z|^2 \mathrm{d}x\right]_{-T}^{T}+s\int_{\Omega_{T}}|\pt z|^2\left( \ptt\varphi+d\Delta \varphi\right)\mathrm{d}x\mathrm{d}t-2ds\int_{\Omega_{T}}\pt z \nabla \pt\varphi\cdot \nabla z \mathrm{d}x\mathrm{d}t\\
&-ds\int_{\Gamma_{T}}\partial_{\nu}\varphi |\pt z|^2\mathrm{d}S\mathrm{d}t+2sd\left[ \int_{\Omega}\pt z \nabla\varphi\cdot\nabla z\mathrm{d}x \right]_{-T}^{T}.
\end{align*}    
Integration by parts yields
\begin{align*}
I_{2}=&-s\left[ \int_{\Omega}\pt z z\left( \pt^2\varphi-d\Delta\varphi \right) \mathrm{d}x\right]_{-T}^T+s\int _{\Omega_{T}} |\pt z|^2 \left( \ptt \varphi-d\Delta\varphi \right)\mathrm{d}x\mathrm{d}t-\dfrac{s}{2}\int_{\Omega_{T}}|z|^2\left( \ptt-d \Delta \right)\ptt\varphi \mathrm{d}x\mathrm{d}t\\
&+\dfrac{s}{2}\left[ \int_{\Omega}|z|^2(\pt^2-d\Delta)\pt\varphi \mathrm{d}x\right]_{-T}^T.
\end{align*}
Furthermore, by Green’s formula and integration by parts, we obtain
\begin{align*}
        I_{3}=&2ds\int_{\Gamma_{T}}\left[ \pnu z(\pt\varphi\pt z-d\nabla \varphi\cdot \nabla z)+\dfrac{d}{2}\pnu \varphi |\nabla z|^2 \right]\mathrm{d}S\mathrm{d}t-ds\left[\int_{\Omega}|\nabla z|^2\pt\varphi\mathrm{d}x\right]_{-T}^{T}\\
        &+ds\int_{\Omega_T}\left[ |\nabla z|^2(\ptt\varphi-d\Delta\varphi)-2\pt z\nabla z\cdot\nabla\pt\varphi +2d\nabla^2\varphi(\nabla z,\nabla z)\right]\mathrm{d}x\mathrm{d}t.
\end{align*}
On the other hand
\begin{align*}
        I_{4}=&-ds\int_{\Omega_T} \left[  |\nabla z|^2\left( \ptt \varphi-d\Delta\varphi \right)-\dfrac{1}{2}|z|^2\Delta(\ptt\varphi-d\Delta\varphi)  \right]\mathrm{d}x\mathrm{d}t\\
        &+ds\int_{\Gamma_T}\left[\pnu z z (\ptt\varphi-d\Delta\varphi)-\dfrac{1}{2}|z|^2\pnu(\ptt\varphi-d\Delta\varphi)\right]\mathrm{d}S\mathrm{d}t.
\end{align*}
Next, we have
\begin{align*}
        I_{5}=&s^3\int_{\Omega_T}|z|^2(\ptt \varphi-d\Delta\varphi)(|\pt\varphi|^2-d|\nabla\varphi|^2)\mathrm{d}x\mathrm{d}t\\
        &+2s^3\int_{\Omega_T}|z|^2\left(|\pt\varphi|^2\ptt\varphi-2d\pt\varphi\nabla\varphi\cdot\nabla\pt\varphi+d^2\nabla^2\varphi(\nabla\varphi,\nabla\varphi)\right)\mathrm{d}x\mathrm{d}t\\
        &-s^3\left[\int_{\Omega} |z|^2 \pt\varphi(|\pt\varphi|^2-d|\nabla \varphi|^2)\mathrm{d}x \right]_{-T}^T+d s^3 \int_{\Gamma_T}|z|^2\pnu \varphi(|\pt\varphi|^2-d|\nabla\varphi|^2)\mathrm{d}S\mathrm{d}t.
\end{align*}
Finally,
$\displaystyle
I_6=-s^3\int_{\Omega_T}|z|^2(\ptt\varphi-d\Delta\varphi)(|\pt\varphi|^2-d|\nabla\varphi|^2)\mathrm{d}x\mathrm{d}t.
$
Elementary computations show that
\begin{align}
        &\left\langle P_{1}z,P_{2}z\right\rangle_{L^2(\Omega_T)}
        =2 s \int_{\Omega_T}\left(\partial_t^2 \varphi\left|\partial_t z\right|^2-2 d \partial_t z \nabla \partial_t \varphi \cdot \nabla z+d^2 \nabla^2 \varphi(\nabla z, \nabla z)\right) \mathrm{d} x \mathrm{d} t \notag\\ 
        &\hspace{-0.2cm} +2 s^3 \int_{\Omega_T}|z|^2\left(\left|\partial_t \varphi\right|^2 \partial_t^2 \varphi+d^2 \nabla^2 \varphi(\nabla \varphi, \nabla \varphi)-2 d \partial_t \varphi \nabla \varphi \cdot \nabla \partial_t \varphi\right) \mathrm{d} x \mathrm{~d} t-\frac{s}{2} \int_{\Omega_T}|z|^2\left(\partial_t^2-d \Delta\right)^2 \varphi \mathrm{d} x \mathrm{d} t\notag\\
        &\hspace{-0.2cm} -s\left[ \int_{\Omega}\pt\varphi|\pt z|^2 \mathrm{d} x\right]_{-T}^T+2sd\left[ \int_{\Omega}\pt z \nabla\varphi\cdot\nabla z\mathrm{d}x \right]_{-T}^{T}-s\left[ \int_{\Omega}\pt z z\left( \pt^2\varphi-d\Delta\varphi \right) \mathrm{d}x\right]_{-T}^T\notag\\
        &\hspace{-0.2cm} -ds\left[\int_{\Omega}|\nabla z|^2\pt\varphi\mathrm{d}x\right]_{-T}^{T}-s^3\left[\int_{\Omega} |z|^2 \pt\varphi(|\pt\varphi|^2-d|\nabla \varphi|^2)\mathrm{d}x \right]_{-T}^T+\dfrac{s}{2}\left[ \int_{\Omega}|z|^2(\pt^2-d\Delta)\pt\varphi \mathrm{d}x\right]_{-T}^T\notag\\
        &\hspace{-0.2cm} +sd\int_{\Gamma_T}\left[ d\pnu\varphi|\nabla z|^2-2d\pnu z\nabla\varphi\cdot \nabla z+2\pt\varphi\pt z\pnu z-\pnu \varphi |\pt z|^2 \right]\mathrm{d}S\mathrm{d}t\notag\\
        & \hspace{-0.2cm}+ds\int_{\Gamma_{T}}\left[ z\pnu z (\ptt \varphi-d \Delta \varphi)+s^2\pnu \varphi |z|^2(|\pt\varphi|^2-d |\nabla\varphi|^2)-\dfrac{1}{2}|z|^2\pnu (\ptt\varphi-d \Delta\varphi) \right]\mathrm{d}S\mathrm{d}t. \label{C1}
\end{align}
The terms on $\Omega_T$ are quite classical, similar to the Dirichlet case. However, the dynamic boundary condition does not allow the same simplification of boundary terms. Moreover, we retain the boundary terms in time to avoid the cut-off argument in applications.

\noindent $\bullet$ {\bf Step 3: Boundary terms. } We have
\begin{align*}
        &\left\langle P_{1\Gamma} z ,P_{2\Gamma} z\right\rangle_{L^2(\Gamma_{T}^1)}
        =-2s\int_{\Gamma_{T}^1}\ptt z\left( \pt\varphi\pt z-\delta\left\langle \nabla_{\Gamma}\varphi,\nabla_{\Gamma}z \right\rangle_{\Gamma} \right)\mathrm{d}S\mathrm{d}t\\
        &-s\int_{\Gamma_{T}^1}\ptt z\left(\ptt\varphi-\delta \Delta_{\Gamma}\varphi+d\pnu\varphi\right)z\mathrm{d}S\mathrm{d}t-2ds\int_{\Gamma_{T}^1}\pnu z\left( \pt\varphi\pt z-\delta\left\langle \nabla_{\Gamma}\varphi,\nabla_{\Gamma}z \right\rangle_{\Gamma} \right)\mathrm{d}S\mathrm{d}t\\
        &-ds\int_{\Gamma_{T}^1}\pnu z\left(\ptt\varphi-\delta \Delta_{\Gamma}\varphi+d\pnu\varphi  \right)z\mathrm{d}S\mathrm{d}t+2\delta s\int_{\Gamma_{T}^1}\Delta_\Gamma z \left( \pt\varphi\pt z-\delta\left\langle \nabla_{\Gamma}\varphi,\nabla_{\Gamma}z \right\rangle_{\Gamma} \right)\mathrm{d}S\mathrm{d}t\\
        &+\delta s\int_{\Gamma_{T}^1}\Delta_{\Gamma}z\left(\ptt\varphi-\delta \Delta_{\Gamma}\varphi+d\pnu\varphi  \right)z\mathrm{d}S\mathrm{d}t\\
        &-2s^3 \int_{\Gamma_{T}^1}\left( |\pt\varphi|^2-\delta|\nabla_{\Gamma}\varphi|^2 \right)z\left( \pt\varphi\pt z-\delta\left\langle \nabla_{\Gamma}\varphi,\nabla_{\Gamma}z \right\rangle_{\Gamma} \right)\mathrm{d}S\mathrm{d}t\\
        &-s^3\int_{\Gamma_{T}^1} \left( |\pt\varphi|^2-\delta|\nabla_{\Gamma}\varphi|^2 \right)\left(\ptt\varphi-\delta \Delta_{\Gamma}\varphi+d\pnu\varphi  \right)|z|^2\mathrm{d}S\mathrm{d}t =:\sum_{k=1}^8 I_{k\Gamma}.
\end{align*}
Now we calculate the eight terms $I_{k\Gamma}$, $k=1,\ldots, 8,$ as above
\begin{align*}
        I_{1\Gamma}=&\left[ -s\int_{\Gamma^1}|\pt z|^2 \pt\varphi \mathrm{d}S\right]_{-T}^{T}+s\int_{\Gamma_{T}^1}|\pt z|^2(\ptt\varphi+\delta\Delta_{
        \Gamma}\varphi)\mathrm{d}S\mathrm{d}t-2s\delta\int_{\Gamma_{T}^1}\pt z\left\langle \nabla _{\Gamma}\pt \varphi,\nabla_{\Gamma}z \right\rangle_{\Gamma}\mathrm{d}S\mathrm{d}t\\
        &+2s\delta\left[\int_{\Gamma^1}\pt z \left\langle  \nabla_{\Gamma}\varphi,\nabla_{\Gamma}z\right\rangle_{\Gamma} \mathrm{d}S \right]_{-T}^T.
\end{align*}
Similarly, we have
\begin{align*}
        I_{2\Gamma}=&-s\left[\int_{\Gamma^1} \pt z z  (\ptt \varphi-\delta \Delta_{\Gamma}\varphi+d\pnu\varphi)\mathrm{d}S\right]_{-T}^T+s\int_{\Gamma_{T}^1}|\pt z|^2 (\ptt \varphi+d\pnu\varphi-\delta\Delta_{\Gamma}\varphi)\mathrm{d}S\mathrm{d}t\\
        &-\dfrac{s}{2}\int_{\Gamma_{T}^1} |z|^2 (\ptt+d\pnu-\delta \Delta _{\Gamma})\ptt\varphi\mathrm{d}S\mathrm{d}t+\dfrac{s}{2}\left[\int_{\Gamma^1} |z|^2(\ptt+d\pnu-\delta \Delta _{\Gamma})\pt\varphi\mathrm{d}S \right]_{-T}^T.
\end{align*}
On the other hand, we have
\begin{align*}
        I_{3\Gamma}=-2ds\int_{\Gamma_{T}^1}\pnu z \left(\pt\varphi\pt z-\delta\left\langle\nabla_{\Gamma}\varphi,\nabla_{\Gamma}z  \right\rangle_{\Gamma}  \right)\mathrm{d}S\mathrm{d}t \text{ and } I_{4\Gamma}=-ds\int_{\Gamma_{T}^1}\pnu z z \left( \ptt\varphi+d\pnu\varphi-\delta\Delta_{\Gamma}\varphi \right)\mathrm{d}S\mathrm{d}t.
\end{align*}
Next, by using the surface Green formula, we obtain
\begin{align*}
        I_{5\Gamma}=&\delta s\int_{\Gamma_{T}^1} \left[|\nabla_{\Gamma}z|^2(\ptt\varphi-\delta\Delta_{\Gamma}\varphi) -2\left\langle \nabla_{\Gamma}z,\nabla_{\Gamma}\pt\varphi \right\rangle_{\Gamma}\pt z +2\delta\nabla_{\Gamma}^2\varphi(\nabla_{\Gamma}z,\nabla_{\Gamma} z) \right]\mathrm{d}S\mathrm{d}t
        -\delta s\left[\int_{\Gamma^1}|\nabla_{\Gamma}z|^2\pt\varphi \mathrm{d}S\right]_{-T}^T.
\end{align*}
In the same manner, $I_{6\Gamma}$ can be computed as follows
\begin{align*}
    \begin{split}
        I_{6\Gamma}=&-\delta s \int_{\Gamma_{T}^1}\left[ |\nabla_{\Gamma}z|^2\left( \ptt\varphi+d\pnu\varphi-\delta\Delta_{\Gamma}\varphi \right)+\dfrac{1}{2}|z|^2\Delta_{\Gamma}\left( \ptt\varphi+d\pnu\varphi-\delta\Delta_{\Gamma}\varphi \right)   \right]\mathrm{d}S\mathrm{d}t.
    \end{split}
\end{align*}
We integrate by parts in time and use the surface Green formula to obtain
\begin{align*}
    \begin{split}
        I_{7\Gamma}=&-s^3\left[ \int_{\Gamma^1}|z|^2\pt\varphi(|\pt\varphi|^2-\delta|\nabla_{\Gamma}\varphi|^2)\mathrm{d}S
 \right]_{0}^T+s^3\int_{\Gamma_{T}^1}|z|^2\left(\ptt\varphi-\delta\Delta_{\Gamma}\varphi  \right)\left( |\pt \varphi|^2-\delta |\nabla_{\Gamma}\varphi|^2 \right)\mathrm{d}S\mathrm{d}t\\
        &+2s^3\int_{\Gamma_{T}^1}|z|^2\left( |\pt\varphi|^2\ptt\varphi-2\delta \pt\varphi \left\langle \nabla_{\Gamma}\varphi,\nabla_{\Gamma}\pt\varphi  \right\rangle_{\Gamma}+\delta^2 \nabla_{\Gamma}^2 \varphi(\nabla_{\Gamma}\varphi,\nabla_{\Gamma}\varphi) \right)\mathrm{d}S\mathrm{d}t.
    \end{split}
\end{align*}
Finally, we have    
$ \displaystyle
    I_{8\Gamma}=-s^3\int_{\Gamma_{T}^1}|z|^2\left( \ptt\varphi-\delta \Delta_{\Gamma}\varphi+d\pnu\varphi \right)\left( |\pt \varphi|^2-\delta |\nabla_{\Gamma}\varphi|^2 \right)\mathrm{d}S\mathrm{d}t.
$
By collecting all the above terms, we obtain
\begin{align*}
       & \left\langle P_{1}z,P_{2}z\right\rangle_{L^2(\Omega_T)}+\left\langle P_{1\Gamma}z,P_{2\Gamma}z \right\rangle_{L^2(\Gamma_{T}^1)}
       =2 s \int_{\Omega_T}\left(\partial_t^2 \varphi\left|\partial_t z\right|^2-2 d \partial_t z \nabla \partial_t \varphi \cdot \nabla z+d^2 \nabla^2 \varphi(\nabla z, \nabla z)\right) \mathrm{d} x \mathrm{d} t  \notag\\
       & +2 s^3 \int_{\Omega_T}|z|^2\left(\left|\partial_t \varphi\right|^2 \partial_t^2 \varphi+d^2 \nabla^2 \varphi(\nabla \varphi, \nabla \varphi)-2 d \partial_t \varphi \nabla \varphi \cdot \nabla \partial_t \varphi\right) \mathrm{d} x \mathrm{~d} t-\frac{s}{2} \int_{\Omega_T}|z|^2\left(\partial_t^2-d \Delta\right)^2 \varphi \mathrm{d} x \mathrm{d} t\\
       &+2s\int_{\Gamma_{T}^1}\left( \ptt\varphi|\pt z|^2-2\delta\pt z\left\langle \nabla_{\Gamma}\pt\varphi,\nabla_{\Gamma}z \right\rangle_{\Gamma}+\delta^2\nabla_{\Gamma}^2\varphi(\nabla_{\Gamma}z,\nabla_{\Gamma}z) \right)\mathrm{d}S\mathrm{d}t \notag\\
        &+2s^3\int_{\Gamma_{T}^1}|z|^2\left( |\pt\varphi|^2\ptt\varphi+\delta^2\nabla_{\Gamma}^2\varphi(\nabla_{\Gamma}\varphi,\nabla_{\Gamma}\varphi)-2\delta \pt\varphi\left\langle \nabla_{\Gamma}\varphi,\nabla_{\Gamma}\pt\varphi \right\rangle_{\Gamma} \right)\mathrm{d}S\mathrm{d}t\\
        &-\dfrac{s}{2}\int_{\Gamma_{T}^1}|z|^2(\ptt-\delta\Delta_{\Gamma})^2\varphi\mathrm{d}S\mathrm{d}t-s\left[ \int_{\Omega}\pt\varphi|\pt z|^2 \right]_{-T}^T-\left[ s\int_{\Gamma^1}|\pt z|^2\pt\varphi\mathrm{d}S \right]_{-T}^T \notag\\
        &+2sd\left[ \int_{\Omega}\pt z \nabla\varphi\cdot\nabla z\mathrm{d}x \right]_{-T}^{T}-s\left[ \int_{\Omega}\pt z z\left( \pt^2\varphi-d\Delta\varphi \right) \mathrm{d}x\right]_{-T}^T-ds\left[\int_{\Omega}|\nabla z|^2\pt\varphi\mathrm{d}x\right]_{-T}^{T} \notag\\
        &-s^3\left[\int_{\Omega} |z|^2 \pt\varphi(|\pt\varphi|^2-d|\nabla \varphi|^2)\mathrm{d}x \right]_{-T}^T+\dfrac{s}{2}\left[ \int_{\Omega}|z|^2(\pt^2-d\Delta)\pt\varphi \mathrm{d}x\right]_{-T}^T \notag\\
         &+2s\delta\left[\int_{\Gamma^1}\pt z \left\langle  \nabla_{\Gamma}\varphi,\nabla_{\Gamma}z\right\rangle_{\Gamma} \mathrm{d}S \right]_{-T}^T-s\left[\int_{\Gamma^1} \pt z z  (\ptt \varphi-\delta \Delta_{\Gamma}\varphi+d\pnu\varphi)\mathrm{d}S\right]_{-T}^T \notag\\
        &+\dfrac{s}{2}\left[\int_{\Gamma^1} |z|^2(\ptt+d\pnu-\delta \Delta _{\Gamma})\pt\varphi\mathrm{d}S \right]_{-T}^T-\delta s\left[\int_{\Gamma^1}|\nabla_{\Gamma}z|^2\pt\varphi \mathrm{d}S\right]_{-T}^T  -s^3\left[ \int_{\Gamma^1}|z|^2\pt\varphi(|\pt\varphi|^2-\delta|\nabla_{\Gamma}\varphi|^2)\mathrm{d}S
 \right]_{-T}^T  \notag\\
        &+sd\int_{\Gamma_T}\left[ d\pnu\varphi|\nabla z|^2-2d\pnu z\nabla\varphi\cdot \nabla z+2\pt\varphi\pt z\pnu z-\pnu \varphi |\pt z|^2 \right]\mathrm{d}S\mathrm{d}t \notag\\
        &-ds\int_{\Gamma_{T}^1}\left[ \delta\pnu\varphi|\nabla_{\Gamma}z|^2-2\delta\pnu z\left\langle \nabla_{\Gamma}\varphi,\nabla_{\Gamma}z \right\rangle_{\Gamma}+ 2\pt\varphi\pt z\pnu z-\pnu \varphi|\pt z|^2  \right]\mathrm{d}S\mathrm{d}t \notag\\
         &+ds\int_{\Gamma_{T}}\left[ z\pnu z (\ptt \varphi-d \Delta \varphi)+s^2\pnu \varphi |z|^2(|\pt\varphi|^2-d |\nabla\varphi|^2)-\dfrac{1}{2}|z|^2\pnu (\ptt\varphi-d \Delta\varphi) \right]\mathrm{d}S\mathrm{d}t \notag\\
         &-ds \int_{\Gamma_{T}^1}\left[ z \pnu z\left( \ptt\varphi+d\pnu \varphi-\delta\Delta_{\Gamma}\varphi \right)+s^2\pnu\varphi|z|^2\left( |\pt\varphi|^2-\delta|\nabla_{\Gamma}\varphi|^2 \right)+\dfrac{1}{2}|z|^2\pnu\left( \ptt\varphi-\delta\Delta_{\Gamma}\varphi \right) \right]\mathrm{d}S\mathrm{d}t.
\end{align*}
Using the fact that $|\nabla z|^2=|\pnu z|^2+|\nabla_{\Gamma}z|^2$, $\nabla\varphi\cdot\nabla z=\pnu\varphi\pnu z+\left\langle \nabla_{\Gamma}\varphi,\nabla_{\Gamma}z \right\rangle_{\Gamma}$ on $\Gamma$, and $z=0$ on $\Gamma_{T}^0$, we obtain
\begin{align*}
    \begin{split}
        & sd\int_{\Gamma_T}\left[ d\pnu\varphi|\nabla z|^2-2d\pnu z\nabla\varphi\cdot \nabla z+2\pt\varphi\pt z\pnu z-\pnu \varphi |\pt z|^2 \right]\mathrm{d}S\mathrm{d}t\\
        &-ds\int_{\Gamma_{T}^1}\left[ \delta\pnu\varphi|\nabla_{\Gamma}z|^2-2\delta\pnu z\left\langle \nabla_{\Gamma}\varphi,\nabla_{\Gamma}z \right\rangle_{\Gamma}+ 2\pt\varphi\pt z\pnu z-\pnu \varphi|\pt z|^2  \right]\mathrm{d}S\mathrm{d}t\\
        =&-sd^2\int_{\Gamma_{T}}\pnu\varphi|\pnu z|^2\mathrm{d}S\mathrm{d}t+sd(d-\delta)\int_{\Gamma_{T}^1}\left( \pnu\varphi |\nabla_{\Gamma}z|^2-2\pnu z\left\langle \nabla_{\Gamma}\varphi,\nabla_{\Gamma}z  \right\rangle_{\Gamma} \right)\mathrm{d}S\mathrm{d}t.
    \end{split}
\end{align*}
Although $\nabla_{\Gamma}\varphi=0$ on $\Gamma^1$, we retain the corresponding terms to maintain a general computation applicable to similar cases.

Therefore, $\left\langle P_{1}z,P_{2}z\right\rangle_{L^2(\Omega_T)}+\left\langle P_{1\Gamma}z,P_{2\Gamma}z \right\rangle_{L^2(\Gamma_{T}^1)}=:\sum\limits_{k=1}^3 (J_k+J_{k\Gamma})+B_{0},$ where
\begin{align}
       J_1 =&2 s \int_{\Omega_T}\left(\partial_t^2 \varphi\left|\partial_t z\right|^2-2 d \partial_t z \nabla \partial_t \varphi \cdot \nabla z+d^2 \nabla^2 \varphi(\nabla z, \nabla z)\right) \mathrm{d} x \mathrm{d} t,\notag\\
       J_2 =&2 s^3 \int_{\Omega_T}|z|^2\left(\left|\partial_t \varphi\right|^2 \partial_t^2 \varphi+d^2 \nabla^2 \varphi(\nabla \varphi, \nabla \varphi)-2 d \partial_t \varphi \nabla \varphi \cdot \nabla \partial_t \varphi\right) \mathrm{d} x \mathrm{d} t,\notag\\
       J_3 =&-\frac{s}{2} \int_{\Omega_T}|z|^2\left(\partial_t^2-d \Delta\right)^2 \varphi\, \mathrm{d} x \mathrm{d} t,\notag\\
       J_{1\Gamma} =&2s\int_{\Gamma_{T}^1}\left( \ptt\varphi|\pt z|^2-2\delta\pt z\left\langle \nabla_{\Gamma}\pt\varphi,\nabla_{\Gamma}z \right\rangle_{\Gamma}+\delta^2\nabla_{\Gamma}^2\varphi(\nabla_{\Gamma}z,\nabla_{\Gamma}z) \right)\mathrm{d}S\mathrm{d}t,\notag\\
        J_{2\Gamma} =&2s^3\int_{\Gamma_{T}^1}|z|^2\left( |\pt\varphi|^2\ptt\varphi+\delta^2\nabla_{\Gamma}^2\varphi(\nabla_{\Gamma}\varphi,\nabla_{\Gamma}\varphi)-2\delta \pt\varphi\left\langle \nabla_{\Gamma}\varphi,\nabla_{\Gamma}\pt\varphi \right\rangle_{\Gamma} \right)\mathrm{d}S\mathrm{d}t,\notag\\
        J_{3\Gamma} =&-\dfrac{s}{2}\int_{\Gamma_{T}^1}|z|^2(\ptt-\delta\Delta_{\Gamma})^2\varphi\,\mathrm{d}S\mathrm{d}t, \text{ and}\notag\\
        B_0=&-s\lambda\left[ \int_{\Omega}\pt\psi\varphi|\pt z|^2 \mathrm{d}x\right]_{-T}^T-s\lambda\left[ \int_{\Gamma^1}\pt\psi \varphi|\pt z|^2\mathrm{d}S \right]_{-T}^T-d^2s\lambda\int_{\Gamma_{T}}\pnu\psi|\pnu z|^2\mathrm{d}S\mathrm{d}t\notag\\
           &+2sd\lambda\left[ \int_{\Omega}\varphi\pt z \nabla\psi\cdot\nabla z\mathrm{d}x \right]_{-T}^T -s\lambda\left[ \int_{\Omega}\varphi \pt z z (\pt^2 \psi-d\Delta\psi)\mathrm{d}x \right]_{-T}^T\notag \\
           &-s\lambda^2\left[ \int_{\Omega}\varphi \pt z z(|\pt \psi|^2-d|\nabla\psi|^2)\mathrm{d}x \right]_{-T}^T\notag\\
       &-ds\lambda\left[ \int_{\Omega}\varphi \pt\psi |\nabla z|^2 \mathrm{d}x\right]_{-T}^T-s^3\lambda^3\left[ \int_{\Omega}\varphi^3|z|^2\pt\psi (|\pt\psi|^2-d|\nabla\psi|^2)\mathrm{d}x \right]_{-T}^T\notag\\
       &+\dfrac{s}{2}\lambda^2\left[ \int_{\Omega}\varphi |z|^2\pt\psi (\pt^2\psi+\lambda |\pt\psi|^2) \mathrm{d}x\right]_{-T}^T+\dfrac{s}{2}\lambda\left[ \int_{\Omega}\varphi |z|^2(\pt^3\psi+2\lambda\pt\psi\pt^2\psi) \mathrm{d}x\right]_{-T}^T\notag\\
       &-s\lambda \left[ \int_{\Gamma^1}\varphi \pt z z (\pt^2\psi+d\pnu\psi)\mathrm{d}S \right]_{-T}^T-s\lambda^2\left[ \int_{\Gamma^1} \varphi \pt z z|\pt\psi|^2 \mathrm{d}S\right]_{-T}^T-s^3\lambda^3\left[ \int_{\Gamma^1}\varphi^3 |z|^2 (\pt\psi)^3 \mathrm{d}S\right]_{-T}^T \notag\\
       &+\dfrac{s}{2}\lambda^2\left[ \int_{\Gamma^1}\varphi |z|^2 \pt\psi(3\pt^2\psi+\lambda|\pt\psi|^2)\mathrm{d}S \right]_{-T}^T-\delta s \lambda \left[ \int_{\Gamma^1}\varphi |\nabla_{\Gamma}z|^2\pt\psi\mathrm{d}S \right]_{-T}^T \notag\\
       &+d(d-\delta)s\lambda\int_{\Gamma_{T}^1}\pnu\psi\varphi|\nabla_\Gamma z|^2\mathrm{d}S\mathrm{d}t -ds\lambda\int_{\Gamma_T ^1}z\pnu z\varphi \left( d\pnu\psi+d\Delta\psi+\lambda d|\nabla\psi|^2 \right)\mathrm{d}S\mathrm{d}t\notag\\
       &-d^2s^3\lambda^3\int_{\Gamma_{T}^1}|z|^2\varphi\pnu\psi|\nabla\psi|^2\mathrm{d}S\mathrm{d}t \notag\\
        &-ds\lambda^2\int_{\Gamma_{T}^1}\varphi|z|^2\pnu\psi\left( \ptt\psi+\lambda|\pt\psi|^2-\dfrac{d}{2}\Delta\psi-\dfrac{d\lambda}{2}|\nabla\psi|^2 \right)\mathrm{d}S\mathrm{d}t \notag\\
         &+ds\lambda\int_{\Gamma_{T}^1}\varphi |z|^2\left( \dfrac{d}{2}\pnu \Delta\psi+\dfrac{d\lambda}{2}\pnu(|\nabla\psi|^2)\right)\mathrm{d}S\mathrm{d}t, \label{B0}
\end{align}
where we have used
\begin{align}
\label{estimate:der:weights:01}
\begin{split}
    &\pt\varphi=\lambda\pt\psi\varphi,\quad \nabla \varphi=\lambda\varphi\nabla\psi, \quad \ptt\varphi=\lambda\varphi\left( \ptt\psi+\lambda|\pt\psi|^2 \right), \nabla\pt\varphi=\lambda^2\pt\psi\varphi\nabla\psi,\\
&\Delta\varphi=\lambda\varphi\left( \Delta\psi+\lambda|\nabla\psi|^2 \right),\quad\nabla^2\varphi(\nabla z,\nabla z)=\lambda\varphi\left( \nabla^2\psi(\nabla z,\nabla z)+\lambda|\nabla z\cdot\nabla\psi|^2 \right),
\end{split}
\end{align}
and 
$
\pnu \varphi=\lambda \varphi \pnu \psi \text{ on }\Gamma_T \text{ and } \nabla_{\Gamma}\varphi=\nabla_\Gamma \psi=0  \text{ on }\Gamma^1_T.
$
Next, we estimate the sum $\sum\limits_{k=1}^3 (J_k+J_{k\Gamma})$. $C$ will denote a positive generic constant. It may change from one line to another, and it is independent of the parameters $\lambda>0$ and $s>0$. We set
\begin{equation}\label{matcalJ}
    \mathcal{J} :=\sum\limits_{k=1}^3 (J_k+J_{k\Gamma})+(d\rho+\beta)\lambda\int_{\Gamma_{T}^1}\varphi\pnu\psi|z|^2\mathrm{d}S\mathrm{d}t+2(3\beta-d\rho)\int_{\Gamma_{T}^1}\varphi z\pnu z \mathrm{d}S\mathrm{d}t.
\end{equation}
By making use of \eqref{estimate:der:weights:01}, we next prove the following estimate for $\mathcal{J}$:
\begin{align}\label{Res-3.1}
\begin{split}
        & \mathcal{J} \geq C s\lambda \int_{\Omega_T} \varphi\left( |\pt z|^2+|\nabla z|^2+\varphi^2s^2\lambda^2|z|^2 \right)\mathrm{d}x\mathrm{d}t+C s \lambda\int_{\Gamma_{T}^1}\varphi |\pt z|^2\mathrm{d}S\mathrm{d}t\\
        &-8\beta s\lambda \delta\int_{\Gamma_{T}^1}\varphi|\nabla_{\Gamma} z|^2\mathrm{d}S\mathrm{d}t-\dfrac{1}{2}\|P_1 z\|_{L^2(\Omega_{T})}^2-\dfrac{1}{2}\|P_{1\Gamma} z\|_{L^2(\Gamma_{T}^1)}^2-Cs^2\lambda^2\int_{\Gamma_{T}^1}\varphi^2|z|^2\mathrm{d}S\mathrm{d}t\\
        &+s^3\lambda^3\int_{\Gamma_{T}^1}\varphi^3|\pt\psi|^2(2\lambda |\pt \psi|^2-16\beta)|z|^2\mathrm{d}S\mathrm{d}t-Cs\lambda^2\int_{\Omega}\varphi(\cdot,T)|z(\cdot,T)|^2\mathrm{d}x\\
        &-C s\lambda^2\int_{\Omega}\varphi(\cdot,-T)|z(\cdot,-T)|^2\mathrm{d}x-Cs\lambda\int_{\Omega}\varphi(\cdot,T)|\pt z(\cdot,T)|^2\mathrm{d}x\\
 &-Cs\lambda\int_{\Omega}\varphi(\cdot,-T)|\pt z(\cdot,-T)|^2\mathrm{d}x-C_Ts\lambda^2\int_{\Gamma^1}\varphi(\cdot,T)|z(\cdot,T)|^2\mathrm{d}S\\
 &-C_T s\lambda^2\int_{\Gamma^1}\varphi(\cdot,-T)|z(\cdot,-T)|^2\mathrm{d}S-Cs\lambda\int_{\Gamma^1}\varphi(\cdot,T)|\pt z(\cdot,T)|^2\mathrm{d}S\\
 &-Cs\lambda\int_{\Gamma^1}\varphi(\cdot,-T)|\pt z(\cdot,-T)|^2\mathrm{d}S.
 \end{split}
\end{align}
This will be done in several steps. Let $\varepsilon>0$. First, we claim that
\begin{align}\label{Res0}
&J_{1}+2(d\rho+\beta)ds\lambda\int_{\Gamma_{T}^1}\varphi\left( \dfrac{\lambda}{2}\pnu\psi|z|^2-z\pnu z \right)\mathrm{d}S\mathrm{d}t \notag\\
&\qquad \geq2(\rho d-\beta)s\lambda\int_{\Omega_{T}}\varphi\left( |\pt z|^2+d|\nabla z|^2 \right)\mathrm{d}x\mathrm{d}t-\left( 2(\rho d-\beta)+4\beta \right)\left[ Cs^3\lambda^3\int_{\Omega_T}\varphi^3|b(\psi)||z|^2\mathrm{d}x\mathrm{d}t\right. \notag\\
&\left.+C_{\varepsilon}s^2\lambda^2\int_{\Omega_T}\varphi^2|z|^2\mathrm{d}x\mathrm{d}t+\varepsilon\|P_{1}z\|^2 \right]
-\left( 2(\rho d-\beta)+4\beta \right)  \left|s\lambda \left[ \int_{\Omega} \varphi \pt z z \mathrm{d}x \right]_{-T}^T-\dfrac{s\lambda}{2}\left[ \int_{\Omega}\pt\varphi|z|^2\mathrm{d}x
 \right]_{-T}^T  \right|,
\end{align}
where
\begin{equation}\label{bpsi}
b(\psi)=|\pt \psi|^2-d|\nabla\psi|^2.
\end{equation}
Let us prove \eqref{Res0}. We have
\begin{align*}
    \begin{split}
        J_{1}=&2s\lambda\int_{\Omega_T}\varphi\left[ \ptt\psi|\pt z|^2+d^2\nabla^2\psi(\nabla z,\nabla z)+\lambda\left( \pt\psi\pt z-d\nabla z\cdot\nabla \psi \right)^2 \right]\mathrm{d}x\mathrm{d}t.
    \end{split}
\end{align*}
Therefore
\begin{align}\label{Res1}
        J_1 \geq 4d^2 \rho s \lambda \int_{\Omega_T}\varphi |\nabla z|^2\mathrm{d}x\mathrm{d}t-4\beta s\lambda\int_{\Omega_T}\varphi|\pt z|^2\mathrm{d}x\mathrm{d}t.
\end{align}
On the other hand, to absorb the term with $|\pt z|^2$, we compute
\begin{align*}
&\left\langle P_1 z,s\lambda \varphi z \right\rangle_{L^2(\Omega_T)}\\
=&s\lambda \left[ \int_{\Omega} \varphi \pt z z \mathrm{d}x \right]_{-T}^T-s\lambda \int_{\Omega_T}\varphi|\pt z|^2\mathrm{d}x\mathrm{d}t+s\lambda d \int_{\Omega_T}\varphi |\nabla z|^2\mathrm{d}x\mathrm{d}t+\dfrac{s\lambda^2}{2}\int_{\Omega_T}\varphi|z|^2\left( \ptt\psi-d\Delta\psi \right)\mathrm{d}x\mathrm{d}t\\
&-\dfrac{s\lambda}{2}\left[ \int_{\Omega}\pt\varphi|z|^2\mathrm{d}x
\right]_{-T}^T+\dfrac{s\lambda^3}{2}\int_{\Omega_T}\varphi b(\psi) |z|^2\mathrm{d}x\mathrm{d}t+s^3\lambda^3\int_{\Omega_T}\varphi^3 b(\psi)|z|^2\mathrm{d}x\mathrm{d}t\\
&+s\lambda d\int_{\Gamma_T^1}\varphi\left( \dfrac{\lambda}{2}\pnu\psi |z|^2-z\pnu z \right)\mathrm{d}S\mathrm{d}t.
\end{align*}
Thus,
\begin{align}\label{Res2}
    \begin{split}
        &\left|s\lambda\int_{\Omega_T}\varphi|\pt z|^2 \mathrm{d}x\mathrm{d}t-s\lambda d\int_{\Gamma_{T}^1}\varphi\left( \dfrac{\lambda}{2}\pnu\psi|z|^2-z\pnu z \right)\mathrm{d}S\mathrm{d}t\right|\\
        \leq &\varepsilon\|P_1 z\|_{L^2(\Omega_T)}^2+C_{\varepsilon}s^2\lambda^2\int_{\Omega_T}\varphi^2|z|^2\mathrm{d}x\mathrm{d}t+s\lambda d\int_{\Omega_T}\varphi|\nabla z|^2\mathrm{d}x\mathrm{d}t+Cs^3\lambda^3\int_{\Omega_T}|b(\psi)|\varphi^3|z|^2\mathrm{d}x\mathrm{d}t\\
        &+\left|s\lambda \left[ \int_{\Omega} \varphi \pt z z \mathrm{d}x \right]_{-T}^T-\dfrac{s\lambda}{2}\left[ \int_{\Omega}\pt\varphi|z|^2\mathrm{d}x
 \right]_{-T}^T  \right|.
    \end{split}
\end{align}
Combining \eqref{Res1} and \eqref{Res2}, we obtain 
\begin{align}\label{Res3}
       & J_1+4\beta s\lambda d\int_{\Gamma_T^1}\varphi\left( \dfrac{\lambda}{2}\pnu\psi|z|^2-z\pnu z \right)\mathrm{d}S\mathrm{d}t\\
       &\geq 4d(\rho d-\beta)s\lambda\int_{\Omega_T}\varphi |\nabla z|^2\mathrm{d}x\mathrm{d}t-4\beta\left( Cs^3\lambda ^3 \int_{\Omega_T}\varphi^3|b(\psi)||z|^2\mathrm{d}x\mathrm{d}t+\varepsilon\|P_1 z\|_{L^2(\Omega)}^2+C_{\varepsilon}s^2\lambda^2\int_{\Omega_T}\varphi^2|z|^2\mathrm{d}x\mathrm{d}t \right) \notag\\
       &-4\beta\left|s\lambda \left[ \int_{\Omega} \varphi \pt z z \mathrm{d}x \right]_{-T}^T-\dfrac{s\lambda}{2}\left[ \int_{\Omega}\pt\varphi|z|^2\mathrm{d}x
 \right]_{-T}^T  \right|.  \notag
\end{align}
Using \eqref{Res2} again, we obtain
\begin{align}\label{Res4}
        &2(\rho d-\beta)s\lambda\int_{\Omega_T}\varphi |\pt z|^2\mathrm{d}x\mathrm{d}t-2(\rho d-\beta)\left|s\lambda \left[ \int_{\Omega} \varphi \pt z z \mathrm{d}x \right]_{-T}^T-\dfrac{s\lambda}{2}\left[ \int_{\Omega}\pt\varphi|z|^2\mathrm{d}x
 \right]_{-T}^T  \right|\notag\\
        &-2(\rho d-\beta)\left(  Cs^3\lambda^3 \int_{\Omega_T}\varphi^3|b(\psi)||z|^2\mathrm{d}x\mathrm{d}t+C_{\varepsilon}s^2\lambda^2\int_{\Omega_T}\varphi^2|z|^2\mathrm{d}x\mathrm{d}t+\varepsilon\|P_1 z\|_{L^2(\Omega_{T})}^2\right)\notag\\
        \leq &2(\rho d-\beta)s\lambda d\left[ \int_{\Omega_T}\varphi|\nabla z|^2\mathrm{d}x\mathrm{d}t +\int_{\Gamma_{T}^1}\varphi\left( \dfrac{\lambda}{2}\pnu\psi|z|^2-z\pnu z \right)\mathrm{d}S \mathrm{d}t\right].
\end{align}
Thus, by using \eqref{Res3} and \eqref{Res4}, we complete the proof of the claim \eqref{Res0}.

Next, we prove that
 \begin{align}\label{Res5}
     J_2\geq 2s^3\lambda^4\int_{\Omega_T}\varphi^3|b(\psi)|^2|z|^2\mathrm{d}x\mathrm{d}t+4s^3\lambda^3\int_{\Omega_T}\varphi^3|z|^2\left( d^2\rho|\nabla\psi|^2-\beta|\pt\psi|^2 \right)\mathrm{d}x\mathrm{d}t.
 \end{align}
We have
\begin{align*}
\begin{split}
        J_2=&2s^3\lambda^3\int_{\Omega_T}\varphi^3|z|^2\left( |\pt\psi|^2\ptt\psi+d^2\nabla^2\psi(\nabla\psi,\nabla\psi) \right)\mathrm{d}x\mathrm{d}t+2s^3\lambda^4\int_{\Omega_T}\varphi^3\left( |\pt\psi|^2-d|\nabla\psi|^2 \right)^2|z|^2\mathrm{d}x\mathrm{d}t.
    \end{split}
\end{align*}
Using the fact that $b(\psi)=|\pt\psi|^2-d|\nabla\psi|^2$ and $\nabla^2\psi(\nabla\psi,\nabla\psi)\geq 2\rho|\nabla\psi|^2$, we obtain \eqref{Res5}.

Now, by calculating the term $\left(\ptt-d\Delta \right)^2\varphi$, we obtain
\begin{align}\label{Res-1}
    |J_3|\leq C s \lambda^4 \int_{\Omega_T}\varphi |z|^2\mathrm{d}x\mathrm{d}t\leq C s \lambda^2 \int_{\Omega_T}\varphi^2 |z|^2\mathrm{d}x\mathrm{d}t .
\end{align}
In the following step, it remains to estimate $J_{1\Gamma}$, $J_{2\Gamma}$ and $J_{3\Gamma}$. We emphasize that $\nabla_\Gamma \varphi=0$ on $\Gamma^1$ and thus many terms involving the tangential derivatives of $\varphi$ vanish. Starting with $J_{1\Gamma}$, we have
\begin{align}
    \begin{split}\label{Res9}
        J_{1\Gamma}=&-4\beta s\lambda\int_{\Gamma_{T}^1}\varphi|\pt z|^2\mathrm{d}S\mathrm{d}t+2s\lambda^2\int_{\Gamma_{T}^1}\varphi|\pt\psi|^2|\pt z|^2\mathrm{d}S\mathrm{d}t \geq -4\beta s \lambda \int_{\Gamma_{T}^1}\varphi |\pt z|^2\mathrm{d}S\mathrm{d}t.
    \end{split}
\end{align}
Using a similar reasoning to $ \left\langle P_{1}z, s\lambda \varphi z \right\rangle_{L^2(\Omega_T)}$, we obtain
\begin{align*}
        &\left\langle P_{1\Gamma}z, s\lambda \varphi z \right\rangle_{L^2(\Gamma_{T}^1)}\\
        =&s\lambda \left[ \int_{\Gamma^1}\varphi \pt z z \mathrm{d}S \right]_{-T}^T-s\lambda \int_{\Gamma_{T}^1}\varphi |\pt z|^2\mathrm{d}S\mathrm{d}t+\delta s\lambda \int_{\Gamma_{T}^1}\varphi |\nabla_{\Gamma}z|^2\mathrm{d}S\mathrm{d}t+\dfrac{s\lambda^2}{2}\int_{\Gamma_{T}^1} \varphi \ptt\psi |z|^2 \mathrm{d}S\mathrm{d}t\\
        &\hspace{-0.3cm} -\dfrac{s\lambda}{2}\left[ \int_{\Gamma^1}\pt\varphi |z|^2 \mathrm{d}S \right]_{-T}^T+\dfrac{s\lambda^3}{2}\int_{\Gamma_{T}^1}\varphi|\pt \psi|^2 |z|^2  \mathrm{d}S\mathrm{d}t+s^3\lambda^3 \int_{\Gamma_{T}^1} \varphi^3 |\pt\psi|^2|z|^2 \mathrm{d}S\mathrm{d}t
        +ds\lambda \int_{\Gamma_{T}^1} \varphi z \pnu z \mathrm{d}S\mathrm{d}t.
        \end{align*}
        Thus,
\begin{align}\label{Resul2}
    \begin{split}
        &\left|s\lambda\int_{\Gamma_{T}^1}\varphi|\pt z|^2 \mathrm{d}S\mathrm{d}t-s\lambda d\int_{\Gamma_{T}^1}\varphi z\pnu z \mathrm{d}S\mathrm{d}t\right|\\
        \leq &\varepsilon\|P_{1\Gamma} z\|_{L^2(\Gamma_{T}^1)}^2+C_{\varepsilon}s^2\lambda^2\int_{\Gamma_{T}^1}\varphi^2|z|^2\mathrm{d}S\mathrm{d}t+s\lambda \delta\int_{\Gamma_{T}^1}\varphi|\nabla_{\Gamma} z|^2\mathrm{d}S\mathrm{d}t+\dfrac{3}{2}s^3\lambda^3\int_{\Gamma_{T}^1}|\pt\psi|^2\varphi^3|z|^2\mathrm{d}S\mathrm{d}t\\
        &+\left|s\lambda \left[ \int_{\Gamma^1} \varphi \pt z z \mathrm{d}S \right]_{-T}^T-\dfrac{s\lambda}{2}\left[ \int_{\Gamma^1}\pt\varphi|z|^2\mathrm{d}S
 \right]_{-T}^T  \right|.
    \end{split}
\end{align}
Combining \eqref{Res9} and \eqref{Resul2}, we obtain
\begin{align}\label{Resul3}
       & J_{1\Gamma}+4\beta s\lambda d\int_{\Gamma_T^1}\varphi z\pnu z \mathrm{d}S\mathrm{d}t \notag\\
       & \geq -4\beta s\lambda\delta\int_{\Gamma_{T}^1}\varphi |\nabla_{\Gamma} z|^2\mathrm{d}S\mathrm{d}t-4\beta\left( \dfrac{3}{2}s^3\lambda ^3 \int_{\Gamma_{T}^1}\varphi^3|\pt\psi|^2|z|^2\mathrm{d}S\mathrm{d}t+\varepsilon\|P_{1\Gamma} z\|_{L^2(\Gamma_{T}^1)}^2+C_{\varepsilon}s^2\lambda^2\int_{\Gamma_{T}^1}\varphi^2|z|^2\mathrm{d}S\mathrm{d}t \right) \notag\\
       &-4\beta\left|s\lambda \left[ \int_{\Gamma^1} \varphi \pt z z \mathrm{d}S \right]_{-T}^T-\dfrac{s\lambda}{2}\left[ \int_{\Gamma^1}\pt\varphi|z|^2\mathrm{d}S
 \right]_{-T}^T  \right|.
\end{align}
Using \eqref{Resul2} again, we obtain 
\begin{align}\label{Resul4}
&4\beta s\lambda d\int_{\Gamma_{T}^1}\varphi z\pnu z \mathrm{d}S\mathrm{d}t\notag\\
&\geq 4\beta s\lambda\int_{\Gamma_{T}^1}\varphi|\pt z|^2 \mathrm{d}S\mathrm{d}t-4\beta s\lambda \delta\int_{\Gamma_{T}^1}\varphi|\nabla_{\Gamma} z|^2\mathrm{d}S\mathrm{d}t-4\beta\left|s\lambda \left[ \int_{\Gamma^1} \varphi \pt z z \mathrm{d}S \right]_{-T}^T-\dfrac{s\lambda}{2}\left[ \int_{\Gamma^1}\pt\varphi|z|^2\mathrm{d}S
 \right]_{-T}^T  \right|\notag\\
&-4\beta \left( \varepsilon\|P_{1\Gamma} z\|_{L^2(\Gamma_{T}^1)}^2+C_{\varepsilon}s^2\lambda^2\int_{\Gamma_{T}^1}\varphi^2|z|^2\mathrm{d}S\mathrm{d}t +\dfrac{3}{2}s^3\lambda^3\int_{\Gamma_{T}^1}|\pt\psi|^2\varphi^3|z|^2\mathrm{d}S\mathrm{d}t\right).
\end{align}
By using \eqref{Resul3} and \eqref{Resul4}, we obtain
\begin{align}\label{Resul5}
&J_{1\Gamma}+8\beta s\lambda d\int_{\Gamma_{T}^1}\varphi z\pnu z \mathrm{d}S\mathrm{d}t\notag\\
&\geq 4\beta s\lambda\int_{\Gamma_{T}^1}\varphi|\pt z|^2 \mathrm{d}S\mathrm{d}t-8\beta s\lambda \delta\int_{\Gamma_{T}^1}\varphi|\nabla_{\Gamma} z|^2\mathrm{d}S\mathrm{d}t-8\beta\left|s\lambda \left[ \int_{\Gamma^1} \varphi \pt z z \mathrm{d}S \right]_{-T}^T-\dfrac{s\lambda}{2}\left[ \int_{\Gamma^1}\pt\varphi|z|^2\mathrm{d}S
 \right]_{-T}^T  \right|\notag\\
&-8\beta \left( \varepsilon\|P_{1\Gamma} z\|_{L^2(\Gamma_{T}^1)}^2+C_{\varepsilon}s^2\lambda^2\int_{\Gamma_{T}^1}\varphi^2|z|^2\mathrm{d}S\mathrm{d}t +\dfrac{3}{2}s^3\lambda^3\int_{\Gamma_{T}^1}|\pt\psi|^2\varphi^3|z|^2\mathrm{d}S\mathrm{d}t\right).
\end{align}
For the term $J_{2\Gamma}$, we have
\begin{align}\label{Res10}
    \begin{split}
    J_{2\Gamma}=&2s^3\lambda^4\int_{\Gamma_{T}^1}\varphi^3|\pt\psi|^4|z|^2\mathrm{d}S\mathrm{d}t-4\beta s^3\lambda^3\int_{\Gamma_{T}^1}\varphi^3|\pt\psi|^2|z|^2\mathrm{d}S\mathrm{d}t.
    \end{split}
\end{align}
Finally, we have 
$\displaystyle
    J_{3\Gamma}=-\dfrac{s}{2}\int_{\Gamma_{T}^1}|z|^2\left( \ptt-\delta\Delta_{\Gamma} \right)^2\varphi\,\mathrm{d}S\mathrm{d}t.
$
Similarly to the calculation for $J_{3}$, we find
\begin{align}\label{Res11}
    J_{3\Gamma}\leq C s\lambda^4\int_{\Gamma_{T}^1}\varphi |z|^2\mathrm{d}S\mathrm{d}t\leq C s\lambda^2\int_{\Gamma_{T}^1}\varphi^2 |z|^2\mathrm{d}S\mathrm{d}t.
\end{align}
By collecting the estimates \eqref{Res0}, \eqref{Res5}, \eqref{Res-1}, and \eqref{Res9}-\eqref{Res11}, we obtain
\begin{align}\label{Res-2}
&\mathcal{J\geq}2(\rho d-\beta )s\lambda\int_{\Omega_T}\varphi \left( |\pt z|^2+d|\nabla z|^2 \right)\mathrm{d}x\mathrm{d}t+2s^3\lambda^4\int_{\Omega_T}\varphi^3 |b(\psi)|^2|z|^2\mathrm{d}x\mathrm{d}t\notag\\
        &+4s^3\lambda^3\int_{\Omega_T}\varphi^3|z|^2\left( d^2\rho|\nabla \psi|^2-\beta|\pt\psi|^2 \right)\mathrm{d}x\mathrm{d}t-C\left( s^3\lambda^3\int_{\Omega_T}\varphi^3 |z|^2|b(\psi)|\mathrm{d}x\mathrm{d}t\right.\notag\\
        &\left.+\varepsilon\|P_1 z\|_{L^2(\Omega_{T})}^2+C_\varepsilon s^2\lambda^2\int_{\Omega_T}\varphi^2|z|^2\mathrm{d}x\mathrm{d}t \right)-C\left|s\lambda \left[ \int_{\Omega} \varphi \pt z z \mathrm{d}x \right]_{-T}^T-\dfrac{s\lambda}{2}\left[ \int_{\Omega}\pt\varphi|z|^2\mathrm{d}x
 \right]_{-T}^T  \right|\notag\\
 &+ 4\beta s\lambda\int_{\Gamma_{T}^1}\varphi|\pt z|^2 \mathrm{d}S\mathrm{d}t-8\beta s\lambda \delta\int_{\Gamma_{T}^1}\varphi|\nabla_{\Gamma} z|^2\mathrm{d}S\mathrm{d}t+2s^3\lambda^4\int_{\Gamma_{T}^1}\varphi^3|\pt\psi|^4|z|^2\mathrm{d}S\mathrm{d}t\\
 &-4\beta s^3\lambda^3\int_{\Gamma_{T}^1}\varphi^3|\pt\psi|^2|z|^2\mathrm{d}S\mathrm{d}t-8\beta\left|s\lambda \left[ \int_{\Gamma^1} \varphi \pt z z \mathrm{d}S \right]_{-T}^T-\dfrac{s\lambda}{2}\left[ \int_{\Gamma^1}\pt\varphi|z|^2\mathrm{d}S
 \right]_{-T}^T  \right|\notag\\
&-8\beta \left( \varepsilon\|P_{1\Gamma} z\|_{L^2(\Gamma_{T}^1)}^2+C_{\varepsilon}s^2\lambda^2\int_{\Gamma_{T}^1}\varphi^2|z|^2\mathrm{d}S\mathrm{d}t +\dfrac{3}{2}s^3\lambda^3\int_{\Gamma_{T}^1}|\pt\psi|^2\varphi^3|z|^2\mathrm{d}S\mathrm{d}t\right)\notag.
\end{align}

Since $\beta d<\rho d^2$, there exists a small positive constant $\eta$ such that
$\beta d+\beta\eta<\rho d^2.$
Next, we denote 
$ \Omega_{T}^{\eta}=\left\lbrace (x,t)\in\Omega_T \colon\;\; |b(\psi)|\leq \eta |\nabla\psi|^2 \right\rbrace,$
where $b(\psi)$ is defined by \eqref{bpsi}. By \eqref{Res-2}, we obtain
\begin{align*}
    \begin{split}
        &\mathcal{J}\geq 2(\rho d-\beta )s\lambda\int_{\Omega_T}\varphi \left( |\pt z|^2+d|\nabla z|^2 \right)\mathrm{d}x\mathrm{d}t+2s^3\lambda^4\int_{\Omega_{T}\setminus\Omega_{T}^\eta}\varphi^3 |b(\psi)|^2|z|^2\mathrm{d}x\mathrm{d}t\\
        &+4s^3\lambda^3\int_{\Omega_{T}^\eta}\varphi^3|z|^2\left( d^2\rho|\nabla \psi|^2-\beta|\pt\psi|^2 \right)\mathrm{d}x\mathrm{d}t+4s^3\lambda^3\int_{\Omega_{T}\setminus\Omega_{T}^\eta}\varphi^3|z|^2\left( d^2\rho|\nabla \psi|^2-\beta|\pt\psi|^2 \right)\mathrm{d}x\mathrm{d}t\\
        &-C\left( s^3\lambda^3\int_{\Omega_{T}^\eta}\varphi^3 |z|^2|b(\psi)|\mathrm{d}x\mathrm{d}t+s^3\lambda^3\int_{\Omega_{T}\setminus\Omega_{T}^\eta}\varphi^3 |z|^2|b(\psi)|\mathrm{d}x\mathrm{d}t+\varepsilon\|P_1 z\|_{L^2(\Omega_{T})}^2+C_\varepsilon s^2\lambda^2\int_{\Omega_T}\varphi^2|z|^2\mathrm{d}x\mathrm{d}t \right)\\
        &-C\left|s\lambda \left[ \int_{\Omega} \varphi \pt z z \mathrm{d}x \right]_{-T}^T-\dfrac{s\lambda}{2}\left[ \int_{\Omega}\pt\varphi|z|^2\mathrm{d}x
 \right]_{-T}^T  \right|+ 4\beta s\lambda\int_{\Gamma_{T}^1}\varphi|\pt z|^2 \mathrm{d}S\mathrm{d}t-8\beta s\lambda \delta\int_{\Gamma_{T}^1}\varphi|\nabla_{\Gamma} z|^2\mathrm{d}S\mathrm{d}t\\
  &+2s^3\lambda^4\int_{\Gamma_{T}^1}\varphi^3|\pt\psi|^4|z|^2\mathrm{d}S\mathrm{d}t-4\beta s^3\lambda^3\int_{\Gamma_{T}^1}\varphi^3|\pt\psi|^2|z|^2\mathrm{d}S\mathrm{d}t\\
  &-8\beta\left|s\lambda \left[ \int_{\Gamma^1} \varphi \pt z z \mathrm{d}S \right]_{-T}^T-\dfrac{s\lambda}{2}\left[ \int_{\Gamma^1}\pt\varphi|z|^2\mathrm{d}S
 \right]_{-T}^T  \right|\\
&-8\beta \left( \varepsilon\|P_{1\Gamma} z\|_{L^2(\Gamma_{T}^1)}^2+C_{\varepsilon}s^2\lambda^2\int_{\Gamma_{T}^1}\varphi^2|z|^2\mathrm{d}S\mathrm{d}t +\dfrac{3}{2}s^3\lambda^3\int_{\Gamma_{T}^1}|\pt\psi|^2\varphi^3|z|^2\mathrm{d}S\mathrm{d}t\right).
    \end{split}
\end{align*}
By $b(\psi)=|\pt\psi|^2-d|\nabla\psi|^2$ and $b(\psi)\leq \eta |\nabla\psi|^2$ in $\Omega_{T}^{\eta}$, we obtain $d^2\rho|\nabla\psi|^2-\beta|\pt\psi|^2\geq [d^2\rho-\beta(\eta+d)]|\nabla\psi|^2.$
Hence, 
\begin{align*}
&\mathcal{J}\geq 2(\rho d-\beta )s\lambda\int_{\Omega_T}\varphi \left( |\pt z|^2+d|\nabla z|^2 \right)\mathrm{d}x\mathrm{d}t+2s^3\lambda^4\eta^2\int_{\Omega_{T}\setminus\Omega_{T}^\eta}\varphi^3 |\nabla \psi|^2|z|^2\mathrm{d}x\mathrm{d}t\\
&+4[d^2\rho-\beta(\eta+d)]s^3\lambda^3\int_{\Omega_{T}^\eta}\varphi^3|z|^2|\nabla\psi|^2\mathrm{d}x\mathrm{d}t-C\left( \eta s^3\lambda^3\int_{\Omega_{T}^\eta}\varphi^3 |z|^2\mathrm{d}x\mathrm{d}t\right.\\&\left.+s^3\lambda^3\int_{\Omega_{T}\setminus\Omega_{T}^\eta}\varphi^3 |z|^2\mathrm{d}x\mathrm{d}t+\varepsilon\|P_1 z\|_{L^2(\Omega_{T})}^2+C_\varepsilon s^2\lambda^2\int_{\Omega_T}\varphi^2|z|^2\mathrm{d}x\mathrm{d}t \right)\\
&-C\left|s\lambda \left[ \int_{\Omega} \varphi \pt z z \mathrm{d}x \right]_{-T}^T-\dfrac{s\lambda}{2}\left[ \int_{\Omega}\pt\varphi|z|^2\mathrm{d}x\right]_{-T}^T  \right|\\
&+2s^3\lambda^4\int_{\Gamma_{T}^1}\varphi^3|\pt\psi|^4|z|^2\mathrm{d}S\mathrm{d}t-4\beta s^3\lambda^3\int_{\Gamma_{T}^1}\varphi^3|\pt\psi|^2|z|^2\mathrm{d}S\mathrm{d}t+ 4\beta s\lambda\int_{\Gamma_{T}^1}\varphi|\pt z|^2 \mathrm{d}S\mathrm{d}t\\
&-8\beta s\lambda \delta\int_{\Gamma_{T}^1}\varphi|\nabla_{\Gamma} z|^2\mathrm{d}S\mathrm{d}t+2s^3\lambda^4\int_{\Gamma_{T}^1}\varphi^3|\pt\psi|^4|z|^2\mathrm{d}S\mathrm{d}t-4\beta s^3\lambda^3\int_{\Gamma_{T}^1}\varphi^3|\pt\psi|^2|z|^2\mathrm{d}S\mathrm{d}t\\
  &-8\beta\left|s\lambda \left[ \int_{\Gamma^1} \varphi \pt z z \mathrm{d}S \right]_{-T}^T-\dfrac{s\lambda}{2}\left[ \int_{\Gamma^1}\pt\varphi|z|^2\mathrm{d}S
 \right]_{-T}^T  \right|\\
&-8\beta \left( \varepsilon\|P_{1\Gamma} z\|_{L^2(\Gamma_{T}^1)}^2+C_{\varepsilon}s^2\lambda^2\int_{\Gamma_{T}^1}\varphi^2|z|^2\mathrm{d}S\mathrm{d}t +\dfrac{3}{2}s^3\lambda^3\int_{\Gamma_{T}^1}|\pt\psi|^2\varphi^3|z|^2\mathrm{d}S\mathrm{d}t\right).
\end{align*}
Since $|\nabla\psi|>0$ in $\overline{\Omega}$, then there exists a constant $C_{2}>0$ such that $|\nabla \psi|^2 \geq C_{2}$. By using the fact that $\beta d+\beta \eta<\rho d^2$, we obtain
\begin{align*}
        &\mathcal{J}\geq \dfrac{2\beta\eta}{d}s\lambda\int_{\Omega_T}\varphi \left( |\pt z|^2+d|\nabla z|^2 \right)\mathrm{d}x\mathrm{d}t+(2\lambda\eta^2C_2-C)s^3\lambda^3\int_{\Omega_{T}\setminus\Omega_{T}^\eta}\varphi^3 |z|^2\mathrm{d}x\mathrm{d}t\\
        &\hspace{-0.7cm} +\left[4C_{2}(d^2\rho-\beta(\eta+d))-\eta C\right] s^3\lambda^3\int_{\Omega_{T}^\eta}\varphi^3|z|^2\mathrm{d}x\mathrm{d}t-C\left( \varepsilon\|P_1 z\|_{L^2(\Omega_{T})}^2+C_\varepsilon s^2\lambda^2\int_{\Omega_T}\varphi^2|z|^2\mathrm{d}x\mathrm{d}t \right)\\
        &-C\left|s\lambda \left[ \int_{\Omega} \varphi \pt z z \mathrm{d}x \right]_{-T}^T-\dfrac{s\lambda}{2}\left[ \int_{\Omega}\pt\varphi|z|^2\mathrm{d}x
 \right]_{-T}^T  \right|+4\beta s\lambda\int_{\Gamma_{T}^1}\varphi |\pt z|^2\mathrm{d}S\mathrm{d}t\\
&-8\beta s\lambda \delta\int_{\Gamma_{T}^1}\varphi|\nabla_{\Gamma} z|^2\mathrm{d}S\mathrm{d}t+s^3\lambda^3\int_{\Gamma_{T}^1}\varphi^3|\pt\psi|^2(2\lambda |\pt \psi|^2-16\beta)|z|^2\mathrm{d}S\mathrm{d}t\\
&-8\beta\left|s\lambda \left[ \int_{\Gamma^1} \varphi \pt z z \mathrm{d}S \right]_{-T}^T-\dfrac{s\lambda}{2}\left[ \int_{\Gamma^1}\pt\varphi|z|^2\mathrm{d}S
 \right]_{-T}^T  \right| 
-8\beta \left( \varepsilon\|P_{1\Gamma} z\|_{L^2(\Gamma_{T}^1)}^2+C_{\varepsilon}s^2\lambda^2\int_{\Gamma_{T}^1}\varphi^2|z|^2\mathrm{d}S\mathrm{d}t \right).
\end{align*}
Choosing $\varepsilon \le\min\left(\dfrac{1}{2C},\dfrac{1}{16\beta}\right)$, taking small $\eta$ and large $\lambda\geq\lambda_1$, we deduce
\begin{align}\label{Res-3}
        &\mathcal{J}\geq C s\lambda \int_{\Omega_T} \varphi\left( |\pt z|^2+|\nabla z|^2+\varphi^2s^2\lambda^2|z|^2 \right)\mathrm{d}x\mathrm{d}t+C s \lambda\int_{\Gamma_{T}^1}\varphi |\pt z|^2\mathrm{d}S\mathrm{d}t \notag\\
        &-8\beta s\lambda \delta\int_{\Gamma_{T}^1}\varphi|\nabla_{\Gamma} z|^2\mathrm{d}S\mathrm{d}t-\dfrac{1}{2}\|P_1 z\|_{L^2(\Omega_{T})}^2-\dfrac{1}{2}\|P_{1\Gamma} z\|_{L^2(\Gamma_{T}^1)}^2-Cs^2\lambda^2\int_{\Gamma_{T}^1}\varphi^2|z|^2\mathrm{d}S\mathrm{d}t \notag\\ &+s^3\lambda^3\int_{\Gamma_{T}^1}\varphi^3|\pt\psi|^2(2\lambda |\pt \psi|^2-16\beta)|z|^2\mathrm{d}S\mathrm{d}t-C\left|s\lambda \left[ \int_{\Omega} \varphi \pt z z \mathrm{d}x \right]_{-T}^T-\dfrac{s\lambda}{2}\left[ \int_{\Omega}\pt\varphi|z|^2\mathrm{d}x
 \right]_{-T}^T  \right| \notag\\
 &-C\left|s\lambda \left[ \int_{\Gamma^1} \varphi \pt z z \mathrm{d}S \right]_{-T}^T-\dfrac{s\lambda}{2}\left[ \int_{\Gamma^1}\pt\varphi|z|^2\mathrm{d}S
 \right]_{-T}^T  \right|.
\end{align}
Combining \eqref{Res-3} and Young's inequality, we finally obtain \eqref{Res-3.1}.

It remains to estimate $B_0$ defined in \eqref{B0}. Owing to the fact that $\psi(x,t)=\psi_{0}(x)-\beta t^2+C_1$, we obtain $\pt\psi=-2\beta t$. Therefore, 
\begin{align}\label{Res14}
        &-s\lambda\left[ \int_{\Omega}\pt\psi\varphi|\pt z|^2 \mathrm{d}x\right]_{-T}^T-s\lambda\left[ \int_{\Gamma^1}\pt\psi \varphi|\pt z|^2\mathrm{d}S \right]_{-T}^T-ds\lambda\left[ \int_{\Omega}\varphi \pt\psi |\nabla z|^2 \mathrm{d}x\right]_{-T}^T \notag\\
        &-\delta s \lambda \left[ \int_{\Gamma^1}\varphi |\nabla_{\Gamma}z|^2\pt\psi\mathrm{d}S \right]_{-T}^T \notag\\
        =&2s\lambda\beta T\int_{\Omega} \varphi(x,T)|\pt z(x,T)|^2\mathrm{d}x+2s\lambda\beta T\int_{\Omega} \varphi(x,-T)|\pt z(x,-T)|^2\mathrm{d}x\notag\\
        &+2s\lambda\beta T\int_{\Gamma^1} \varphi(x,T)|\pt z(x,T)|^2\mathrm{d}S+2s\lambda\beta T\int_{\Omega} \varphi(x,-T)|\pt z(x,-T)|^2\mathrm{d}S \notag\\
        &+2ds\lambda\beta T\int_{\Omega}\varphi(x,T)|\nabla z(x,T)|^2\mathrm{d}x+2ds\lambda\beta T\int_{\Omega}\varphi(x,-T)|\nabla z(x,-T)|^2\mathrm{d}x \notag\\
        &+2\delta s\lambda\beta T\int_{\Gamma^1}\varphi(x,T)|\nabla_{\Gamma} z(x,T)|^2\mathrm{d}S+2\delta s\lambda\beta T\int_{\Gamma^1}\varphi(x,-T)|\nabla_{\Gamma} z(x,-T)|^2\mathrm{d}S \geq 0.
\end{align}
Using the Young inequality and the fact that $\psi\in C^4(\overline{\Omega_T})$, we obtain
\begin{align}\label{res14}
          &\left|+2sd\lambda \int_{\Omega}\varphi\pt z(x,\tau) \nabla\psi(x,\tau)\cdot\nabla z(x,\tau)\mathrm{d}x\right.
          -s\lambda \int_{\Omega}\varphi \pt z(x,\tau) z(x,\tau) (\pt^2\psi(x,\tau)-d\Delta\psi(x,\tau))\mathrm{d}x\notag\\
          &-s\lambda^2\int_{\Omega}\varphi \pt z(x,\tau) z(x,\tau)(|\pt \psi(x,\tau)|^2-d|\nabla\psi(x,\tau)|^2)\mathrm{d}x \notag\\
       &-s^3\lambda^3\int_{\Omega}\varphi(x,\tau)^3|z(x,\tau)|^2\pt\psi(x,\tau) (|\pt\psi(x,\tau)|^2-d|\nabla\psi(x,\tau)|^2)\mathrm{d}x\notag\\
       &+\dfrac{s}{2}\lambda^2\int_{\Omega}\varphi(x,\tau) |z(x,\tau)|^2\pt\psi(x,\tau) (\pt^2\psi(x,\tau)+\lambda |\pt\psi(x,\tau)|^2) \mathrm{d}x\notag\\
       &+\dfrac{s}{2}\lambda\int_{\Omega}\varphi(x,\tau) |z(x,\tau)|^2(\pt^3\psi(x,\tau)+2\lambda\pt\psi(x,\tau)\pt^2\psi(x,\tau)) \mathrm{d}x\notag\\
       &-s\lambda \int_{\Gamma^1}\varphi(x,\tau) \pt z(x,\tau) z(x,\tau) (\pt^2\psi(x,\tau)+d\pnu\psi(x,\tau))\mathrm{d}S\notag\\
       &-s\lambda^2 \int_{\Gamma^1} \varphi (x,\tau)\pt z(x,\tau) z(x,\tau)|\pt\psi(x,\tau)|^2 \mathrm{d}S-s^3\lambda^3 \int_{\Gamma^1}\varphi(x,\tau)^3 |z|^2 (\pt\psi(x,\tau))^3 \mathrm{d}S\notag\\
       &\left.+\dfrac{s}{2}\lambda^2\int_{\Gamma^1}\varphi(x,\tau) |z|^2 \pt\psi(x,\tau)(3\pt^2\psi(x,\tau)+\lambda|\pt\psi(x,\tau)|^2)\mathrm{d}S \right|\notag\\
       \leq & Cs\lambda\int_{\Omega} \varphi(x,\tau)|\pt z(x,\tau)|^2\mathrm{d}x+Cs\lambda\int_{\Omega} \varphi(x,\tau)|\nabla z(x,\tau)|^2\mathrm{d}x+Cs^3\lambda^3\int_{\Omega} \varphi^3(x,\tau)|z(x,\tau)|^2\mathrm{d}x\notag\\
       &+Cs\lambda \int_{\Gamma^1}\varphi(x,\tau)|\pt z(x,\tau)|^2\mathrm{d}S+Cs^3\lambda^3\int_{\Gamma^1}\varphi^3(x,\tau)|z(x,\tau)|^2\mathrm{d}S
\end{align}
for $\tau=-T,T$ and $\lambda$ large enough. On the other hand, by \eqref{Inclu}, we obtain
\begin{align}\label{result14}
   & -d^2s\lambda\int_{\Gamma_{T}}\varphi\pnu\psi|\pnu z|^2\mathrm{d}S\mathrm{d}t\notag\\
    &= -d^2s\lambda\int_{\Gamma_{T}^1}\varphi\pnu\psi|\pnu z|^2\mathrm{d}S\mathrm{d}t-d^2s\lambda\int_{\Gamma^0\backslash\gamma\times (-T,T)}\varphi\pnu\psi|\pnu z|^2\mathrm{d}S\mathrm{d}t -d^2s\lambda\int_{\gamma_{T}}\varphi\pnu\psi|\pnu z|^2\mathrm{d}S\mathrm{d}t\notag\\
    &\geq-d^2s\lambda\int_{\Gamma_{T}^1}\varphi\pnu\psi|\pnu z|^2\mathrm{d}S\mathrm{d}t-d^2s\lambda\int_{\gamma_{T}}\varphi\pnu\psi|\pnu z|^2\mathrm{d}S\mathrm{d}t.
\end{align}
Gathering the estimates \eqref{Res14}-\eqref{result14} and using the fact that $\psi\in C^4(\overline{\Omega_T})$, $\pnu\psi<0$ on $\Gamma^1$, and $|\nabla \psi|>0$ in $\overline{\Omega}$, we obtain
\begin{align}\label{Res15}
        B_0\geq&\, C s\lambda\int_{\Gamma_{T}^1}\varphi|\pnu z|^2\mathrm{d}S\mathrm{d}t-Cs\lambda\int_{\gamma_{T}}\varphi|\pnu z|^2\mathrm{d}S\mathrm{d}t+d(d-\delta)s\lambda \int_{\Gamma_{T}^1}\varphi\pnu\psi|\nabla_\Gamma z|^2\mathrm{d}S\mathrm{d}t\notag\\
        & \hspace{-1cm} +C_{*}s^3\lambda^3\int_{\Gamma_{T}^1}\varphi^3|z|^2\mathrm{d}S\mathrm{d}t-Cs\lambda^2\int_{\Gamma_{T}^1}\varphi |z|^2\mathrm{d}S\mathrm{d}t-Cs\lambda\int_{\Gamma_{T}^1}\varphi |z|^2\mathrm{d}S\mathrm{d}t\notag\\
        &\hspace{-1cm} -ds\lambda \int_{\Gamma_{T}^1}z\pnu z \varphi \left( d\pnu\psi+d\Delta\psi+d\lambda|\nabla\psi|^2 \right) \mathrm{d}S\mathrm{d}t- Cs\lambda\int_{\Omega} \varphi(x,T)|\pt z(x,T)|^2\mathrm{d}x\notag\\
        &\hspace{-1cm} - Cs\lambda\int_{\Omega} \varphi(x,-T)|\pt z(x,-T)|^2\mathrm{d}x-Cs\lambda\int_{\Omega} \varphi(x,T)|\nabla z(x,T)|^2\mathrm{d}x-Cs\lambda\int_{\Omega} \varphi(x,-T)|\nabla z(x,-T)|^2\mathrm{d}x\notag\\
        &\hspace{-1cm} -Cs^3\lambda^3\int_{\Omega} \varphi^3(x,T)|z(x,T)|^2\mathrm{d}x-Cs^3\lambda^3\int_{\Omega} \varphi^3(x,-T)|z(x,-T)|^2\mathrm{d}x-Cs\lambda \int_{\Gamma^1}\varphi(x,T)|\pt z(x,T)|^2\mathrm{d}S\notag\\
        &\hspace{-1cm}  -Cs\lambda \int_{\Gamma^1}\varphi(x,-T)|\pt z(x,-T)|^2\mathrm{d}S-Cs^3\lambda^3\int_{\Gamma^1}\varphi^3(x,T)|z(x,T)|^2\mathrm{d}S\notag\\
        &\hspace{-0cm} -Cs^3\lambda^3\int_{\Gamma^1}\varphi^3(x,-T)|z(x,-T)|^2\mathrm{d}S.
\end{align}
Furthermore, using the Young inequality and the fact that $\psi\in C^4(\overline{\Omega_T})$, we obtain
    \begin{align}\label{Res16}
            \left|d^2 s\lambda \int_{\Gamma_{T}^1}z\pnu z \varphi \left( \pnu\psi+\Delta\psi+\lambda|\nabla\psi|^2 \right)\mathrm{d}S\mathrm{d}t  \right|
            \leq Cs^2 \lambda^3 \int_{\Gamma_{T}^1}\varphi |z|^2\mathrm{d}S\mathrm{d}t+C\lambda \int_{\Gamma_{T}^1}\varphi |\pnu z|^2\mathrm{d}S\mathrm{d}t
    \end{align}
for all $\lambda \geq \lambda_1$. Therefore, using \eqref{Res15} and \eqref{Res16}, we obtain
    \begin{align}\label{Res17}
            B_0\geq& C s\lambda\int_{\Gamma_{T}^1}\varphi|\pnu z|^2\mathrm{d}S\mathrm{d}t-C\lambda\int_{\Gamma_{T}^1}\varphi |\pnu z|^2\mathrm{d}S\mathrm{d}t -C s\lambda \int_{\gamma_{T}}\varphi|\pnu z|^2\mathrm{d}S\mathrm{d}t \notag\\
            &\hspace{-1cm} +C_{*} s^3\lambda^3\int_{\Gamma_{T}^1}\varphi^3 |z|^2\mathrm{d}S\mathrm{d}t-Cs^2\lambda^3 \int_{\Gamma_{T}^1}\varphi |z|^2\mathrm{d}S\mathrm{d}t+d(\delta-d)s\lambda\int_{\Gamma_{T}^1}|\pnu \psi|\varphi|\nabla_{\Gamma}z|^2\mathrm{d}S \mathrm{d}t \notag\\
            &\hspace{-1cm} - Cs\lambda\int_{\Omega} \varphi(x,T)|\pt z(x,T)|^2\mathrm{d}x- Cs\lambda\int_{\Omega} \varphi(x,-T)|\pt z(x,-T)|^2\mathrm{d}x-Cs\lambda\int_{\Omega} \varphi(x,T)|\nabla z(x,T)|^2\mathrm{d}x \notag\\
        &\hspace{-1cm} -Cs\lambda\int_{\Omega} \varphi(x,-T)|\nabla z(x,-T)|^2\mathrm{d}x-Cs^3\lambda^3\int_{\Omega} \varphi^3(x,T)|z(x,T)|^2\mathrm{d}x-Cs^3\lambda^3\int_{\Omega} \varphi^3(x,-T)|z(x,-T)|^2\mathrm{d}x \notag\\
        &\hspace{-1cm}  -Cs\lambda \int_{\Gamma^1}\varphi(x,T)|\pt z(x,T)|^2\mathrm{d}S-Cs\lambda \int_{\Gamma^1}\varphi(x,-T)|\pt z(x,-T)|^2\mathrm{d}S-Cs^3\lambda^3\int_{\Gamma^1}\varphi^3(x,T)|z(x,T)|^2\mathrm{d}S\notag\\
        &-Cs^3\lambda^3\int_{\Gamma^1}\varphi^3(x,-T)|z(x,-T)|^2\mathrm{d}S 
    \end{align}
for large $\lambda\geq \lambda_1$ and large $s\geq s_1$. On the other hand, we have
    \[ C s^2\lambda^3\int_{\Gamma_{T}^1} \varphi|z|^2\mathrm{d}S\mathrm{d}t\leq \dfrac{C_{*}s^3\lambda^3}{2}\int_{\Gamma_{T}^1}\varphi^3 |z|^2\mathrm{d}S\mathrm{d}t \quad \text{and} \quad 
    C\lambda\int_{\Gamma_{T}^1}\varphi|\pnu z|^2\mathrm{d}S\mathrm{d}t \leq \dfrac{Cs\lambda}{2}\int_{\Gamma_{T}^1}\varphi |\pnu z|^2\mathrm{d}S\mathrm{d}t. \]
    Hence,
    \begin{align}\label{Res18}
s^3\lambda^3\int_{\Gamma_{T}^1}\varphi^3|z|^2\mathrm{d}S\mathrm{d}t-s^2\lambda^3\int_{\Gamma_{T}^1}\varphi^3|z|^2\mathrm{d}S\mathrm{d}t\geq \dfrac{s^3\lambda^3}{2}\int_{\Gamma_{T}^1}\varphi^3|z|^2\mathrm{d}S\mathrm{d}t,
    \end{align}
    and
    \begin{align}\label{Res19}
        s\lambda \int_{\Gamma_{T}^1}\varphi |\pnu z|^2\mathrm{d}S\mathrm{d}t-\lambda \int_{\Gamma_{T}^1}\varphi |\pnu z|^2\mathrm{d}S\mathrm{d}t \geq \dfrac{s\lambda}{2}\int_{\Gamma_{T}^1}\varphi |\pnu z|^2\mathrm{d}S\mathrm{d}t.
    \end{align}
    By adding up \eqref{Res17}-\eqref{Res19},  we conclude that
    \begin{align}\label{Res20}
            B_0\geq& Cs\lambda \int_{\Gamma_{T}^1}\varphi |\pnu z|^2\mathrm{d}S\mathrm{d}t-Cs\lambda\int_{\gamma_{T}}\varphi|\pnu z|^2\mathrm{d}S\mathrm{d}t+C_3s^3\lambda^3\int_{\Gamma_{T}^1}\varphi^3|z|^2\mathrm{d}S\mathrm{d}t  \notag\\
            &+C^\prime d(\delta-d)s\lambda\int_{\Gamma_{T}^1}\varphi|\nabla_{\Gamma}z|^2\mathrm{d}S\mathrm{d}t-Cs\lambda\int_{\Omega}\varphi(x,T)(|\pt z(x,T)|^2+|\nabla z(x,T)|^2)\mathrm{d}x  \notag\\
            &-Cs\lambda\int_{\Omega}\varphi(x,-T)(|\pt z(x,-T)|^2+|\nabla z(x,-T)|^2)\mathrm{d}x-Cs^3\lambda^3\int_{\Omega}\varphi^3(x,T)|z(x,T)|^2\mathrm{d}x  \notag\\
            &-C s^3\lambda^3\int_{\Omega}\varphi^3(x,-T)|z(x,-T)|^2\mathrm{d}x-Cs\lambda \int_{\Gamma^1}\varphi(x,T)|\pt z(x,T)|^2\mathrm{d}S  \notag\\
            &-Cs\lambda \int_{\Gamma^1}\varphi(x,-T)|\pt z(x,-T)|^2\mathrm{d}S-Cs^3\lambda^3\int_{\Gamma^1}\varphi^3(x,T)|z(x,T)|^2\mathrm{d}S  \notag\\
        & -Cs^3\lambda^3\int_{\Gamma^1}\varphi^3(x,-T)|z(x,-T)|^2\mathrm{d}S,
    \end{align}
    where $C_3=\dfrac{C_*}{2}$ and $C^\prime$ is defined in \eqref{Inclu}. Combining \eqref{Res-3.1} and \eqref{Res20}, we obtain
    \begin{align}\label{Res21}
           &\left\langle P_{1}z,P_{2}z\right\rangle_{L^2(\Omega_T)}+\left\langle P_{1\Gamma}z,P_{2\Gamma}z \right\rangle_{L^2(\Gamma_{T}^1)}  \notag\\
           \geq& Cs\lambda\int_{\Omega_T}\varphi\left( |\pt z|^2+|\nabla z|^2 \right)\mathrm{d}x\mathrm{d}t+C s^3\lambda^3\int_{\Omega_T}\varphi^3 |z|^2\mathrm{d}x\mathrm{d}t+[C^\prime d(\delta-d)-8\beta \delta]s\lambda \int_{\Gamma_{T}^1}\varphi|\nabla_{\Gamma}z|^2\mathrm{d}S\mathrm{d}t \notag\\
           &\hspace{-0.5cm} +C_{3}s^3\lambda^3\int_{\Gamma_{T}^1}\varphi^3|z|^2\mathrm{d}S\mathrm{d}t+Cs\lambda\int_{\Gamma_{T}^1}\varphi |\pnu z|^2\mathrm{d}S\mathrm{d}t-Cs\lambda\int_{\gamma_{T}}\varphi |\pnu z|^2\mathrm{d}S\mathrm{d}t+C s\lambda\int_{\Gamma_{T}^1}\varphi |\pt z|^2\mathrm{d}S\mathrm{d}t \notag\\
           &\hspace{-0.5cm} -2(d\rho+\beta)\dfrac{\lambda}{2}\int_{\Gamma_{T}^1}\pnu\psi |z|^2\mathrm{d}S\mathrm{d}t-2(3\beta-d\rho)\int_{\Gamma_{T}^1}\varphi z \pnu z\mathrm{d}S\mathrm{d}t\notag\\
           &\hspace{-0.5cm} -\dfrac{1}{2}\|P_{1}z\|_{L^2(\Omega_{T})}^2 -\dfrac{1}{2}\|P_{1\Gamma} z\|_{L^2(\Gamma_{T}^1)}^2+s^3\lambda^3\int_{\Gamma_{T}^1}\varphi^3|\pt\psi|^2(2\lambda |\pt \psi|^2-16\beta)|z|^2\mathrm{d}S\mathrm{d}t\notag\\
           &\hspace{-0.5cm}-Cs\lambda\int_{\Omega}\varphi(x,T)(|\pt z(x,T)|^2+|\nabla z(x,T)|^2)\mathrm{d}x-Cs^3\lambda^3\int_{\Omega}\varphi^3(x,T)|z(x,T)|^2\mathrm{d}x \notag\\
           &\hspace{-0.5cm} -Cs\lambda\int_{\Omega}\varphi(x,-T)(|\pt z(x,-T)|^2+|\nabla z(x,-T)|^2)\mathrm{d}x-C s^3\lambda^3\int_{\Omega}\varphi^3(x,-T)|z(x,-T)|^2\mathrm{d}x \notag\\
           &\hspace{-0.5cm} -Cs\lambda \int_{\Gamma^1}\varphi(x,T)|\pt z(x,T)|^2\mathrm{d}S-Cs\lambda \int_{\Gamma^1}\varphi(x,-T)|\pt z(x,-T)|^2\mathrm{d}S \notag\\
           &\hspace{-0cm}  -Cs^3\lambda^3\int_{\Gamma^1}\varphi^3(x,T)|z(x,T)|^2\mathrm{d}S-Cs^3\lambda^3\int_{\Gamma^1}\varphi^3(x,-T)|z(x,-T)|^2\mathrm{d}S
    \end{align}
for $\lambda\geq \lambda_1$ and $s\geq s_1$. Since $\pnu\psi<0$ on $\Gamma^1$, then $\displaystyle -2(d\rho+\beta)\dfrac{\lambda}{2}\int_{\Gamma_{T}^1}\pnu\psi |z|^2\mathrm{d}S\mathrm{d}t\geq 0$.\\
By using Young's inequality, we obtain
    \begin{align}\label{Res23}
        \left| 2(3\beta-d\rho)\int_{\Gamma_{T}^1}\varphi z \pnu z\mathrm{d}S\mathrm{d}t \right|\leq C\int_{\Gamma_{T}^1}\varphi |z|^2\mathrm{d}S\mathrm{d}t+C\int_{\Gamma_{T}^1}\varphi |\pnu z|^2\mathrm{d}S\mathrm{d}t.
    \end{align}
The next term is non-classical and somewhat tricky because of the powers of $s$ and $\lambda$. For any $\varepsilon>0$, we have
\begin{align*}
&s^3\lambda^3\int_{\Gamma_{T}^1}\varphi^3|\pt\psi|^2(2\lambda |\pt \psi|^2-16\beta)|z|^2\mathrm{d}S\mathrm{d}t\\
=&s^3\lambda^3\int_{\Gamma^1\times\left((-T,-\varepsilon)\cup(\varepsilon,T)\right) }\varphi^3|\pt\psi|^2(8\lambda\beta^2 t^2-16\beta)|z|^2\mathrm{d}S\mathrm{d}t+2s^3\lambda^4\int_{\Gamma^1\times[-\varepsilon,\varepsilon]}\varphi^3|\pt\psi|^4|z|^2\mathrm{d}S\mathrm{d}t\\
&-64s^3\lambda^3\beta^3\int_{\Gamma^1\times[-\varepsilon,\varepsilon]}\varphi^3 t^2|z|^2\mathrm{d}S\mathrm{d}t\\
&\geq 8s^3\lambda^3\beta(\lambda \beta\varepsilon^2-2)\int_{\Gamma^1\times\left((-T,-\varepsilon)\cup(\varepsilon,T)\right) }\varphi^3|\pt\psi|^2|z|^2\mathrm{d}S\mathrm{d}t-64s^3\lambda^3\varepsilon^2\beta^3\int_{\Gamma_{T}^1}\varphi^3|z|^2\mathrm{d}S\mathrm{d}t.
\end{align*}
By choosing $\varepsilon=\dfrac{\sqrt{C_3}}{8\sqrt{2}\beta^\frac{3}{2}}$ and for $\lambda\geq \dfrac{256\beta^2}{C_3}$, the first term is nonnegative and we obtain
\begin{align}\label{Res233}
        s^3\lambda^3\int_{\Gamma_{T}^1}\varphi^3|\pt\psi|^2(2\lambda |\pt \psi|^2-16\beta)|z|^2\mathrm{d}S\mathrm{d}t\geq-\dfrac{C_3}{2}s^3\lambda^3\int_{\Gamma_{T}^1}\varphi^3|z|^2\mathrm{d}S\mathrm{d}t.
    \end{align}
    From \eqref{Res21}-\eqref{Res233}, and for large $\lambda\geq \lambda_1$ and $s\geq s_1$, we obtain
\begin{align*}
            &2\left\langle P_1 z,P_2 z\right\rangle_{L^2(\Omega_T)}+2\left\langle P_{\Gamma}z,P_{2\Gamma}z \right\rangle_{L^2(\Gamma_{T}^1)}\\
           \geq& Cs\lambda\int_{\Omega_T}\varphi\left( |\pt z|^2+|\nabla z|^2 \right)\mathrm{d}x\mathrm{d}t+C s^3\lambda^3\int_{\Omega_T}\varphi^3 |z|^2\mathrm{d}x\mathrm{d}t+[C^\prime d(\delta-d)-8\beta\delta]s\lambda \int_{\Gamma_{T}^1}\varphi|\nabla_{\Gamma}z|^2\mathrm{d}S\mathrm{d}t\\
           &\hspace{-0.5cm} +Cs^3\lambda^3\int_{\Gamma_{T}^1}\varphi^3|z|^2\mathrm{d}S\mathrm{d}t+Cs\lambda\int_{\Gamma_{T}^1}\varphi |\pt z|^2\mathrm{d}S\mathrm{d}t+Cs\lambda\int_{\Gamma_{T}^1}\varphi |\pnu z|^2\mathrm{d}S\mathrm{d}t-Cs\lambda\int_{\gamma_{T}}\varphi |\pnu z|^2\mathrm{d}S\mathrm{d}t\\
           &\hspace{-0.5cm} -\|P_{1}z\|_{L^2(\Omega_{T})}^2-\|P_{1\Gamma} z\|_{L^2(\Gamma_{T}^1)}^2-Cs\lambda\int_{\Omega}\varphi(x,T)(|\pt z(x,T)|^2+|\nabla z(x,T)|^2)\mathrm{d}x\\
           &\hspace{-0.5cm}-Cs^3\lambda^3\int_{\Omega}\varphi^3(x,T)|z(x,T)|^2\mathrm{d}x-Cs\lambda\int_{\Omega}\varphi(x,-T)(|\pt z(x,-T)|^2+|\nabla z(x,-T)|^2)\mathrm{d}x\\
            &\hspace{-0.5cm} -C s^3\lambda^3\int_{\Omega}\varphi^3(x,-T)|z(x,-T)|^2\mathrm{d}x-Cs\lambda \int_{\Gamma^1}\varphi(x,T)|\pt z(x,T)|^2\mathrm{d}S\\
            &\hspace{-0.5cm} -Cs\lambda \int_{\Gamma^1}\varphi(x,-T)|\pt z(x,-T)|^2\mathrm{d}S -Cs^3\lambda^3\int_{\Gamma^1}\varphi^3(x,T)|z(x,T)|^2\mathrm{d}S -Cs^3\lambda^3\int_{\Gamma^1}\varphi^3(x,-T)|z(x,-T)|^2\mathrm{d}S.
\end{align*}
Finally, we obtain the Carleman estimate \eqref{carleman} with $z$ instead of $y$. We can come back to the original variable $y=e^{-s\varphi} z$ by using \eqref{after:conjug} and standard estimates. This completes the proof of Theorem \ref{Thm:Carleman}.

\section{Lipschitz stability for the inverse source problem}\label{sec4}
The objective of this section is to determine two unknown forcing terms $(f,g)$ (depending on $(x,t)$) in \eqref{intro:problem:01} belonging to the following admissible set
\begin{align}\label{eqas}
\mathcal{S}(C_0) :=\left\{(f,g)\in H^1\left(0, T ; \mathcal{H}\right): \begin{array}{ll}
|\pt f(x,t)| \leq C_0 |f(x,0)|, &\text{ a.e. }(x,t)\in \Omega\times (0,T) \\
|\pt g(x,t)| \leq C_0 |g(x,0)|, &\text{ a.e. }(x,t)\in \Gamma^1\times (0,T)
\end{array}\right\},
\end{align}
where $C_0 > 0$ is fixed, from the knowledge of partial measurement $\pnu y_{|\gamma\times (0,T)}.$ In general, even uniqueness fails for general source terms $(f,g)$, which justifies the introduction of the set $\mathcal{S}(C_0)$, see \cite{Is90}.

Next, we define the minimal time
\begin{equation}\label{mt}
    T_*=\dfrac{1}{\min\left(\sqrt{\rho d},\sqrt{\frac{C^\prime d(\delta-d)}{8\delta}}\right)} \left(\underset{x\in\overline{\Omega}}{\max}\,\psi_0(x)-\underset{x\in\overline{\Omega}}{\min}\,\psi_0(x)\right)^{\frac{1}{2}},
\end{equation}
where $\rho$ is the same constant in Proposition \ref{propw}. We now state the main result of the global Lipschitz stability for the inverse source problem.
\begin{theorem}\label{thm:stab}
Let $T>T_*$ and $C_0>0$. We assume that $\delta >d$. Then there exists a positive constant $C=C(\Omega, T, C_0,\| q_{\Omega} \|_{\infty},\|q_{\Gamma}\|_{\infty})$ such that for any admissible source $(f,g)\in \mathcal{S}(C_0)$, we have
    \begin{equation}
        \| f\|_{L^2(\Omega\times (0,T))}+\| g\|_{L^2(\Gamma^1\times (0,T))}\leq C\|\pt \pnu y\|_{L^2(\gamma\times (0,T))}
    \end{equation}
for any regular solution $(y,y_\Gamma)$ of system \eqref{intro:problem:01}.
\end{theorem}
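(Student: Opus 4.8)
The plan is to run a Bukhgeim--Klibanov argument on top of the Carleman estimate of Theorem~\ref{Thm:Carleman}, in the variant that dispenses with cut-off functions. Since $(f,g)\in\mathcal{S}(C_0)$ forces $|f(x,t)|\le(1+C_0T)|f(x,0)|$ and $|g(x,t)|\le(1+C_0T)|g(x,0)|$, it suffices to estimate $\|f(\cdot,0)\|_{L^2(\Omega)}+\|g(\cdot,0)\|_{L^2(\Gamma^1)}$ by the measurement. Because \eqref{carleman} is posed on $(-T,T)$ while \eqref{intro:problem:01} has vanishing Cauchy data at $t=0$, I would first extend $(y,y_\Gamma)$ and $(f,g)$ evenly in time; the identities $\pt y(\cdot,0)=0$, $\pt y_\Gamma(\cdot,0)=0$ make this extension compatible with the regularity demanded by Theorem~\ref{Thm:Carleman} after one further time differentiation, the parity constraints defining $\mathcal{S}(C_0)$ survive the extension, and the trace $\pt\pnu y$ on $\gamma\times(-T,0)$ has the same $L^2$-norm as on $\gamma\times(0,T)$.

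Put $(z,z_\Gamma):=(\pt y,\pt y_\Gamma)$. Differentiating \eqref{intro:problem:01} in $t$, the pair $(z,z_\Gamma)$ solves the same system with sources $(\pt f,\pt g)$, and evaluating the equations at $t=0$ together with $y(\cdot,0)=0$ gives $(z,z_\Gamma)(\cdot,0)=(0,0)$ and $(\pt z,\pt z_\Gamma)(\cdot,0)=(f(\cdot,0),g(\cdot,0))$. Two ingredients are then combined. First, \eqref{carleman} applied to $(z,z_\Gamma)$: its left-hand side (call it $L$) controls $s\lambda\int e^{2s\varphi}\varphi(|\pt z|^2+|\nabla z|^2)$, $s^3\lambda^3\int e^{2s\varphi}\varphi^3|z|^2$, the corresponding surface quantities, the term $s\lambda\int_{\Gamma^1_T}e^{2s\varphi}\varphi|\pnu z|^2$, and, with a \emph{positive} coefficient precisely because $\delta>d$ and $\beta<\frac{C^\prime d(\delta-d)}{8\delta}$, the term $[C^\prime d(\delta-d)-8\beta\delta]\,s\lambda\int_{\Gamma^1_T}e^{2s\varphi}\varphi|\nabla_\Gamma z_\Gamma|^2$; its right-hand side is the measurement $s\lambda\int_{\gamma_T}e^{2s\varphi}\varphi|\pt\pnu y|^2$, plus $\int e^{2s\varphi}(|\pt f|^2+|\pt g|^2)$, plus terms carried at $t=\pm T$. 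Second, a weighted energy identity: multiply the $z$-equation by $2e^{2s\varphi}\pt z$ in $\Omega$, the $z_\Gamma$-equation by $2e^{2s\varphi}\pt z_\Gamma$ on $\Gamma^1$, add, and integrate over $(0,T)$. Here the conormal boundary contributions produced by the two integrations by parts, $\mp\int_{\Gamma^1}2d\,e^{2s\varphi}\pt z_\Gamma\,\pnu z$, cancel (using $z=0$ on $\Gamma^0$ and $z=z_\Gamma$ on $\Gamma^1$), so the identity closes, and the surface term $\delta\int_{\Gamma^1}e^{2s\varphi}|\nabla_\Gamma z_\Gamma|^2$ it generates is reabsorbed into the good $|\nabla_\Gamma z_\Gamma|^2$ term of $L$. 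Since $(z,z_\Gamma)(\cdot,0)=(0,0)$ has vanishing space derivatives, the identity yields, with $D:=\int_\Omega e^{2s\varphi(\cdot,0)}|f(\cdot,0)|^2\,\d x+\int_{\Gamma^1}e^{2s\varphi(\cdot,0)}|g(\cdot,0)|^2\,\d S$,
\[
D\ \le\ E(T)\ +\ C\,L\ +\ C\int_{\Omega_T}e^{2s\varphi}|\pt f|^2\ +\ C\int_{\Gamma^1_T}e^{2s\varphi}|\pt g|^2 ,
\]
where $E(T)$ denotes the weighted energy of $(z,z_\Gamma)$ at $t=T$ (the $t=-T$ contributions are identical by the even extension).

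The three terms on the right are then absorbed in turn. (i) By $\mathcal{S}(C_0)$ and the fact that $\varphi(x,\cdot)$ has a strict maximum at $t=0$ with $1-e^{-\lambda\beta t^2}\ge c\,t^2$ uniformly in $x$, a Laplace-type bound gives $\int_{-T}^{T}e^{2s\varphi(x,t)}\,\d t\le C s^{-1/2}e^{2s\varphi(x,0)}$, whence $\int_{\Omega_T}e^{2s\varphi}|\pt f|^2\le C C_0^2 s^{-1/2}D$ and likewise for $g$, absorbable once $s$ is large (depending on $C_0$). (ii) The well-posedness energy estimate for the time-differentiated system yields $E(T)\le C e^{2s\Lambda_T}\big(\|f(\cdot,0)\|_{L^2(\Omega)}^2+\|g(\cdot,0)\|_{L^2(\Gamma^1)}^2+\|(\pt f,\pt g)\|_{L^2(0,T;\mathcal H)}^2\big)$ with $\Lambda_T:=\max_{\overline\Omega}\varphi(\cdot,T)$; using $\mathcal{S}(C_0)$ again this is $\le C e^{2s\Lambda_T}\big(\|f(\cdot,0)\|^2+\|g(\cdot,0)\|^2\big)\le C e^{2s\Lambda_T}e^{-2s\mu_0}D$ with $\mu_0:=\min_{\overline\Omega}\varphi(\cdot,0)$, and the hypothesis $T>T_*$ is exactly what permits a choice of $\beta<\min\!\big(\rho d,\frac{C^\prime d(\delta-d)}{8\delta}\big)$ with $\beta T^2>\max_{\overline\Omega}\psi_0-\min_{\overline\Omega}\psi_0$, i.e. $\Lambda_T<\mu_0$; thus $E(T)\le C e^{-2s(\mu_0-\Lambda_T)}D$ is absorbable for $s$ large, and the same estimate (up to fixed powers of $s,\lambda$) dominates the $t=\pm T$ terms on the right of \eqref{carleman}. (iii) The quantity $L$, entering with a fixed constant, is bounded through \eqref{carleman} itself by the measurement $s\lambda\int_{\gamma_T}e^{2s\varphi}\varphi|\pt\pnu y|^2\le C(s,\lambda)\|\pt\pnu y\|_{L^2(\gamma\times(0,T))}^2$ together with the quantities already handled in (i)--(ii). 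Collecting everything,
\[
D\ \le\ \big(C C_0^2 s^{-1/2}+C e^{-2s(\mu_0-\Lambda_T)}\big)\,D\ +\ C(s,\lambda)\,\|\pt\pnu y\|_{L^2(\gamma\times(0,T))}^2 ,
\]
and fixing $\lambda\ge\lambda_1$, $s\ge s_1$ large enough to make the bracket $\le\frac12$ gives $D\le C\|\pt\pnu y\|_{L^2(\gamma\times(0,T))}^2$. Then $\|f(\cdot,0)\|_{L^2(\Omega)}^2+\|g(\cdot,0)\|_{L^2(\Gamma^1)}^2\le e^{-2s\mu_0}D\le C\|\pt\pnu y\|_{L^2(\gamma\times(0,T))}^2$, and $\|f\|_{L^2(\Omega\times(0,T))}^2+\|g\|_{L^2(\Gamma^1\times(0,T))}^2\le C(1+C_0T)^2\big(\|f(\cdot,0)\|^2+\|g(\cdot,0)\|^2\big)$ finishes the proof after taking square roots.

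The step I expect to require the most care is the weighted energy identity on $\Gamma^1$: verifying that the two conormal boundary terms cancel, that the surviving surface term $\delta\int_{\Gamma^1}e^{2s\varphi}|\nabla_\Gamma z_\Gamma|^2$ is genuinely reabsorbed into $L$ (which is where $\delta>d$ and the smallness of $\beta$ are used), and that every contribution at $t=\pm T$ — both those produced by this identity and those retained in \eqref{carleman} — is controlled by $e^{-2s(\mu_0-\Lambda_T)}D$. It is this last requirement, together with the need to choose $\beta$ below $\min\!\big(\rho d,\frac{C^\prime d(\delta-d)}{8\delta}\big)$, that forces the threshold $T>T_*$.
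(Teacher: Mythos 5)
Your proposal is correct and follows essentially the same route as the paper: differentiate in time, extend evenly/oddly so the Carleman estimate of Theorem~\ref{Thm:Carleman} applies without cut-offs, combine it with the weighted energy identity pinning down $\int e^{2s\varphi(\cdot,0)}(|f(\cdot,0)|^2+|g(\cdot,0)|^2)$, absorb the $\pt f,\pt g$ terms by the decay of $e^{2s(\varphi(\cdot,t)-\varphi(\cdot,0))}$, control the $t=\pm T$ terms by an energy estimate together with the choice $\beta T^2>\max\psi_0-\min\psi_0$ allowed by $T>T_*$, and conclude via $|f(x,t)|\le(1+C_0T)|f(x,0)|$. The only differences are cosmetic (a quantitative Laplace bound in place of the paper's dominated-convergence step, and bookkeeping of the boundary energy via a single $E(T)$), so no changes are needed.
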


\begin{remark}\label{rmkddelta}
The assumption $\delta >d$ is essential to absorb the term with $|\nabla_\Gamma y_\Gamma|^2$ into the left-hand side of the Carleman estimate \eqref{carleman} and then in our stability proof. Otherwise, we need to add an extra measurement.
\end{remark}

\begin{proof}[Proof of Theorem \ref{thm:stab}]
We will extend a recent argument from \cite{HIY20} to our setting. Throughout the proof, $C$ will denote a generic constant
which is independent of $(y,y_\Gamma)$. Setting $(z,z_\Gamma)=(\pt y, \pt y_\Gamma)$, by \eqref{intro:problem:01}, we obtain 
\begin{align}
	\label{intro:problem:001}
	\begin{cases}
		\ptt z-d\Delta z + q_{\Omega}z=\pt f,&\text{ in }\Omega\times (0,T),\\
		\ptt z_\Gamma -\delta \Delta_\Gamma z_\Gamma +d\pnu z+ q_{\Gamma} z_\Gamma=\pt g, \quad z_\Gamma = z_{\mid_{\Gamma}}, &\text{ on }\Gamma^1\times(0,T) ,\\
		z=0,&\text{ on }\Gamma^0\times(0.T),\\
		(z(\cdot,0),z_\Gamma (\cdot,0))=(0,0), \quad(\pt z(\cdot,0),\pt z_\Gamma (\cdot,0))=(f(\cdot,0),g(\cdot,0)), &\text{ in }\Omega\times \Gamma^1.
	\end{cases}
\end{align}
To apply the Carleman estimate \eqref{carleman}, we extend the function $(z,z_\Gamma)$ to $(-T,T)$ by the odd extension $(z(\cdot,-t),z_{\Gamma}(\cdot,-t))=-(z(\cdot,t),z_{\Gamma}(\cdot,t))$ for $0<t<T$. Similarly for $\partial_t f$ and $\partial_{t}g$ in $(-T,0)$. Then, we extend the system \eqref{intro:problem:001} to $(-T,T)$
\begin{align*}
	\begin{cases}
		\ptt z-d\Delta z + q_{\Omega}z=\pt f,&\text{ in }\Omega_T,\\
		\ptt z_\Gamma -\delta \Delta_\Gamma z_\Gamma +d\pnu z+ q_{\Gamma} z_\Gamma=\pt g, \quad z_\Gamma = z_{\mid_{\Gamma}}, &\text{ on }\Gamma_T^1 ,\\
		z=0,&\text{ on }\Gamma^0_T,\\
		(z(\cdot,0),z_\Gamma (\cdot,0))=(0,0), \quad(\pt z(\cdot,0),\pt z_\Gamma (\cdot,0))=(f(\cdot,0),g(\cdot,0)), &\text{ in }\Omega\times \Gamma^1.
	\end{cases}
\end{align*}
Next, we apply Theorem \ref{Thm:Carleman} to $(z,z_\Gamma)$. Using the fact that $(z(\cdot,-t),z_{\Gamma}(\cdot,-t))=-(z(\cdot,t),z_{\Gamma}(\cdot,t))$ and $\varphi(\cdot,-t)=\varphi(\cdot,t)$ in $\overline{\Omega}$, we obtain
 \begin{align}\label{PrIn00}
     &\int_{0}^T\int_{\Omega} {e^{2s\varphi} \left(s^3\lambda^3\varphi^3|z|^2 +s\lambda \varphi |\nabla z|^2 +s\lambda \varphi |\pt z|^2 \right) }\mathrm{d}x \mathrm{d}t \notag\\
	&+\int_{0}^T\int_{\Gamma^1} {e^{2s\varphi} \left(s^3\lambda^3\varphi^3 |z_\Gamma|^2 + s\lambda \varphi|\pnu z|^2 + s\lambda \varphi|\pt z_\Gamma|^2 + [C^\prime d(\delta-d)-8\beta\delta]s\lambda \varphi |\nabla_\Gamma z_\Gamma|^2 \right)}\mathrm{d}S\mathrm{d}t\notag\\
	\leq & C\int_{0}^T\int_{\Omega}{e^{2s\varphi}|\pt f|^2}\mathrm{d}x\mathrm{d}t
	+ C \int_{0}^T\int_{\Gamma^{1}}{e^{2s\varphi} |\pt g|^2}\mathrm{d}S\mathrm{d}t+C s\lambda \int_{0}^T \int_{\gamma}{e^{2s\varphi} \varphi|\pnu z|^2}\mathrm{d}S\mathrm{d}t\notag\\
	&+Cs\lambda\int_{\Omega}\mathrm{e}^{2s\varphi(x,T)}\varphi(x,T)(|\pt z(x,T)|^2+|\nabla z(x,T)|^2)\mathrm{d}x\notag\\
    &+Cs^3\lambda^3\int_{\Omega}e^{2s\varphi(x,T)}\varphi^3(x,T)|z(x,T)|^2\mathrm{d}x+Cs\lambda \int_{\Gamma^1}e^{2s\varphi(x,T)}\varphi(x,T)|\pt z_{\Gamma}(x,T)|^2\mathrm{d}S\notag\\
    &+Cs^3\lambda^3\int_{\Gamma^1}e^{2s\varphi(x,T)}\varphi^3(x,T)|z_\Gamma(x,T)|^2\mathrm{d}S.
\end{align} 
Now, we estimate the term $\displaystyle\int_{\Omega}|\pt z(x,0)|^2e^{2s\varphi(x,0)}\mathrm{d}x+\int_{\Gamma^1}|\pt z_{\Gamma}(x,0)|^2e^{2s\varphi(x,0)}\mathrm{d}S$ as follows
\begin{align*}
    & \int_{\Omega}|\pt z(x,0)|^2e^{2s\varphi(x,0)}\mathrm{d}x+\int_{\Gamma^1}|\pt z_{\Gamma}(x,0)|^2e^{2s\varphi(x,0)}\mathrm{d}S\\
    =&-2\int_{0}^T \int_{\Omega}\left[ s\pt\varphi |\pt z|^2+\pt z (d\Delta z -q_{\Omega}z+\pt f)\right]e^{2s\varphi}\mathrm{d}x\mathrm{d}t+\int_{\Omega}e^{2s\varphi(x,T)}|\pt z(x,T)|^2\mathrm{d}x\\
    & \hspace{-0.5cm} -2\int_{0}^T \int_{\Gamma^1}\left[ s\pt\varphi |\pt z_\Gamma|^2+\pt z_\Gamma (\delta\Delta_{\Gamma}z_{\Gamma}-d\pnu z-q_{\Gamma}z_\Gamma+\pt g)\right]e^{2s\varphi}\mathrm{d}S\mathrm{d}t+\int_{\Gamma^1}e^{2s\varphi(x,T)}|\pt z_\Gamma(x,T)|^2\mathrm{d}S.
 \end{align*}
By using the Green formula with \eqref{estimate:der:weights:01}, the fact that $z=0$ on $\Gamma^0$, and $(z(\cdot,0),z_{\Gamma}(\cdot,0))=(0,0)$ in $\Omega \times \Gamma^1$, we obtain
\begin{align}\label{PbIn01}
  \begin{split}
    & \int_{\Omega}|\pt z(x,0)|^2e^{2s\varphi(x,0)}\mathrm{d}x+\int_{\Gamma^1}|\pt z_{\Gamma}(x,0)|^2e^{2s\varphi(x,0)}\mathrm{d}S\\
    =&-2s\lambda\int_{0}^T\int_\Omega \pt\psi \varphi |\pt z|^2 e^{2s\varphi}\mathrm{d}x \mathrm{d}t + d\int_{\Omega} |\nabla z(x,T)|^2 e^{2s\varphi(x,T)}\mathrm{d}x\mathrm{d}t-2d s\lambda\int_{0}^T\int_\Omega \pt\psi \varphi |\nabla z|^2e^{2s\varphi}\mathrm{d}x\mathrm{d}t\\
    &+4s\lambda d\int_{0}^T \int_\Omega \varphi\pt z \nabla \psi\cdot \nabla ze^{2s\varphi}\mathrm{d}x\mathrm{d}t+2\int_0^T \int_\Omega q_\Omega z\pt z e^{2s\varphi}\mathrm{d}x\mathrm{d}t-2\int_0^T \int_\Omega \pt z \pt f e^{2s\varphi}\mathrm{d}x\mathrm{d}t\\
    &+\int_\Omega |\pt z(x,T)|^2 e^{2s\varphi(x,T)}\mathrm{d}x-2s\lambda\int_{0}^T\int_{\Gamma^1} \pt\psi\varphi |\pt z|^2 e^{2s\varphi}\mathrm{d}S\mathrm{d}t+\delta\int_{\Gamma^1}|\nabla_{\Gamma }z_\Gamma(x,T)|^2e^{2s\varphi(x,T)}\mathrm{d}S\\
    &-2\delta s \lambda\int_{0}^T\int_{\Gamma^1} \pt \psi\varphi |\nabla_{\Gamma} z_{\Gamma}|^2e^{2s\varphi}\mathrm{d}S\mathrm{d}t+4s\lambda\delta\int_{0}^T \int_{\Gamma^1} \varphi\pt z \left\langle\nabla_\Gamma \psi, \nabla_\Gamma z_{\Gamma}\right\rangle_{\Gamma}e^{2s\varphi}\mathrm{d}S\mathrm{d}t\\
    &+2\int_0^T \int_{\Gamma^1} q_\Gamma z_\Gamma\pt z_\Gamma e^{2s\varphi}\mathrm{d}S\mathrm{d}t-2\int_0^T \int_{\Gamma^1} \pt z_\Gamma \pt g e^{2s\varphi}\mathrm{d}S\mathrm{d}t+\int_{\Gamma^1} |\pt z_\Gamma(x,T)|^2 e^{2s\varphi(x,T)}\mathrm{d}S.
    \end{split}
\end{align}
On the other hand, applying Young's inequality, we obtain
\begin{align}\label{Youn01}
 \begin{array}{ll}
    &|\pt z \nabla\psi\cdot\nabla z|\leq C(|\pt z|^2+|\nabla z|^2), \hspace{2.1cm} |\pt z(q_{\Omega}z-\pt f)|\leq C(|\pt z|^2+|z|^2+|\pt f|^2),\\
    &|\pt z_{\Gamma}\left\langle \nabla_{\Gamma}\psi,\nabla_{\Gamma}z_{\Gamma}\right\rangle_{\Gamma}|\leq C(|\pt z|^2+|\nabla_{\Gamma}z|^2), \qquad |\pt z_{\Gamma}(q_{\Gamma}z_{\Gamma}-\pt g)|\leq C(|\pt z|^2+|z|^2+|\pt g|^2).
     \end{array}
\end{align}
By \eqref{PbIn01} and \eqref{Youn01}, we deduce
\begin{align}\label{PbIn02}
    & \int_{\Omega}|\pt z(x,0)|^2e^{2s\varphi(x,0)}\mathrm{d}x+\int_{\Gamma^1}|\pt z_{\Gamma}(x,0)|^2e^{2s\varphi(x,0)}\mathrm{d}S\notag\\
    \leq&C\int _{0}^T\int_{\Omega}|\pt f|^2 e^{2s\varphi}\mathrm{d}x\mathrm{d}t+C\int_{0}^T\int_{\Gamma^1}|\pt g|^2 e^{2s\varphi}\mathrm{d}S\mathrm{d}t+C\int_{0}^T\int_\Omega |z|^2 e^{2s\varphi}\mathrm{d}x\mathrm{d}t\notag\\
    &+C\int_{0}^T\int_{\Gamma^1} |z_{\Gamma}|^2 e^{2s\varphi}\mathrm{d}S\mathrm{d}t+Cs\lambda \int_{0}^T\int_\Omega (|\pt z|^2+|\nabla z|^2) e^{2s\varphi}\mathrm{d}x\mathrm{d}t\notag\\
    &+Cs\lambda \int_{0}^T\int_{\Gamma^1} (|\pt z_{\Gamma}|^2+|\nabla_{\Gamma} z_{\Gamma}|^2) e^{2s\varphi}\mathrm{d}S\mathrm{d}t+C\int_\Omega (|\pt z(x,T)|^2+|\nabla z(x,T)|^2) e^{2s\varphi(x,T)}\mathrm{d}x\notag\\
    &+C\int_{\Gamma^1} (|\pt z_{\Gamma}(x,T)|^2+|\nabla_{\Gamma} z_{\Gamma}(x,T)|^2) e^{2s\varphi(x,T)}\mathrm{d}S.
   \end{align}
Since $(\pt z(\cdot,0),\pt z_\Gamma(\cdot,0))=(f(\cdot,0),g(\cdot,0))$ in $\Omega\times \Gamma$ and $(f,g)\in \mathcal{S}(C_0)$, then 
\begin{align}\label{PbIn03}
    & \int_{\Omega}|f(x,0)|^2e^{2s\varphi(x,0)}\mathrm{d}x+\int_{\Gamma^1}|g(x,0)|^2e^{2s\varphi(x,0)}\mathrm{d}S\notag\\
    \leq&C\int _{0}^T\int_{\Omega}|f(x,0)|^2 e^{2s\varphi}\mathrm{d}x\mathrm{d}t+C\int_{0}^T\int_{\Gamma^1}|g(x,0)|^2 e^{2s\varphi}\mathrm{d}S\mathrm{d}t+C\int_{0}^T\int_\Omega |z|^2 e^{2s\varphi}\mathrm{d}x\mathrm{d}t \notag\\
    &+C\int_{0}^T\int_{\Gamma^1} |z_{\Gamma}|^2 e^{2s\varphi}\mathrm{d}S\mathrm{d}t+Cs\lambda \int_{0}^T\int_\Omega (|\pt z|^2+|\nabla z|^2) e^{2s\varphi}\mathrm{d}x\mathrm{d}t\notag\\
    &+Cs\lambda \int_{0}^T\int_{\Gamma^1} (|\pt z_{\Gamma}|^2+|\nabla_{\Gamma} z_{\Gamma}|^2) e^{2s\varphi}\mathrm{d}S\mathrm{d}t+C\int_\Omega (|\pt z(x,T)|^2+|\nabla z(x,T)|^2) e^{2s\varphi(x,T)}\mathrm{d}x \notag\\
    &+C\int_{\Gamma^1} (|\pt z_{\Gamma}(x,T)|^2+|\nabla_{\Gamma} z_{\Gamma}(x,T)|^2) e^{2s\varphi(x,T)}\mathrm{d}S.
   \end{align}
   Combining \eqref{PrIn00} and \eqref{PbIn03}, we obtain
   \begin{align}\label{PbIn04}
    & \int_{\Omega}|f(x,0)|^2e^{2s\varphi(x,0)}\mathrm{d}x+\int_{\Gamma^1}|g(x,0)|^2e^{2s\varphi(x,0)}\mathrm{d}S \notag\\
    \leq&C\int _{0}^T\int_{\Omega}|f(x,0)|^2 e^{2s\varphi}\mathrm{d}x\mathrm{d}t+C\int_{0}^T\int_{\Gamma^1}|g(x,0)|^2 e^{2s\varphi}\mathrm{d}S\mathrm{d}t+Cs\lambda \int_{0}^T\int_{\gamma}\varphi |\pt \pnu y|^2e^{2s\varphi}\mathrm{d}S\mathrm{d}t \notag\\
    &\hspace{-0.3cm} +Cs\lambda \int_\Omega e^{2s\varphi(x,T)} \varphi(x,T)\left( |\pt z(x,T)|^2+|\nabla z(x,T)|^2 \right)\mathrm{d}x +Cs^3\lambda^3 \int_\Omega e^{2s\varphi(x,T)}\varphi^3(x,T)|z(x,T)|^2  \mathrm{d}x\notag\\
    & \hspace{-0.3cm}+Cs\lambda \int_{\Gamma^1} e^{2s\varphi(x,T)} \varphi(x,T) (|\pt z_{\Gamma}(x,T)|^2+|\nabla_{\Gamma} z_{\Gamma}(x,T)|^2)\mathrm{d}S  +Cs^3\lambda^3 \int_{\Gamma^1} e^{2s\varphi(x,T)}\varphi^3(x,T)|z(x,T)|^2  \mathrm{d}S
   \end{align}
for sufficiently large $\lambda$ and $s$.

On the other hand, we have $e^{-2s(\varphi(x,0)-\varphi(x,t))}=e^{-2se^{\lambda(\psi_0(x)+C_1)}(1-e^{-\lambda\beta t^2})}$. 
Since $e^{\lambda(\psi_0(x)+C_1)}>1$, then $e^{-2s(\varphi(x,0)-\varphi(x,t))}\leq e^{-2s(1-e^{-\lambda\beta t^2})}$.
Thus, 
\begin{align*}
    & C\int _{0}^T\int_{\Omega}|f(x,0)|^2 e^{2s\varphi}\mathrm{d}x\mathrm{d}t+C\int_{0}^T\int_{\Gamma^1}|g(x,0)|^2 e^{2s\varphi}\mathrm{d}S\mathrm{d}t\\
    &\leq C \int_0^T e^{-2s(1-e^{-\lambda\beta t^2})}\mathrm{d}t\left( \int_{\Omega}|f(x,0)|^2 e^{2s\varphi(x,0)}\mathrm{d}x+\int_{\Gamma^1}|g(x,0)|^2 e^{2s\varphi(x,0)}\mathrm{d}S \right).
\end{align*}
Let us fix $0< t\leq T$. Since $e^{-2s(1-e^{-\lambda\beta t^2})}\rightarrow 0$ as $s\to \infty$, we can apply the dominated convergence theorem, leading to
\begin{align}\label{PrIn05}
    \begin{split}
        &C\int _{0}^T\int_{\Omega}|f(x,0)|^2 e^{2s\varphi}\mathrm{d}x\mathrm{d}t+C\int_{0}^T\int_{\Gamma^1}|g(x,0)|^2 e^{2s\varphi}\mathrm{d}S\mathrm{d}t\\
        \leq& \dfrac{1}{2}\left( \int_{\Omega}|f(x,0)|^2 e^{2s\varphi(x,0)}\mathrm{d}x+\int_{\Gamma^1}|g(x,0)|^2 e^{2s\varphi(x,0)}\mathrm{d}S \right)
    \end{split}
\end{align}
for all large $\lambda$. Now, summing up inequalities \eqref{PbIn04} and \eqref{PrIn05}, we conclude that
\begin{align}\label{PbIn06}
& \int_{\Omega}|f(x,0)|^2e^{2s\varphi(x,0)}\mathrm{d}x+\int_{\Gamma^1}|g(x,0)|^2e^{2s\varphi(x,0)}\mathrm{d}S \notag\\
\leq &Cs\lambda \int_{0}^T\int_{\gamma}\varphi |\pt \pnu y|^2e^{2s\varphi}\mathrm{d}S\mathrm{d}t+Cs\lambda \int_\Omega e^{2s\varphi(x,T)} \varphi(x,T)\left( |\pt z(x,T)|^2+ |\nabla z(x,T)|^2 \right)\mathrm{d}x \notag\\
& \hspace{-0.5cm} +Cs^3\lambda^3 \int_\Omega e^{2s\varphi(x,T)}\varphi^3(x,T)|z(x,T)|^2  \mathrm{d}x +Cs^3\lambda^3 \int_{\Gamma^1} e^{2s\varphi(x,T)}\varphi^3(x,T)|z(x,T)|^2  \mathrm{d}S \notag\\
&\hspace{-0.5cm}+Cs\lambda \int_{\Gamma^1} e^{2s\varphi(x,T)} \varphi(x,T) (|\pt z_{\Gamma}(x,T)|^2+|\nabla_{\Gamma} z_{\Gamma}(x,T)|^2)\mathrm{d}S
\end{align}
for all large $\lambda$. Now, we prove that 
   \begin{align}\label{PbIn07}
       \begin{split}
           &\|\nabla z(\cdot,T)\|_{L^2(\Omega)}^2+\|\pt z (\cdot,T)\|_{L^2(\Omega)}^2+ \|\nabla_{\Gamma} z_{\Gamma}(\cdot,T)\|_{L^2\left(\Gamma^1\right)}^2+\|\pt z_{\Gamma} (\cdot,T)\|_{L^2(\Gamma^1)}^2\\
           \leq &C_{T}\left( \|f(\cdot,0)\|_{L^2(\Omega)}^2+\|g(\cdot,0)\|_{L^2\left(\Gamma^1\right)}^2 \right).
       \end{split}
   \end{align}
We set
\begin{align*}
\begin{split}
              I^2(\tau):=&\|\sqrt{d}\nabla z(\cdot,\tau)\|_{L^2(\Omega)}^2+\|\partial_\tau z (\cdot,\tau)\|_{L^2(\Omega)}^2+ \|\sqrt{\delta}\nabla_{\Gamma} z_{\Gamma}(\cdot,\tau)\|_{L^2\left(\Gamma^1\right)}^2+\|\partial_\tau z_{\Gamma} (\cdot,\tau)\|_{L^2(\Gamma^1)}^2\\           &+\|q_{\Omega}z(\cdot,\tau)\|_{L^2(\Omega)}^2+\|q_{\Gamma}z_{\Gamma}(\cdot,\tau)\|_{L^2(\Gamma^1)}^2, \quad\quad 0\leq \tau<T.
    \end{split}
   \end{align*}
We first compute $\dfrac{\mathrm{d}}{\mathrm{d}\tau}I^2(\tau)$. By using Green's formula and the fact that $z=0$ on $\Gamma^0$, we obtain
   \begin{align*}
       \begin{split}
           \dfrac{\mathrm{d}}{\mathrm{d}\tau}I^2(\tau)=&2\int_{\Omega}\left(\partial_\tau^2 z-d\Delta z +q_{\Omega}z \right)\partial_\tau z\mathrm{d}x+2\int_{\Gamma^1}\left( \partial_\tau^2 z_{\Gamma}-\delta\Delta_{\Gamma}z_{\Gamma} +d\pnu z +q_{\Gamma}z_{\Gamma}\right)\partial_\tau z_{\Gamma}\mathrm{d}S\\
           =&2\int_{\Omega}\partial_\tau z \partial_\tau f\mathrm{d}x+2\int_{\Gamma^1}\partial_\tau z_{\Gamma}\partial_\tau g\mathrm{d}S.
       \end{split}
   \end{align*}
By Cauchy-Schwarz inequality, we infer
   \begin{equation*}
       \dfrac{\mathrm{d}}{\mathrm{d}\tau}I^2(\tau)\leq2\|\partial_\tau z\|_{L^2(\Omega)}\|\partial_\tau f(\cdot,\tau)\|_{L^2(\Omega)}+2\|\partial_\tau z_{\Gamma}\|_{L^2\left(\Gamma^1\right)}\|\partial_\tau g(\cdot,\tau)\|_{L^2\left(\Gamma^1\right)}.
   \end{equation*}
Using the definition of $I^2(\tau)$, it is clear that $\|\partial_\tau z\|_{L^2(\Omega)}\leq I(\tau) \text{ and } \|\partial_\tau z_{\Gamma}\|_{L^2\left(\Gamma^1\right)}\leq I(\tau).$
Hence
   \begin{equation*}
       2 I(\tau)\dfrac{\mathrm{d}}{\mathrm{d}\tau}I(\tau)\leq 2 I(\tau)\left(\|\partial_\tau f(\cdot,\tau)\|_{L^2(\Omega)}+\|\partial_\tau g(\cdot,\tau)\|_{L^2\left(\Gamma^1\right)}\right).
   \end{equation*}
Therefore, since $I(\tau)\geq 0$, we infer $\dfrac{\mathrm{d}}{\mathrm{d}\tau}I(\tau)\leq \|\partial_\tau f(\cdot,\tau)\|_{L^2(\Omega)}+\|\partial_\tau g(\cdot,\tau)\|_{L^2\left(\Gamma^1\right)}.$
Integrating over $(0,t)$, we obtain
   \begin{equation*}
       I(t)\leq I(0)+\int_{0}^t \|\partial_\tau f(\cdot,\tau)\|_{L^2(\Omega)}\mathrm{d}\tau+\int_{0}^t\| \partial_\tau g(\cdot,\tau) \|_{L^2\left(\Gamma^1\right)}\mathrm{d}\tau.
       \end{equation*}
Then Young's inequality  yields
\begin{equation*}
        I^2(t)\leq 3I^2(0)+3\left( \int_{0}^t\|\partial_\tau f(\cdot,\tau)\|_{L^2(\Omega)}\mathrm{d}\tau \right)^2+3\left( \int_{0}^t \|\partial_\tau g(\cdot,\tau)\|_{L^2\left(\Gamma^1\right)} \mathrm{d}\tau\right)^2.
\end{equation*}
Since $(z(\cdot,0),z_{\Gamma}(\cdot,0))=(0,0)$ in $\Omega\times \Gamma^1$, then $I^2(0)=\| \partial_\tau z(\cdot,0) \|_{L^2(\Omega)}^2+\| \partial_\tau z_{\Gamma}(\cdot,0) \|_{L^2(\Gamma^1)}^2.$
Thus,
\begin{align*}
\begin{split}
I^2(t) &\leq 3\|f(\cdot,0)\|_{L^2(\Omega)}^2+3\|g(\cdot,0)\|_{L^2\left(\Gamma^1\right)}^2+3\int_{0}^T\|\partial_\tau f(\cdot,\tau)\|_{L^2(\Omega)}^2\mathrm{d}\tau+3 \int_{0}^T \|\partial_\tau g(\cdot,\tau)\|_{L^2\left(\Gamma^1\right)}^2\mathrm{d}\tau.
\end{split}
\end{align*}
Since $(f,g)\in \mathcal{S}(C_0)$, then $I^2(t)\leq 3(1+TC_0)\left(\|f(\cdot,0)\|_{L^2(\Omega)}^2+\|g(\cdot,0)\|_{L^2\left(\Gamma^1\right)}^2\right).$ 
This leads to the desired inequality \eqref{PbIn07}.

On the other hand, $z(\cdot,T)\in H_{0,\Gamma^0}^1(\Omega):=\left\lbrace y\in H^1(\Omega) \colon \;\;y_{\mid_{\Gamma^0}}=0 \right\rbrace$ and $\Gamma^0$ has a positive surface measure, then by the Poincaré inequality (see \cite[p.~177]{Zi89}), we obtain 
$
    \|z(\cdot,T)\|_{L^2(\Omega)}\leq C\|\nabla z(\cdot,T)\|_{L^2(\Omega)}.
$
Then along with the following trace estimate
\begin{equation*}
    \|z_{\Gamma}(\cdot,T)\|_{L^2\left(\Gamma^1\right)}^2\leq C\left(\|z(\cdot,T)\|_{L^2(\Omega)}^2+\|\nabla z(\cdot,T)\|_{L^2(\Omega)}^2\right),
\end{equation*}
we obtain
$
     \|z_{\Gamma}(\cdot,T)\|_{L^2\left(\Gamma^1\right)}^2\leq C\|\nabla z(\cdot,T)\|_{L^2(\Omega)}^2.
$
Now the previous estimates yield
\begin{align*}
    \begin{split}
        &\|z(\cdot,T)\|_{L^2(\Omega)}^2+\|\pt z(\cdot,T)\|_{L^2(\Omega)}^2 +\|\nabla z(\cdot,T)\|_{L^2(\Omega)}^2+\|z_{\Gamma}(\cdot,T)\|_{L^2\left(\Gamma^1\right)}^2+\|\pt z_{\Gamma}(\cdot,T)\|_{L^2\left(\Gamma^1\right)}^2+\|\nabla_{\Gamma}z(\cdot,T)\|_{L^2(\Gamma^1)}^2\\
        \leq &C\left(\|f(\cdot,0)\|_{L^2(\Omega)}^2+\|g(\cdot,0)\|_{L^2\left(\Gamma^1\right)}^2\right).
    \end{split}
\end{align*}
Therefore
\begin{align}\label{PbIn08}
    &Cs\lambda \int_\Omega e^{2s\varphi(x,T)} \varphi(x,T)\left( |\pt z(x,T)|^2+|\nabla z|^2 \right)\mathrm{d}x+Cs^3\lambda^3 \int_\Omega e^{2s\varphi(x,T)}\varphi^3(x,T)|z(x,T)|^2  \mathrm{d}x \notag\\
    &+Cs\lambda \int_{\Gamma^1} e^{2s\varphi(x,T)} \varphi(x,T) |\pt z_{\Gamma}(x,T)|^2\mathrm{d}S+Cs^3\lambda^3 \int_{\Gamma^1} e^{2s\varphi(x,T)}\varphi^3(x,T)|z(x,T)|^2  \mathrm{d}S \notag\\
    &\leq Cs^3\lambda^3e^{3\lambda(d_1^2-\beta T^2+C_1)} e^{2se^{3\lambda(d_1^2-\beta T^2+C_1)}}\left(\|f(\cdot,0)\|_{L^2(\Omega)}^2+\|g(\cdot,0)\|_{L^2\left(\Gamma^1\right)}^2\right),
\end{align}
where $d_1:=\underset{x\in\overline{\Omega}}{\max}\, \mu(x)$. Combining \eqref{PbIn06} and \eqref{PbIn08}, we obtain
\begin{align}\label{PbIn09}
  \begin{split}
    & \int_{\Omega}|f(x,0)|^2e^{2s\varphi(x,0)}\mathrm{d}x+\int_{\Gamma^1}|g(x,0)|^2e^{2s\varphi(x,0)}\mathrm{d}S\\
    &\leq Cs\lambda \int_{0}^T\int_{\gamma}\varphi |\pt \pnu y|^2e^{2s\varphi}\mathrm{d}S\mathrm{d}t +Cs^3\lambda^3e^{3\lambda(d_1^2-\beta T^2+C_1)} e^{2se^{\lambda(d_1^2-\beta T^2+C_1)}}\left(\|f(\cdot,0)\|_{L^2(\Omega)}^2+\|g(\cdot,0)\|_{L^2\left(\Gamma^1\right)}^2\right).
\end{split}
\end{align}
Moreover,
\begin{align}\label{PbIn10}
    \begin{split}
         & \int_{\Omega}|f(x,0)|^2e^{2s\varphi(x,0)}\mathrm{d}x+\int_{\Gamma^1}|g(x,0)|^2e^{2s\varphi(x,0)}\mathrm{d}S\\
         &=\int_{\Omega}|f(x,0)|^2e^{2se^{\lambda(\psi_0(x)+C_1)}}\mathrm{d}x+\int_{\Gamma^1}|g(x,0)|^2e^{2se^{\lambda(\psi_0(x)+C_1)}}\mathrm{d}S\\
         &\geq e^{2se^{\lambda(d_0^2+C_1)}}\left(\|f(\cdot,0)\|_{L^2(\Omega)}^2+\|g(\cdot,0)\|_{L^2\left(\Gamma^1\right)}^2\right),
    \end{split}
\end{align}
where $d_0:=\underset{x\in\overline{\Omega}}{\min}\,\mu(x)$. By \eqref{PbIn09} and \eqref{PbIn10}, we obtain
\begin{align}\label{PbIn11}
\begin{split}
&\|f(\cdot,0)\|_{L^2(\Omega)}^2+\|g(\cdot,0)\|_{L^2\left(\Gamma^1\right)}^2\\
\leq &C s\lambda e^{-2se^{\lambda(d_0^2+C_1)}} \int_{0}^T\int_{\gamma}\varphi |\pt \pnu y|^2e^{2s\varphi}\mathrm{d}S\mathrm{d}t\\
&\hspace{-0.4cm} +Cs^3\lambda^3e^{3\lambda(d_1^2-\beta T^2+C_1)} e^{2s\left(e^{\lambda(d_1^2-\beta T^2+C_1)}-e^{\lambda(d_0^2+C_1)}\right)}\left(\|f(\cdot,0)\|_{L^2(\Omega)}^2+\|g(\cdot,0)\|_{L^2\left(\Gamma^1\right)}^2\right).
\end{split}
\end{align}
On one hand, we have 
\begin{equation*}
        C s\lambda e^{-2se^{\lambda(d_0^2+C_1)}} \int_{0}^T\int_{\gamma}\varphi |\pt \pnu y|^2e^{2s\varphi}\mathrm{d}S\mathrm{d}t\leq C s\lambda e^{\lambda(d_{1}^2+C_1)}e^{-2se^{\lambda(d_0^2+C_1)}} \int_{0}^T\int_{\gamma} |\pt \pnu y|^2e^{2s\varphi}\mathrm{d}S\mathrm{d}t.
\end{equation*}
Since $\lambda e^{\lambda(d_{1}^2+C_1)}e^{-2se^{\lambda(d_0^2+C_1)}}\rightarrow 0$ as $\lambda\to \infty$, for $\lambda$ large enough, we obtain $\lambda e^{\lambda(d_{1}^2+C_1)}e^{-2se^{\lambda(d_0^2+C_1)}}\leq C.$
Therefore 
\begin{align}\label{PbIn12}
       C s\lambda e^{-2se^{\lambda(d_0^2+C_1)}} \int_{0}^T\int_{\gamma}\varphi |\pt \pnu y|^2e^{2s\varphi}\mathrm{d}S\mathrm{d}t
        \leq Ce^{Cs} \int_{0}^T\int_{\gamma} |\pt \pnu y|^2\mathrm{d}S\mathrm{d}t.
\end{align}
On the other hand, we have $e^{\lambda(d_1^2-\beta T^2+C_1)}-e^{\lambda(d_0^2+C_1)}<0.$
Indeed, we have $e^{\lambda(d_1^2-\beta T^2+C_1)}-e^{\lambda(d_0^2+C_1)}=e^{\lambda(d_0^2+C_1)}\left(e^{\lambda(d_1^2-d_0^2-\beta T^2)}-1\right).$ Since $T>\dfrac{1}{\min\left(\sqrt{\rho d},\sqrt{\frac{C^\prime d(\delta-d)}{8\delta}}\right)}\sqrt{d_1^2-d_0^2}=:T_*$, we can choose $\beta\in \left(\frac{d_1^2-d_0^2}{T^2}, \min\left(\rho d,\frac{C^\prime d(\delta-d)}{8\delta}\right)\right)$ (see Remark \ref{rmkbeta}) so that $d_1^2-d_0^2-\beta T^2<0$, and then $e^{\lambda(d_1^2-d_0^2-\beta T^2)}<1.$ Then, for a fixed $\lambda$, we obtain
\begin{equation*}
    \underset{s\rightarrow\infty}\lim s^3\lambda^3e^{3\lambda(d_1^2-\beta T^2+C_1)} e^{2s\left(e^{\lambda(d_1^2-\beta T^2+C_1)}-e^{\lambda(d_0^2+C_1)}\right)}=0.
\end{equation*}
By taking $s$ sufficiently large, we obtain
\begin{align}\label{PbIn13}
    \begin{split}
        &Cs^3\lambda^3e^{3\lambda(d_1^2-\beta T^2+C_1)} e^{2s\left(e^{\lambda(d_1^2-\beta T^2+C_1)}-e^{\lambda(d_0^2+C_1)}\right)}\left(\|f(\cdot,0)\|_{L^2(\Omega)}^2+\|g(\cdot,0)\|_{L^2\left(\Gamma^1\right)}^2\right)\\
        \leq& \dfrac{1}{2}\left(\|f(\cdot,0)\|_{L^2(\Omega)}^2+\|g(\cdot,0)\|_{L^2\left(\Gamma^1\right)}^2\right).
    \end{split}
\end{align}
Consequently \eqref{PbIn11}-\eqref{PbIn13} yield
\begin{equation}\label{PbIn14}
\|f(\cdot,0)\|_{L^2(\Omega)}^2+\|g(\cdot,0)\|_{L^2\left(\Gamma^1\right)}^2\leq Ce^{Cs} \int_{0}^T\int_{\gamma} |\pt \pnu y|^2\mathrm{d}S\mathrm{d}t.
\end{equation}
For $t\in (0,T)$, we have  $\displaystyle f(x,t)=f(x,0)+\int_{0}^t \partial_{\tau}f(x,\tau)\mathrm{d}\tau\; \text{and} \; g(x,t)=g(x,0)+\int_{0}^t \partial_{\tau}g(x,\tau)\mathrm{d}\tau.$ Since $(f,g)\in \mathcal{S}(C_0)$, then
\begin{align}\label{PbIn15}
\begin{split}
        &|f(x,t)|\leq |f(x,0)|+\int_{0}^T|\partial_{\tau}f(x,\tau)|\mathrm{d}\tau\leq (1+C_0 T)|f(x,0)|,\\
        &|g(x,t)|\leq |g(x,0)|+\int_{0}^T|\partial_{\tau}g(x,\tau)|\mathrm{d}\tau\leq (1+C_0 T)|g(x,0)|.
        \end{split}
\end{align}
Combining \eqref{PbIn14} and \eqref{PbIn15}, we obtain
\begin{equation*}
\|f\|_{L^2(\Omega\times(0,T))}^2+\|g\|_{L^2(\Gamma^1\times (0,T))}^2\leq CT(1+C_0 T)e^{Cs} \int_{0}^T\int_{\gamma} |\pt \pnu y|^2\mathrm{d}S\mathrm{d}t.
\end{equation*}
Thus, by fixing $s\geq s_1$, the proof is achieved.
\end{proof}

\section{Boundary controllability of the wave system}\label{sec5}
In this section, we briefly discuss a second application of our Carleman estimate (Theorem \ref{Thm:Carleman}) to the exact boundary controllability of the system
\begin{align}
	\label{ctrlpb}
	\begin{cases}
		\ptt y-d\Delta y + q_{\Omega}(x,t) y=0, &\text{ in } \Omega \times (0,T),\\
		\ptt y_\Gamma -\delta \Delta_\Gamma y_\Gamma + d\pnu y + q_{\Gamma}(x,t) y_\Gamma=0, \quad y_\Gamma = y_{\mid_{\Gamma}}, &\text{ on }\Gamma^1\times (0,T) ,\\
		y=\mathds{1}_\gamma v,&\text{ on }\Gamma^0\times (0,T),\\
		(y(\cdot,0),y_\Gamma (\cdot,0))=(y_0,y_{0,\Gamma}), \quad(\pt y(\cdot,0),\pt y_\Gamma (\cdot,0))=(y_1,y_{1,\Gamma}), &\text{ in }\Omega\times \Gamma^1,
	\end{cases}
\end{align}
with one control force $v\in L^2(\gamma\times (0,T))$, where $q_{\Omega} \in L^{\infty}(\Omega\times (0,T))$, $q_{\Gamma} \in L^{\infty}\left(\Gamma^1\times (0,T)\right)$, and $\gamma$ is the control region defined in \eqref{Inclu}. The solution of \eqref{ctrlpb} with less regular initial data should be understood in the transposition sense.

Simultaneously with the system \eqref{ctrlpb}, we consider its adjoint backward system
\begin{align}
	\label{adjpb}
	\begin{cases}
		\ptt z-d\Delta z + q_{\Omega}(x,t) z=0, &\text{ in } \Omega\times (0,T),\\
		\ptt z_\Gamma -\delta \Delta_\Gamma z_\Gamma + d\pnu z + q_{\Gamma}(x,t) z_\Gamma=0, \quad z_\Gamma = z_{\mid_{\Gamma}}, &\text{ on }\Gamma^1\times (0,T) ,\\
		z=0,&\text{ on }\Gamma^0\times (0,T),\\
		(z(\cdot,T),z_\Gamma (\cdot,T))=(z_0,z_{0,\Gamma}), \quad(\pt z(\cdot,T),\pt z_\Gamma (\cdot,T))=(z_1,z_{1,\Gamma}), &\text{ in }\Omega\times \Gamma^1.
	\end{cases}
\end{align}

By a standard duality argument (see e.g. \cite[Corollary 1.2]{Wa18}), the exact controllability of the system \eqref{ctrlpb} is equivalent to the observability inequality for the solution of the adjoint system \eqref{adjpb} given by the following result.
\begin{theorem}\label{thmobs} 
We assume that $T>2T_*$ and $\delta>d$. Then there exists a constant $C>0$ such that for all $(z_0,z_{0,\Gamma},z_1,z_{1,\Gamma})\in \mathcal{E}\times \mathcal{H}$, the solution of \eqref{adjpb} satisfies the observability inequality
\begin{align}
    \frac{1}{2} &\int_{\Omega}\left(\left|z_1(x)\right|^2+d|\nabla z_0(x)|^2\right) \d x +\frac{1}{2} \int_{\Gamma^1}\left(\left|z_{1,\Gamma}(x)\right|^2+\delta\left|\nabla_{\Gamma} z_{0,\Gamma}(x)\right|^2\right) \d S 
    \leq C \int_{\gamma\times (0,T)} |\pnu z|^2 \d S \d t.
\end{align}
\end{theorem}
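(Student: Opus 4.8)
The plan is to deduce the observability inequality from the Carleman estimate \eqref{carleman} combined with standard energy estimates, crucially \emph{without} introducing any cut-off in time. By a density argument it is enough to treat smooth data, so that all the boundary terms occurring below are finite. Set
\[
E(t)=\frac12\int_\Omega\bigl(|\pt z(x,t)|^2+d|\nabla z(x,t)|^2\bigr)\,\d x+\frac12\int_{\Gamma^1}\bigl(|\pt z_\Gamma(x,t)|^2+\delta|\nabla_\Gamma z_{\Gamma}(x,t)|^2\bigr)\,\d S ,
\]
so that the left-hand side of the claimed estimate is exactly $E(T)$. First I would prove the quasi-conservation of energy: multiplying the first equation of \eqref{adjpb} by $\pt z$ and the second by $\pt z_\Gamma$, integrating over $\Omega$ and $\Gamma^1$ respectively and adding, the conormal contributions $\pm d\pnu z$ cancel since $z_\Gamma=z_{|\Gamma}$ on $\Gamma^1$ and $z=0$ on $\Gamma^0$; enlarging $E$ by $\tfrac12\|z\|_{L^2(\Omega)}^2+\tfrac12\|z_\Gamma\|_{L^2(\Gamma^1)}^2$ to close the differential inequality and using $q_\Omega,q_\Gamma\in L^\infty$ together with the Poincar\'e and trace inequalities (both available because $z=0$ on $\Gamma^0$, a set of positive surface measure), Gr\"onwall's lemma yields $C^{-1}E(s)\le E(t)\le C\,E(s)$ for all $s,t\in[0,T]$; in particular $E(T)\le C\,E(t)$ for every $t$.

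Next I would bring the problem into the framework of Theorem \ref{Thm:Carleman} by a time translation: set $w(x,t):=z(x,t+\tfrac T2)$ for $t\in(-T',T')$ with $T':=T/2$, which solves on $\Omega\times(-T',T')$ the same wave system as \eqref{adjpb} with the bounded coefficients $q_\Omega(\cdot,\cdot+\tfrac T2)$, $q_\Gamma(\cdot,\cdot+\tfrac T2)$. Since $\delta>d$ and $T'>T_*$, recalling the definition \eqref{mt} of $T_*$, the open interval
\[
\Bigl((\max_{\overline\Omega}\psi_0-\min_{\overline\Omega}\psi_0)/(T')^2,\ \min\bigl(\rho d,\ C^\prime d(\delta-d)/(8\delta)\bigr)\Bigr)
\]
is nonempty; fix $\beta$ in it. Then $0<\beta<\rho d$, so the weights \eqref{def:weight:functions} are admissible, and $C^\prime d(\delta-d)-8\beta\delta>0$, so the coefficient of $|\nabla_\Gamma w_\Gamma|^2$ on the left-hand side of \eqref{carleman} is strictly positive. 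Applying Theorem \ref{Thm:Carleman} to $w$ (with $T$ replaced by $T'$ and $q_\Omega\equiv0$), the source terms are $f=\ptt w-d\Delta w=-q_\Omega(\cdot,\cdot+\tfrac T2)w$ and $g=\ptt w_\Gamma-\delta\Delta_\Gamma w_\Gamma+d\pnu w=-q_\Gamma(\cdot,\cdot+\tfrac T2)w_\Gamma$, both in $L^2$; the corresponding right-hand side terms $C\int e^{2s\varphi}(|f|^2+|g|^2)$ are then absorbed into the $s^3\lambda^3\varphi^3|w|^2$- and $s^3\lambda^3\varphi^3|w_\Gamma|^2$-terms of the left-hand side once $s$ is large (recall $\varphi\ge1$). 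One is thus left with the weighted left-hand side bounded by $Cs\lambda\int_{\gamma\times(-T',T')}e^{2s\varphi}\varphi|\pnu w|^2\,\d S\,\d t$ plus a remainder $\mathcal{B}$ collecting all the terms of \eqref{carleman} evaluated at $t=\pm T'$.

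The heart of the matter is the absorption of $\mathcal{B}$. Restricting the (nonnegative) left-hand side to the slab $|t|\le\varepsilon$, where $\varphi\ge\varphi_1:=e^{\lambda(\min_{\overline\Omega}\psi_0-\beta\varepsilon^2+C_1)}$, and using the strict positivity of the $|\nabla_\Gamma w_\Gamma|^2$-coefficient, the restricted left-hand side dominates $c\,s\lambda\varphi_1 e^{2s\varphi_1}\int_{-\varepsilon}^{\varepsilon}E_w(t)\,\d t$, where $E_w(t)=E(t+\tfrac T2)$. On the other hand, each term of $\mathcal{B}$ is a power of $s\lambda$ times $e^{2s\varphi(\cdot,\pm T')}\varphi^{k}(\cdot,\pm T')$ times a quantity controlled by $E_w(\pm T')$ (again by Poincar\'e/trace for the $|w|^2$, $|w_\Gamma|^2$ pieces), so, writing $a_1:=\max_{\overline\Omega}\psi_0-\beta(T')^2+C_1$ (so that $\varphi(\cdot,\pm T')\le e^{\lambda a_1}$), the energy comparison gives $\mathcal{B}\le C_\varepsilon\,s^3\lambda^3 e^{3\lambda a_1}e^{2s e^{\lambda a_1}}\int_{-\varepsilon}^{\varepsilon}E_w(t)\,\d t$. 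The choice of $\beta$ forces $\beta(T')^2>\max_{\overline\Omega}\psi_0-\min_{\overline\Omega}\psi_0$, hence for $\varepsilon$ small enough $\beta((T')^2-\varepsilon^2)>\max_{\overline\Omega}\psi_0-\min_{\overline\Omega}\psi_0$, i.e.\ $\varphi_1>e^{\lambda a_1}$; therefore, for $s$ large, $s\lambda\varphi_1 e^{2s\varphi_1}$ beats $s^3\lambda^3 e^{3\lambda a_1}e^{2s e^{\lambda a_1}}$ and $\mathcal{B}$ is absorbed. Bounding crudely $e^{2s\varphi}\varphi\le e^{2s\varphi_{\max}}\varphi_{\max}$ on $\gamma\times(-T',T')$ (with $\varphi_{\max}=e^{\lambda(\max_{\overline\Omega}\psi_0+C_1)}$) and using $\int_{-\varepsilon}^{\varepsilon}E_w\ge 2\varepsilon\,c\,E_w(T')=2\varepsilon\,c\,E(T)$ (energy comparison once more, since $w(\cdot,T')=z(\cdot,T)$), one obtains $E(T)\le C_s\int_{\gamma\times(-T',T')}|\pnu w|^2\,\d S\,\d t$. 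Fixing $s$ and undoing the translation ($\pnu w$ on $\gamma\times(-T',T')$ becomes $\pnu z$ on $\gamma\times(0,T)$) yields the inequality for smooth data, and the general case $(z_0,z_{0,\Gamma},z_1,z_{1,\Gamma})\in\mathcal{E}\times\mathcal{H}$ follows by density.

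I expect the main obstacle to be precisely the treatment of the time-boundary bulk and surface terms gathered in $\mathcal{B}$: since \eqref{carleman} is established without assuming vanishing Cauchy data at $t=\pm T$, these terms genuinely appear, and the only way to remove them is to combine the quasi-conservation of energy with the hypothesis $T>2T_*$, which through the admissible range of $\beta$ makes $\varphi(\cdot,\pm T/2)$ strictly smaller than $\varphi$ on a central time slab and thus permits the absorption for $s$ large. The assumption $\delta>d$ is equally indispensable: it is what keeps the tangential-gradient term on the left-hand side of the Carleman estimate, so that $\|\nabla_\Gamma z_{0,\Gamma}\|_{L^2(\Gamma^1)}$ is actually recovered in the observability inequality.
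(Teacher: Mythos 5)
Your argument is correct and is exactly the strategy the paper intends: it invokes the cut-off-free Carleman estimate of Theorem \ref{Thm:Carleman} (with $\beta$ chosen in the nonempty interval guaranteed by $T/2>T_*$ and $\delta>d$) and absorbs the $t=\pm T/2$ boundary-in-time terms via energy quasi-conservation, which is the argument of \cite[Proposition 1]{HIY20} that the paper cites while omitting the details. Aside from the lightly sketched density/hidden-regularity step needed to pass from smooth data to $(z_0,z_{0,\Gamma},z_1,z_{1,\Gamma})\in\mathcal{E}\times\mathcal{H}$, nothing essential is missing.
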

The proof of the above theorem follows from our Carleman estimate (Theorem \ref{Thm:Carleman}) by adopting the same strategy of \cite[Proposition 1]{HIY20}, so we omit the details.

\begin{remark}
Theorem \ref{thmobs} extends and improves \cite[Theorem 2.2]{BDEM22} by giving a sharp lower bound for the time required for observability (see \cite[Section 5.2]{BDEM22}). For instance, if $\Omega_1=B_{1}$, then $\psi_0(x)=|x|^2$, $C^\prime=2$, $\underset{x\in\overline{\Omega}}{\min}\,|x|^2=1$, and we can choose $\rho=1$. Hence, we obtain the observability with an explicit minimal time
$$2T_*=\frac{2}{\min\left(\sqrt{d},\sqrt{\frac{2d(\delta-d)}{8\delta}}\right)}\left(\underset{x\in\overline{\Omega}}{\max}\,|x|^2-1\right)^{\frac{1}{2}}.$$
\end{remark}

\begin{remark}\label{rmkobs}
In the case when $\delta<d$, it has been shown in \cite[Theorem 2.4]{BDEM22} that the corresponding observability inequality fails at any time $T > 0$. The assumption $\delta>d$ has also been considered in \cite{MM'2023} for the controllability of a Schr\"odinger equation with a dynamic boundary condition. See open problem 6.1 for further discussion.
\end{remark}

\begin{remark}
It should be emphasized that the strategy we used for observability gives a sharper minimal time than the one obtained by the multiplier method. We refer to \cite{FLL23} for recent results in this context.
\end{remark}

The main result of this section reads as follows.
\begin{theorem}
We assume that $T>2T_*$ and $\delta>d$. Then the system \eqref{ctrlpb} is exactly controllable in time $T$ with a control acting on $\gamma$, i.e., for all initial data $(y_0,y_{0,\Gamma},y_1,y_{1,\Gamma})\in \mathcal{H} \times \mathcal{E}^{-1}$, there exists a control function $v\in L^2(\gamma\times (0,T))$ such that the solution of system \eqref{ctrlpb} satisfies
$(y(T), y_\Gamma(T))=(0,0) \text{ and } \left(\pt y(T), \pt y_{\Gamma}(T)\right)=(0,0) \text{ in } \Omega \times \Gamma^1.$
\end{theorem}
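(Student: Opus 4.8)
The plan is to deduce the exact controllability from the observability inequality of Theorem~\ref{thmobs} by the Hilbert Uniqueness Method (HUM), combined with the transposition method to handle the low-regularity data $(y_0,y_{0,\Gamma},y_1,y_{1,\Gamma})\in\mathcal{H}\times\mathcal{E}^{-1}$. First I would record the well-posedness of the backward adjoint system \eqref{adjpb}: for terminal data $(z_0,z_{0,\Gamma},z_1,z_{1,\Gamma})\in\mathcal{E}\times\mathcal{H}$ it admits a unique solution with $(z,z_\Gamma)\in C([0,T];\mathcal{E})\cap C^1([0,T];\mathcal{H})$, by the same semigroup argument invoked for \eqref{intro:problem:01}, together with the hidden regularity $\pnu z\in L^2(\Gamma^0\times(0,T))$, proved by a standard multiplier identity adapted to the coupled bulk--surface structure. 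Dually, for $v\in L^2(\gamma\times(0,T))$ and $(y_0,y_{0,\Gamma},y_1,y_{1,\Gamma})\in\mathcal{H}\times\mathcal{E}^{-1}$, system \eqref{ctrlpb} is well posed in the transposition sense, with $(y,y_\Gamma)\in C([0,T];\mathcal{H})\cap C^1([0,T];\mathcal{E}^{-1})$, the boundary input entering the weak formulation through the pairing $\int_{\gamma\times(0,T)}v\,\pnu z\,\d S\,\d t$ coming from the Dirichlet lift on $\Gamma^0$.

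Then I would set up the HUM functional on the energy space $\mathcal{E}\times\mathcal{H}$ of the adjoint data,
\[
J(z_0,z_{0,\Gamma},z_1,z_{1,\Gamma})=\frac12\int_{\gamma\times(0,T)}|\pnu z|^2\,\d S\,\d t-\big\langle(y_1,y_{1,\Gamma}),(z_0,z_{0,\Gamma})\big\rangle+\big\langle(y_0,y_{0,\Gamma}),(z_1,z_{1,\Gamma})\big\rangle,
\]
with duality brackets taken between $\mathcal{E}^{-1}$ and $\mathcal{E}$ (resp.\ $\mathcal{H}$ and $\mathcal{H}$). By the hidden regularity the first term is continuous, strict convexity gives at most one minimizer, and by Theorem~\ref{thmobs} the quadratic part is coercive on $\mathcal{E}\times\mathcal{H}$; hence the direct method (or Lax--Milgram applied to the associated symmetric bilinear form) yields a unique minimizer $(\hat z_0,\hat z_{0,\Gamma},\hat z_1,\hat z_{1,\Gamma})$. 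Writing its Euler--Lagrange equation and integrating by parts against the solution of \eqref{ctrlpb} driven by $v:=\pnu\hat z|_{\gamma\times(0,T)}$ shows that this control steers the state to $(y(T),y_\Gamma(T))=(0,0)$ and $(\pt y(T),\pt y_\Gamma(T))=(0,0)$ in the transposition sense; this is precisely the equivalence ``observability $\Rightarrow$ null controllability'', i.e.\ \cite[Corollary 1.2]{Wa18}, whose hypotheses are the points recorded above. The time condition $T>2T_*$ is inherited verbatim from Theorem~\ref{thmobs}.

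The step needing the most care — and the main obstacle — is the transposition framework for the \emph{coupled} system: one must identify the correct duality between $\mathcal{H}\times\mathcal{E}^{-1}$ and $\mathcal{E}\times\mathcal{H}$ carrying the surface component $y_\Gamma$, verify that the control acting on $\Gamma^0$ (and not on the dynamic part $\Gamma^1$) produces exactly the boundary term $\int_{\gamma\times(0,T)}v\,\pnu z\,\d S\,\d t$ in the weak formulation, and establish the hidden-regularity trace bound $\|\pnu z\|_{L^2(\gamma\times(0,T))}\leq C\|(z_0,z_{0,\Gamma},z_1,z_{1,\Gamma})\|_{\mathcal{E}\times\mathcal{H}}$ for \eqref{adjpb}. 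Once these are settled, the remaining HUM machinery is routine, which is why we only sketch the argument and refer to \cite{HIY20,Wa18} for the analogous details.
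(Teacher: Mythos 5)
Your proposal is correct and takes essentially the same route as the paper: the paper deduces exact controllability from the observability inequality of Theorem \ref{thmobs} by the standard duality argument, citing \cite[Corollary 1.2]{Wa18}, which is precisely the HUM/transposition machinery you spell out. The only difference is one of detail, as the paper leaves the functional setup (transposition solutions, hidden regularity, HUM functional) implicit in that citation rather than writing it out.
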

The above controllability result is sharp in the sense that only one control force on the Dirichlet portion is used, in contrast to \cite[Section 4]{Tebou2017}, where two controls are used. This fact has already been observed in a similar situation \cite{BT18}, when $\Omega=\displaystyle\prod\limits_{i=1}^{N} (0,l_{i}),\ l_{i} > 0$. We also refer to \cite{Ma14} when $\Omega$ is a polygon (or polyhedron).

\section{Open problems}\label{sec6}
This section is devoted to some relevant open problems that deserve further investigation.

\subsection*{Open problem 6.1}\label{op1}
In Theorems \ref{Thm:Carleman} and \ref{thm:stab}, we could absorb the term
$\displaystyle
s\lambda \int_{\Gamma^1_T} \varphi |\nabla_\Gamma y_\Gamma|^2\mathrm{d}S\mathrm{d}t    
$
thanks to the assumption $\delta > d$ (see Remark \ref{rmkddelta}). Given the results of \cite[Section 2.2]{BDEM22}, the assumption $\delta > d$ is more involved in such problems. Indeed, by Remark \ref{rmkobs}, a sharp Carleman estimate like \eqref{carleman} (i.e., without tangential and time derivatives on the right-hand side, in contrast to \cite[Theorem 2.2]{Tebou2017}) for the case $\delta < d$ is impossible. However, in the case $\delta=d$, a sharp observability inequality is still an open question.

\subsection*{Open problem 6.2}
All previous Carleman estimates for the wave equations with dynamic boundary conditions incorporate a Dirichlet boundary condition on one portion of the boundary. However, a Carleman estimate for the wave equation with dynamic boundary conditions on the whole boundary $\Gamma$ is still an open problem to be resolved. Namely, the following controllability problem 
\begin{align}
	\label{ctrlpbw}
	\begin{cases}
		\ptt y-d\Delta y + q_{\Omega}(x,t) y=0, &\text{ in } \Omega \times (0,T),\\
		\ptt y_\Gamma -\delta \Delta_\Gamma y_\Gamma + d\pnu y + q_{\Gamma}(x,t) y_\Gamma=\mathds{1}_{\Gamma_*} v, \quad y_\Gamma = y_{\mid_{\Gamma}}, &\text{ on }\Gamma\times (0,T) ,\\
		(y(\cdot,0),y_\Gamma (\cdot,0))=(y_0,y_{0,\Gamma}), \quad(\pt y(\cdot,0),\pt y_\Gamma (\cdot,0))=(y_1,y_{1,\Gamma}), &\text{ in }\Omega\times \Gamma,
	\end{cases}
\end{align}
assuming the existence of $x_0\notin \overline{\Omega}$ such that the control support satisfies $\Gamma_* \supset \{x\in \Gamma\colon (x-x_0)\cdot \nu(x) \ge 0\}$, is an open problem. The classical weight functions (see \eqref{weq}) that are employed for static boundary conditions are not well-adapted for dynamic boundary conditions. Intuitively, one needs a more appropriate weight function $\psi$ satisfying the pseudo-convexity property with respect to both differential operators in $\Omega$ and on $\Gamma$, i.e., there exists $\rho > 0$ such that 
\begin{equation}
    \nabla^2 \psi(\xi, \xi)(x) \geq 2\rho |\xi|^2  \text{ in }\overline{\Omega} \quad \text{ and } \quad \nabla^2_\Gamma \psi (\xi, \xi)(x) \geq 2\rho |\xi|^2 \text{ on }\Gamma\quad \text{ for all }\xi \in \mathbb{R}^n.
\end{equation}
We refer to \cite[Chapter 6]{BY17} for more details. Moreover, the designed weight function should absorb the tricky term
$\displaystyle
s\lambda \int_{\Gamma_{T}}{e^{2s\varphi} \varphi \pnu \psi |\pnu y|^2}\mathrm{d}S\mathrm{d}t,$
which causes a main difficulty in such problems. Such a weight function has not been considered before, up to our knowledge. This issue has recently been resolved in the parabolic case \cite{CGKM23} by regularity estimates. Thus, we believe that this open question deserves more attention, in contrast to \cite[Remark 3]{Tebou2017}.

\section*{Acknowledgment}
We would like to thank the anonymous referees for their invaluable comments and corrections that led to this improved version.

\bibliographystyle{siamplain}

\end{document}